\newtheorem{lemma}{Lemma}[section]
\newtheorem{prop}[lemma]{Proposition}
\newtheorem{cor}[lemma]{Corollary}
\newtheorem{thm}[lemma]{Theorem}
\newtheorem{example}[lemma]{Example}
\newtheorem{thm?}[lemma]{Theorem?}
\newtheorem{ques}[lemma]{Question}
\newtheorem{remark}[lemma]{Remark}
\newcommand{\F}{\mathbb{F}}
\newcommand{\ra}{\ensuremath{\rightarrow}}
\newcommand{\Z}{\mathbb{Z}}
\newcommand{\N}{\mathbb{N}}
\newcommand{\R}{\mathbb{R}}
\newcommand{\C}{\mathbb{C}}
\newcommand{\Q}{\mathbb{Q}}
\newcommand{\Aut}{\operatorname{Aut}}
\newcommand{\GL}{\operatorname{GL}}
\newcommand{\tors}{\operatorname{tors}}
\newcommand{\disc}{\operatorname{disc}}
\renewcommand{\dim}{\operatorname{dim}}
\newcommand{\ord}{\operatorname{ord}}
\newcommand{\Pic}{\operatorname{Pic}}
\newcommand{\Frac}{\operatorname{Frac}}
\newcommand{\Spec}{\operatorname{Spec}}
\newcommand{\MaxSpec}{\operatorname{MaxSpec}}
\newcommand{\pp}{\mathfrak{p}}
\newcommand{\mm}{\mathfrak{m}}
\newcommand{\car}{\operatorname{char}}
\newcommand{\Vol}{\operatorname{Vol}}
\newcommand{\Covol}{\operatorname{Covol}}
\newcommand{\covol}{\operatorname{covol}}
\newcommand{\Disc}{\operatorname{Disc}}
\newcommand{\hatK}{\widehat{K}}
\newcommand{\hatR}{\widehat{R}}
\newcommand{\Ker}{\operatorname{Ker}}
\newcommand{\B}{\mathfrak{B}}
\renewcommand{\div}{\operatorname{div}}
\renewcommand{\P}{\mathcal{P}}
\renewcommand{\deg}{\operatorname{deg}}
\title{Abstract Geometry of Numbers: Linear Forms}
\author{Pete L. Clark}
\address{Department of Mathematics \\ Boyd Graduate Studies Research Center \\ University of Georgia \\ Athens, GA 30602-7403 \\ USA}
\email{pete@math.uga.edu}
\begin{document}
\maketitle

\begin{abstract}
This paper concerns the \textbf{abstract geometry of numbers}: namely the pursuit of certain aspects of geometry of 
numbers over a suitable class of normed domains.  (The standard geometry of 
numbers is then viewed as geometry of numbers over $\Z$ endowed with its standard absolute value.)  In this work we study normed domains of ``linear type'', in which an 
analogue of Minkowski's linear forms theorem holds.  We show that $S$-integer 
rings in number fields and coordinate rings of (nice) affine algebraic curves 
over an arbitrary ground field are of linear type.  The theory is applied to 
quadratic forms in two ways, yielding a Nullstellensatz and a Small Multiple Theorem.
\end{abstract}

\tableofcontents

\section*{Introduction}
\noindent
In a previous paper \cite{ADCI}, the author studied aspects of the theory of quadratic forms over a normed domain $(R,|\cdot|)$.  In particular the notion of a \textbf{Euclidean quadratic form} uses the norm structure in a key way and gives rise to some results of a geometry of numbers (GoN) flavor but in a more abstract algebraic context.  Together with some work applying very elementary GoN to prove representation theorems for integral quadratic forms 
\cite{GoN1}, \cite{GoN2}, \cite{GoN3} this has led me to pursue aspects of 
GoN over normed integral domains: in short an \textbf{abstract GoN}.
\\ \\
The main idea of the present paper is to pursue analogues of Minkowski's Linear Forms Theorem in a normed domain.  In the first part of the paper we develop a theory of normed domains of \textbf{linear type} -- i.e., in which an analogue of Minkowski's Theorem holds.  We show that the domains of most arithmetic interest -- namely, $S$-integer rings in number fields and coordinate rings of affine algebraic curves over an arbitrary ground field -- 
are of linear type.  There is also a quantitative aspect to theory in which we 
ask for the best constant in Minkowski's Theorem, which leads us to define the \textbf{linear constants} $C(R,n)$ of a linear type normed domain.  We provide explicit lower bounds on linear constants for the above kinds of domains.  In simple cases -- $\Z$ and $k[t]$ -- we can show that our lower bounds are sharp, but in most cases the precise determination of the linear constants remains open.  
\\ \\
In the second part of the paper this theory is applied to prove two kinds of 
results for quadratic forms over a normed domain: the Nullstellensatz for isotropic forms and the Small Multiple Theorem for anisotropic forms. \\ \indent 
Given an isotropic quadratic form over a normed domain, it is natural to ask for an upper bound on the size of an isotropic vector 
in terms of the size of the coefficients of the form.  To be sure, there is room for interpretation in the precise meaning of ``the size''.  There are beautiful classical results due in the case $R = \Z$ to J.W.S. Cassels \cite{Cassels55} and in the case $R = k[t]$ to A. Prestel \cite{Prestel87}.  Cassels's result was generalized to $R = \Z_K$ for any number field $K$ by S. Raghavan \cite{Raghavan75}, and Prestel's result was generalized to the coordinate ring of any nonsingular integral affine curve over an arbitrary field by A. Pfister 
\cite{Pfister97}.  Our Nullstellensatz is an abstract version of these theorems: it holds over a suitable linear type normed Dedekind domain In particular we recover the results of Cassels and Prestel but gives variants of the results of Raghavan and Pfister because our measurement of ``the size'' agrees with theirs only when there is a single infinite place.  The Nullstellensatz also applies to $S$-integer rings in number fields, which is a new result.  
\\ \indent
The Small Multiple Theorem holds for certain anisotropic quadratic forms over a suitable linear type normed Dedekind domain.  This is new even over $\Z$, though it has precedent in work of Brauer-Reynolds \cite{Brauer-Reynolds} and Mordell \cite{Mordell51}.  This result opens up an enormous terrain in which one may \emph{try} to apply the --  somewhat mysterious, but usually effective -- computational methods of \cite{GoN1}, \cite{GoN2}, \cite{GoN3}.   
\\ \\
Both instances of ``suitable'' above mean the same thing.  To adapt the arguments over $\Z$ and $k[t]$ one wants the norm to satisfy the triangle inequality, which is unfortunately \emph{not} implied by the formalism of normed domains.  In the case of an $S$-integer ring $R$, the norm satisfies the triangle inequality only when $R = \Z_K$ wih $K = \Q$ or an imaginary quadratic field.  This is a disappointing limitation.  For affine domains, the restricting to one infinite place is considerably less limiting, but it is still not the general case.  Raghavan and Pfister prove results which go beyond these hypotheses, but each of their results involves switching to a different measurement of size: e.g. Raghavan takes as his ``norm'' the maximum of the absolute values at the infinite places: this satisfies the triangle inequality but is only submultiplicative: $|xy| \leq |x||y|$.  There are so many signs that our norm is a natural one: $\S$ 1 of the present work is a rumination on this point -- that the failure of the triangle inequality in so many examples of interest was most distressing. \\ \indent 
Only late in the course of this work did a solution emerge: one can refine the definintion of linear constant so that in both of the above applications one can apply the triangle inequality separately to each of the metric factors of the norm.  This leads to \textbf{multinormed linear constants}.  As someone who had come to regard the triangle inequality as his mortal enemy, it is hard to express how wonderful this refinement seems to me, but I readily admit that it adds an extra layer of complexity.  On a first pass the reader may wish to restrict to the case of one infinite place.
\\ \\
Our approach is a perhaps amusing blend of high and low.  On the algebraic side we work in the context of not necessarily free lattices over an arbitrary Dedekind domain.  This necessitates some background algebra, to which $\S$ 1 is mostly devoted.  However, on the GoN side we work mostly from scratch: in some cases -- e.g. $R = \Z$, $R = \F_q[t]$ -- the Pigeonhole Principle is sufficient.  It is a piece of folklore that the Blichfeldt Lemma (which implies Minkowski's Convex Body Theorem) is a sort of ``Measure Theoretic Pigeonhole Principle.''  We literally give a Measure Theoretic Pigeonhole Principle and use it to deduce an Blichfeldt Theorem 
in a measured group.  This, together with the observation that an $S$-integer ring in a number field is discrete and cocompact in a suitable finite product 
of its completions, is the outer limit of our sophistication: we 
do not (yet!) need reduction theory, adeles, height functions...In the function field case, \emph{in lieu} of using a fully fledged GoN as Mahler, Eichler, and others have developed, we simply invoke the Riemann-Roch Theorem.
\\ \\
I expect that for all Hasse domains and affine domains, the Hermite constants \[ \gamma(n,R) = \sup_{\text{quadratic forms } f \in K[t_1,\ldots,t_n]}  \inf_{x \in (R^n)^{\bullet} }\frac{|f(x)|}{|\disc(f)|^{\frac{1}{n}}}; \] 
should be finite.  This is known for positive forms over totally real number fields \cite{Icaza97}, and the general case is work in progress of J. Hicks and the author.  If $R = \Z$ is any indication, such work should yield a ``Smaller Multiple Theorem'' which is ultimately to be preferred in applications like \cite{GoN1}, \cite{GoN2}, \cite{GoN3}.  However, I have finally followed the advice that it is bad luck to title a paper ``Part I''.  

\section{Normed Dedekind Domains}

\subsection{Elementwise Norms} \textbf{} \\ \\ \noindent 
A \textbf{norm} on a ring $R$ is a function $|\cdot |: R \ra \R^{\geq 0}$ such that \\
(N0) $|x| = 0 \iff x = 0$, \\
(N1) $|x| \geq 1$ for all $x \in R^{\bullet}$; $|x| = 1 \iff x \in R^{\times}$, \\
(N2) $\forall x,y \in R$, $|xy| = |x||y|$.
  \\ \\
A \textbf{normed ring} is a pair $(R,| \cdot |)$ where $| \cdot |$ is a norm on $R$.  A nonzero ring admitting a norm is necessarily a domain.  We denote the fraction field by $K$.  The norm extends uniquely to a homomorphism of groups $(K^{\times},\cdot) \rightarrow (\R^{>0},\cdot)$.

\begin{remark} In \cite{ADCI} our norm functions were required to take values in $\N$.  This is a natural condition but one which is not needed in the present work.
\end{remark}
\noindent
Let $R$ be a domain with fraction field $K$.  We say norms $| \cdot |_1, \cdot | \cdot |_2$ on $R$ are \textbf{equivalent} -- and write $| \cdot |_1 \sim | \cdot |_2$ -- if for all $x \in K$, $|x|_1 < 1 \iff |x|_2 < 1$.  
\\ \\
Elementwise norms are especially easy to understand on a UFD.  Indeed, to 
define an elementwise norm on a UFD one needs to assign to each nonzero principal prime ideal $(\pi)$ of $R$ an integer $a_{\pi} \geq 2$, and any 
such assignment yields an elementwise norm.  In particular a DVR 
carries a unique equivalence class of norms. 
\\ \\
The \textbf{norm group} $\mathcal{N}$ is $|K^{\times}| \subset \R^{> 0}$.  So long as $R \neq K$, its closure $\overline{\mathcal{N}}$ is a nontrivial closed subgroup of $\R^{> 0}$, hence there are just two possibilities: either \\
(i) $\overline{\mathcal{N}} = \R^{> 0}$; we say that $R$ is \textbf{densely normed}, or \\
(ii) $\mathcal{N} \subset q^{\Z}$ for some $q > 1$; we say that $R$ is \textbf{q-normed}.\footnote{Thus $\mathcal{N} = (\tilde{q})^{\Z}$ for some $\tilde{q} = q^a$, $a \in \Z^+$, so the class of $q$-normed rings would have been the same if we had required $\mathcal{N} = q^{\Z}$.  However, we will see that stating it this way is natural for our applications to coordinate rings of affine curves, since an affine algebraic curve over a non-algebraically closed field $k$ need not have any $k$-rational points.} \\ In the $a$-normed case we will find it more convenient to work with \[\deg(\cdot) = \log_q |\cdot |.\] 
The corresponding axioms are: for all $x,y \in R$,
\\ 
(N$_q$0) $\deg x = -\infty$ iff $x = 0$; \\
(N$_q$1) If $x \in R^{\bullet}$, $\deg x \in \N$; $\deg x = 0 \iff x \in R^{\times}$; \\
(N$_q$2) $\forall x,y \in R$, $\deg xy = \deg x + \deg y$.  

\begin{remark}
The function $\deg$ is independent of the equivalence class of the norm.
\end{remark}

\subsection{Ideal norms} \textbf{} \\ \\ \noindent
Let $R$ be a domain.  Then the nonzero ideals of $R$ form a monoid under multiplication, say $\mathcal{I}^+(R)$.  An \textbf{ideal norm} on $R$ is a 
homomorphism of monoids $|\cdot|: \mathcal{I}^+(R) \ra \R^{\geq 1}$ such that 
$|I| = 1 \iff I = R$.  An ideal norm extends to uniquely to a homomorphism 
from the monoid $\mathcal{I}(R)$ of fractional ideals of $R$ to $\R^{> 0}$.  
\\ \\
Ideal norms are especially easy to understand on a Dedekind domain.  Indeed, to 
define an ideal norm on a Dedekind domain one needs to assign to each nonzero prime ideal $\pp$ of $R$ an integer $a_{\pp} \geq 2$, and any 
such assignment yields an ideal norm.  Further, $\mathcal{I}(R)$ is a group 
iff $R$ is Dedekind \cite[Thm. 11.6]{Matsumura}.  
\\ \\
In the present work $R$ will always be a Dedekind domain, and a \emph{normed ring} $(R,|\cdot|)$ means a Dedekind domain 
endowed with an \emph{ideal norm}.

\subsection{Overrings}
\textbf{} \\ \\ \noindent
Let $(R,|\cdot|)$ be a normed Dedekind domain, and let $R'$ be 
an \textbf{overring} of $R$, i.e., a ring intermediate between $R$ and its 
fraction field $K$.  The induced map on spectra $\iota^*: \Spec R' \ra \Spec R$ is an injection, and $R'$ is completely determined by the image $W := \iota^*(\Spec R')$.  Namely \cite[Cor. 6.12]{Larsen-McCarthy}  \[R' = R_W := \bigcap_{\mathfrak{p} \in W} R_{\mathfrak{p}}. \]
This allows us to identify the monoid $\mathcal{I}^+(R_W)$ of ideals of $R_W$ as the free submonoid of the free monoid $\mathcal{I}^+(R)$ on the subset $W$ of $\Spec R$ and thus define an \textbf{overring ideal norm} $| \cdot |_W$ 
on $R_W$ as the composite map 
\[\mathcal{I}^+(R_W) \stackrel{\iota^*}{\ra} \mathcal{I}^+(R) \stackrel{| \cdot |}{\ra} \Z^+. \]  We single out the following properties of $| \cdot |_W$: \\ \\
$\bullet$ Every ideal $I \in \mathcal{R}$ may be uniquely decomposed as 
$W_I I'$ where $W_I$ is divisible by the primes of $W$ and $I'$ is prime to 
$W$, and we have

\[ |I|_W = |W_I I'|_S = |I'|_S = |I'|. \]
$\bullet$ For all ideals $I$, $|I|_W \leq |I|$.  


\subsection{Extended Norms}
\textbf{} \\ \\ \noindent
Let $(R,|\cdot|)$ be a normed Dedekind domain with fraction field $K$.  Let $L/K$ be a finite field extension, and let $S$ be the integral closure of 
$R$ in $L$.  Then $S$ is a Dedekind domain \cite[Thm. 11.7]{Matsumura}.  Let $N_{L/K}: L \ra K$ be the norm in the sense of field theory.  Since $R$ is integrally closed, $N_{L/K}(S) \subset R$.  The composite map 

\[ | \cdot|_S = | \cdot| \circ N_{L/K}: S \ra \R^{> 1} \]
is a norm function on $S$.  We call it the \textbf{extended norm}.  

\subsection{Almost Metric Norms and the Artin Constant}
\textbf{} \\ \\ \noindent
Let $|\cdot|$ be a norm on a ring $R$.  Define 
\[ A(R) = \inf \{ C \in \R^{>0} \ | \ \forall x,y \in R, \ |x+y| \leq C \max (|x|, |y|)\}. \]
If there is no such $C$, then $A(R) = \inf \varnothing = \infty$.  If $A(R) < \infty$ we say that the norm is \textbf{almost metric} and call $A(R)$ the \textbf{Artin constant}.   It follows that for all $x,y \in K$, $|x+y| \leq A(R) \max(|x|,|y|)$, and thus $|\cdot|$ is an absolute value on $K$ in the sense 
of E. Artin.  \\ \indent
When $A(R) = 1$ we say the norm is \textbf{non-Archimedean} or \textbf{ultrametric}.

\begin{lemma}
\label{ARTINLEMMA}
Let $R$ be a domain with fraction field $K$, and let $|\cdot|$ be an 
almost metric norm on $R$ with Artin constant $A(R)$.  \\
a) $A(R) = \max( |1|, |2|)$.  \\
b) For $\alpha \in \R^{>0}$, $A(R,|\cdot|^{\alpha}) = A(R)^{\alpha}$.  \\
c) The map $(x,y) \mapsto |x-y|$ is a metric on $K$ iff $A(R) \leq 2$. \\
d) For $x_1,\ldots,x_n \in K$, $|x_1 + \ldots + x_n| \leq |n| \max_i |x_i|$. 
\end{lemma}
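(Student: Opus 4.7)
The plan is to build (a) from the classical Artin binomial trick, obtain (b) by direct bookkeeping, and then use (a), (b), and an Ostrowski-style identification to prove (c) and (d).

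For (a), the lower bound $A(R) \geq \max(|1|, |2|)$ drops out of the defining inequality at $(x,y) = (1,1)$ and $(x,y) = (1,0)$, noting that $|1| = 1$. For the upper bound, multiplicativity reduces the problem to showing $|1 + z| \leq \max(|1|,|2|)$ for all $z \in K$ with $|z| \leq 1$, since $|x+y| = |x| \cdot |1 + y/x|$ whenever $|x| = \max(|x|, |y|)$. The plan is the Artin argument: expand $(1+z)^n$ by the binomial theorem, iterate the almost-metric inequality (which absorbs a sum of $m$ terms into a prefactor $A(R)^{\lceil \log_2 m \rceil}$ times a single maximum), and further bound each $|\binom{n}{k}|$ by writing $\binom{n}{k} < 2^{n+1}$ in binary and applying the same iteration a second time. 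This produces
\[
|1+z|^n \leq A(R)^{O(\log n)} \cdot \max(|1|, |2|)^n,
\]
and taking $n$-th roots as $n \to \infty$ eliminates the subexponential prefactor. Part (b) follows by direct substitution: $|x+y|^\alpha \leq C \max(|x|,|y|)^\alpha$ is equivalent to $|x+y| \leq C^{1/\alpha} \max(|x|, |y|)$, so the infimum transforms as $A(R, |\cdot|^\alpha) = A(R)^\alpha$.

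For (c), the forward direction is $|a+b| = |a - (-b)| \leq |a| + |b| \leq 2 \max(|a|,|b|)$, using $|-1| = 1$ together with the triangle inequality supplied by the metric hypothesis. For the converse, assume $A(R) \leq 2$, so $|2| \leq 2$ by (a). The key preliminary is $|m| \leq m$ on positive integers: applying the Artin iteration to the binary expansion of $m^N$ gives $|m|^N = |m^N| \leq 2^{O(\log N)} \cdot 2^{N \log_2 m}$, and the $N$-th root yields $|m| \leq m$. Then expanding $(a+b)^n$ binomially, using $|\binom{n}{k}| \leq \binom{n}{k}$, and comparing to the identity $(|a|+|b|)^n = \sum_k \binom{n}{k} |a|^k |b|^{n-k}$ produces $|a+b|^n \leq 2^{O(\log n)} (|a|+|b|)^n$, from which the triangle inequality drops out after taking the $n$-th root.

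For (d), the plan is rescaling. In the non-ultrametric case, set $\alpha = \log 2 / \log A(R)$ so that by (b), $|\cdot|^\alpha$ has Artin constant $2$, and by (c) satisfies the triangle inequality; then $|x_1 + \ldots + x_n|^\alpha \leq n (\max_i |x_i|)^\alpha$ yields $|x_1 + \ldots + x_n| \leq A(R)^{\log_2 n} \max_i |x_i|$. The main obstacle is identifying $A(R)^{\log_2 n}$ with $|n|$, which I would handle via Ostrowski's theorem applied to $|\cdot|^\alpha|_{\Q}$: the $p$-adic case is excluded by (N1) (which forces $|p| \geq 1$ for every rational prime $p \in R^\bullet$), so the restriction must be a power of the standard absolute value, and the normalization $|2|^\alpha = 2$ pins that power down as the standard absolute value itself. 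Hence $|n|^\alpha = n$, giving $|n| = A(R)^{\log_2 n}$ on positive integers nonzero in $R$. Positive-characteristic corner cases where $|n|$ vanishes would need to be handled as a separate (ultrametric) sub-case.
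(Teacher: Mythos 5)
Your proof is correct and follows essentially the same route the paper takes: for (a) and (c) the paper simply cites Artin's book, and the argument you sketch (binomial expansion, iterated Artin inequality, and an $n$-th root limit) is precisely Artin's; for (d) both you and the paper rescale so that the Artin constant becomes $2$ and then invoke Ostrowski's theorem to identify $|n|=n$ before applying the triangle inequality from (c). One detail worth making explicit so that your ``positive-characteristic corner case'' in (d) is fully resolved: if $\car K = p > 0$ then multiplicativity forces $|\cdot|$ to be trivial on the prime field, so $|2| \le 1$ and hence $A(R)=1$ by part (a); therefore the Archimedean case of (d) already forces $\car K = 0$ (making your restriction $|\cdot|^{\alpha}|_{\Q}$ well-defined), and the positive-characteristic sub-case collapses into the ultrametric case that both you and the paper dispatch immediately by induction on $|x+y| \le \max(|x|,|y|)$.
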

\begin{proof}
For part a), see \cite[p. 16]{Artin}.  Part b) follows immediately.  For part c), see \cite[pp. 4-5]{Artin}.  As for part d): the non-Archmidean case is 
immediate from induction on $|x+y| \leq \max |x|,|y|$.  For the Archimedean 
case: the assertion depends only on the equivalence class of the norm, 
so by scaling we may assume $A(R) =2$.  When the Artin 
constant is $2$, then by Ostrowski's Theorem \cite[p. 24]{Artin}, the absolute value on $K$ 
is obtained by embedding $K$ into $\C$ and restricting the standard Euclidean 
norm.  In particular $|n| = n$ for all $n \in \Z^+$.  Then by induction 
on part c), 
\[ |x_1+ \ldots + x_n| \leq |x_1| + \ldots + |x_n| \leq n \max_i |x_i| = |n| \max_i |x_i|. \]
\end{proof}
\noindent
In particular an almost metric norm is equivalent to a metric norm, and in the sequel we usually renormalize almost metric norms to be metric.

\begin{lemma}
\label{1.4}
Let $|\cdot|$ be a $q$-norm on $R$.  The following are equivalent: \\
(i) $-\deg$ is a discrete valuation on $K$.  \\
(ii) The norm $|\cdot|$ is ultrametric. \\
(iii) The norm $|\cdot|$ is metric. \\
(iv) The norm $|\cdot|$ is almost metric.
\end{lemma}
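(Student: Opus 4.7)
The plan is to prove the four-way equivalence by the cycle (i) $\Leftrightarrow$ (ii), (ii) $\Rightarrow$ (iii) $\Rightarrow$ (iv) $\Rightarrow$ (ii). The equivalence of (i) and (ii) is essentially a translation under $\deg = \log_q|\cdot|$: the ultrametric inequality $|x+y| \leq \max(|x|,|y|)$ becomes the non-Archimedean valuation axiom $-\deg(x+y) \geq \min(-\deg x, -\deg y)$, while multiplicativity $-\deg(xy) = -\deg x + (-\deg y)$ comes from (N$_q$2) and the vanishing axiom from (N$_q$0); discreteness of the value group is automatic since $-\deg$ takes integer values. The implications (ii) $\Rightarrow$ (iii) $\Rightarrow$ (iv) are immediate: ultrametric implies metric via $\max(|x|,|y|) \leq |x| + |y|$, while metric implies almost metric with Artin constant at most $2$.

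The substantive step is (iv) $\Rightarrow$ (ii). By Lemma \ref{ARTINLEMMA}(a), $A(R) = \max(|1|,|2|) = \max(1,|2|)$, and since $A(R) \geq 1$ automatically (take $x=1$, $y=0$), it suffices to show $|2| \leq 1$. The argument splits on the characteristic of $R$. In positive characteristic $p$, the prime subring $\F_p$ is finite, so the subgroup $|\F_p^\times|$ of $\R^{>0}$ must be trivial: if $p = 2$ then $|2| = |0| = 0$, while if $p > 2$ then $2 \in \F_p^\times$ forces $|2| = 1$; either way $A(R) = 1$. In characteristic zero, $\Q \hookrightarrow K$, and the almost-metric norm is an absolute value on $K$ in the sense of Artin (after rescaling via Lemma \ref{ARTINLEMMA}(b) one may assume $A(R) \leq 2$, so by Lemma \ref{ARTINLEMMA}(c) the norm is genuinely metric and Ostrowski's theorem applies); I would invoke Ostrowski's classification of absolute values on $\Q$: the restriction to $\Q$ must be trivial, $p$-adic, or Archimedean. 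The first two cases give $|n| \leq 1$ for all $n \in \Z$ and in particular $|2| \leq 1$; the Archimedean case must be excluded.

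The main obstacle is excluding the Archimedean case for the restriction to $\Q$, and this is exactly where the $q$-normed hypothesis enters. In that case one would have $|p| = p^\beta$ for every rational prime with a fixed $\beta > 0$; but $\log|2| = \beta \log 2$ and $\log|3| = \beta \log 3$ are $\Q$-linearly independent (since $\log_3 2$ is irrational), so the additive subgroup of $\R$ generated by $\log|\Q^\times|$ is non-cyclic, hence dense, forcing $|\Q^\times| \subseteq |K^\times|$ to be dense in $\R^{>0}$. This contradicts the inclusion $|K^\times| \subseteq q^\Z$ into a discrete subgroup of $\R^{>0}$. Therefore the restriction is non-Archimedean, $|2| \leq 1$, and $A(R) = 1$, completing (iv) $\Rightarrow$ (ii).
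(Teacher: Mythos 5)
Your proof is correct and takes essentially the same route as the paper's. For the substantive implication (iv) $\Rightarrow$ (ii) the paper rescales to Artin constant $2$ and applies the Ostrowski theorem of Lemma \ref{ARTINLEMMA}(d) to get $|n| = n$ for all $n \in \Z^+$, so that $\deg 3 / \deg 2 = \log_2 3$ would be rational, a contradiction; you instead restrict to $\Q$ and invoke Ostrowski's classification of absolute values on $\Q$, ruling out the Archimedean case via the same irrationality, packaged as density of $|\Q^\times|$ against the discreteness of $q^{\Z}$. Your separate positive-characteristic case is fine but redundant, since the Ostrowski step already forces characteristic zero (an Archimedean Artin absolute value lives only on subfields of $\C$), which is why the paper does not isolate it.
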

\begin{proof}
The implications (i) $\iff$ (ii) $\implies$ (iii) $\implies$ (iv) are all immediate.  Assume (iv).  We may adjust the norm within its equivalence class without affecting its ultrametricity, so seeking a contradiction we may suppose that $|\cdot|$ is \emph{not} ultrametric but that it is metric with Artin constant $2$, and thus for all $n \in \Z^+$, $n = |n| = q^{\deg n}$, i.e., 
$\deg n = \log_q n$.  But this implies $\frac{\log_q 3}{\log_q 2} = \log_2 3\in \Q$, a contradiction.  
\end{proof}
\begin{example} Let $R$ be a discrete valuation ring with uniformizer $\pi$.  Then a norm $|\cdot|$ on $R$ is freely determined by mapping $\pi$ to any $q > 1$.  Thus the norms on $R$ lie in a single equivalence class, and they are all $q$-norms.  The equivalent conditions of Lemma \ref{1.4} \emph{do not hold}: we 
have $|1| = |\pi-1| = 1$, but $|\pi| > \max |1|,|\pi-1|$.  
\end{example}

\begin{lemma}
\label{LEMMA3}
Let $m \geq 2$, and let $|\cdot|_1,\ldots,|\cdot|_m$ be inequivalent absolute values on a 
ring $K$.  Suppose that at least one of the following holds: \\
(i) $(K,|\cdot|_2)$ is densely normed.   \\
(ii) There is $\alpha > 1$ such that $\alpha \in \bigcap_{i=1}^m |K^{\times}|_i$.  \\
Define $|\cdot|: K \ra \R$ by $|x| = \prod_{i=1}^m |x|_i$.  
Then $|\cdot|$ is \emph{not} an absolute value on $K$.
\end{lemma}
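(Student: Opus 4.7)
The plan is to show that $|\cdot| = \prod_{i=1}^m |\cdot|_i$ fails the almost-metric property, hence is not an absolute value in Artin's sense. Specifically, I will produce a single $b \in K^{\times}$ with $|b|_1 < 1$ and $|b|_i > 1$ for all $i = 2, \ldots, m$, and then show that for $x_N = 1$, $y_N = b^N$, the ratio $|x_N + y_N|/\max(|x_N|, |y_N|)$ is unbounded as $N \to \infty$, contradicting the existence of any Artin constant for $|\cdot|$.

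Once $b$ is in hand, the asymptotic calculation is direct. After rescaling each $|\cdot|_i$ within its equivalence class to be metric, as permitted by Lemma~\ref{ARTINLEMMA}(c), the reverse triangle inequality yields $|1+b^N|_1 \to 1$ (since $|b|_1 < 1$) and $|1+b^N|_i = \Theta(|b|_i^N)$ for $i \geq 2$ (since $|b|_i > 1$). Hence $|x_N + y_N| = \Theta\!\left(\prod_{i=2}^m |b|_i^N\right)$, while $\max(|x_N|, |y_N|) = \max(1, |b|^N)$ with $|b| = \prod_{i=1}^m |b|_i$. Two cases: if $|b| \leq 1$ the maximum equals $1$ and the ratio grows like $\prod_{i \geq 2} |b|_i^N$ (each factor $> 1$); if $|b| > 1$ the maximum equals $|b|^N$ and the ratio grows like $|b|_1^{-N}$ (with $|b|_1 < 1$). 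Either way the ratio diverges, contradicting almost-metricity.

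The main obstacle is producing $b$ with the prescribed size at every place. Pairwise inequivalence of $|\cdot|_1$ with each $|\cdot|_j$ yields $b_j \in K^{\times}$ with $|b_j|_1 < 1$ and $|b_j|_j \geq 1$, but one must upgrade $\geq 1$ to $> 1$. This is precisely where the hypothesis enters: under (ii), the common value $\alpha > 1$ lying in $|K^{\times}|_j$ gives an element $d_j$ with $|d_j|_j = \alpha$ strictly above $1$, and replacing $b_j$ by $b_j^N d_j$ for $N$ large preserves $|\cdot|_1 < 1$ while forcing $|\cdot|_j > 1$; under (i), density of $|K^{\times}|_2$ supplies the strengthening at $j = 2$, with the remaining places handled by nontriviality. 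The individual $b_j$'s are then combined into a single $b$ via the Artin--Whaples weak approximation theorem, applicable because the $|\cdot|_i$ are pairwise inequivalent nontrivial absolute values.
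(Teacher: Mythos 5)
Your proof is correct and its skeleton is sound, but it takes a genuinely different route from the paper's. The paper invokes Artin--Whaples \emph{for each} $n$ to manufacture a pair $x_n, y_n$ whose norms are balanced at every place: $|x_n|_1 \sim \alpha^n$, $|x_n|_2 \sim \alpha^{-n}$, $|x_n|_k \sim 1$ for $k\geq 3$, and the same for $y_n$ with places $1$ and $2$ swapped. This makes $|x_n|, |y_n| \sim 1$ while $|x_n+y_n| \sim \alpha^{2n} \to \infty$, killing any Artin constant in one stroke with no case analysis. You instead invoke Artin--Whaples \emph{once} to produce a single element $b$ with $|b|_1<1$ and $|b|_j>1$ for $j\geq 2$, then examine $1$ and $b^N$ as $N\to\infty$. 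Because $|b| = \prod_i|b|_i$ may be $\leq 1$ or $>1$, you pay for this economy with a two-case analysis of $\max(1,|b|^N)$. Both routes are correct; the paper's choice of targets makes the final inequality manifest, while yours reuses a single witness at the cost of bookkeeping.

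Two small points deserve attention. First, the opening ``rescale each $|\cdot|_i$ to be metric'' should be handled carefully: replacing $|\cdot|_i$ by $|\cdot|_i^{c_i}$ with distinct exponents $c_i$ alters the product $\prod_i|\cdot|_i$ to a function that is not a power of the original, so the conclusion for the rescaled product does not formally transfer to $|\cdot|$. Fortunately you don't need the rescaling at all: for any almost-metric $|\cdot|_i$ with Artin constant $A_i$, if $|b|_i < 1$ then $A_i^{-1} \leq |1+b^N|_i \leq A_i$ for $N$ large, and if $|b|_i > 1$ then $A_i^{-1}|b|_i^N \leq |1+b^N|_i \leq A_i|b|_i^N$; these are exactly the $\Theta$-estimates you use, with constants depending on $A_i$, so the argument survives unchanged. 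Second, your phrasing suggests that hypotheses (i)/(ii) are what upgrade the weak $|b_j|_j\geq 1$ obtained from inequivalence to a strict $|b_j|_j>1$; more precisely, the upgrade only needs each $|\cdot|_j$ to be nontrivial (a silent hypothesis the paper also uses, both for Artin--Whaples and to extract $\alpha\in|K^\times|_1$). In the paper's proof, (i) and (ii) serve a slightly different purpose: they guarantee that the target values $\alpha^{\pm n}$ are achievable (exactly under (ii), or arbitrarily well under (i)) in the norm groups where they are requested, which is what makes the approximation step go through.
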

\begin{proof}
First suppose that (i) holds; let $\alpha > 1$ be an element of $|K^{\times}|_1$.  Fix $n \in \Z^+$.  By Artin-Whaples approximation \cite[p. 9]{Artin} there are $x_n,y_n \in K$ such that 
\[ |x_n|_1 \sim \alpha^n, \ |x_n|_2 \sim \alpha^{-n}, \ |x_n|_k \sim 1 \ \forall k \geq 3. \] 
\[ |y_n|_1 \sim \alpha^{-n}, \ |y_n|_2 \sim \alpha^n, \ |y_n|_k \sim 1 \ \forall k \geq 3. \]
Then $|x_n|, |y_n| \sim 1$, but $\lim_{n \ra \infty} |x_n+y_n| = \infty$, 
so $|\cdot|$ is not an absolute value.  If (ii) holds then the same argument works with $\alpha$ as in the statement of (ii).
\end{proof}

\subsection{Finite Quotient Domains} \textbf{} \\ \\ \noindent
A \textbf{finite quotient domain} is a domain $R$ such that for all nonzero 
ideals $I$ of $R$, $R/I$ is finite \cite{Butts-Wade}, \cite{Chew-Lawn70}, \cite{Levitz-Mott72}.  A finite quotient domain is Noetherian of Krull dimension at most one, hence is Dedekind if and only if 
it is integrally closed.

\begin{prop}
Let $R$ be a finite quotient domain with fraction field $K$. \\
a) Let $L/K$ be a finite extension, and let $S$ be a ring with $R \subset S \subset L$.  Then $S$ is a finite quotient domain.  \\
b) The integral closure $\tilde{R}$ of $R$ in $K$ is a finite quotient domain.  \\
c) The completion of $R$ at a maximal ideal is a finite 
quotient domain. 
\end{prop}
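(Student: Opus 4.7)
The plan is to derive part (a) from the Krull--Akizuki theorem, to deduce (b) as a special case of (a), and to establish (c) by direct computation in the completion. For (a), the finite quotient hypothesis forces $R$ to be a $1$-dimensional Noetherian domain, as recorded just above the proposition. Krull--Akizuki then asserts that any subring $S$ of $L$ containing $R$ is itself Noetherian of dimension $\leq 1$, and that for every nonzero ideal $J$ of $S$ the quotient $S/J$ has finite length as an $R$-module. The simple $R$-module composition factors are of the form $R/\mathfrak{p}$ for nonzero (hence maximal) primes $\mathfrak{p}$, each finite by the finite quotient hypothesis. Multiplying cardinalities along a composition series yields $|S/J| < \infty$. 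Seen concretely: for any nonzero $s \in J$, a minimal polynomial relation over $K$ produces a nonzero $a_0 \in sS \cap R \subseteq J$, so that $S/J$ is a quotient of $S/a_0 S$, and the key lemma of Krull--Akizuki---that $S/aS$ is finitely generated over the finite ring $R/aR$ for any nonzero $a \in R$---makes finiteness immediate.

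Part (b) is the case $L = K$, $S = \tilde{R}$ of part (a). For (c), fix a nonzero maximal ideal $\mathfrak{m}$; the case of $R$ a field is trivial. The finite quotient hypothesis makes each $R/\mathfrak{m}^n$ finite, so $\hat{R}_{\mathfrak{m}} = \varprojlim_n R/\mathfrak{m}^n$ is a Noetherian local ring with $\hat{R}_{\mathfrak{m}}/\hat{\mathfrak{m}}^n \cong R/\mathfrak{m}^n$ finite for each $n$. Under the Dedekind hypothesis in force throughout the paper, $R_{\mathfrak{m}}$ is a DVR, $\hat{R}_{\mathfrak{m}}$ is a complete DVR and hence a domain, and every nonzero ideal has the form $\hat{\mathfrak{m}}^n$; so every quotient is finite.

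The main obstacle is the appeal to Krull--Akizuki in (a): without it, the fact that $S$ need not be finitely generated as an $R$-module---take $R = \Z$, $S = \Z[1/p]$---blocks any naive reduction of finiteness of $S/aS$ to finiteness of $R/aR$. Part (c) also relies on the Dedekind hypothesis in an essential way: over a general $1$-dimensional Noetherian local domain, the completion can fail to be a domain (for instance at the node of a nodal plane curve over a finite field), so one genuinely needs $R_{\mathfrak{m}}$ to be a DVR rather than merely a $1$-dimensional Noetherian local finite quotient domain.
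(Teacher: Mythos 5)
The paper disposes of this proposition by citation: part (a) is attributed to Levitz--Mott, part (b) is noted to be a special case, and part (c) to follow from part (a) together with a corollary of Chew--Lawn on residually finite rings. You instead give self-contained arguments, which for (a) and (b) is a genuine improvement in transparency: your Krull--Akizuki reduction is correct, and the step of producing a nonzero element of $J \cap R$ from a minimal polynomial and then invoking finite length of $S/a_0 S$ over $R$ (hence finiteness, since every Jordan--H\"older factor $R/\mathfrak{p}$ is finite) is exactly the right way to turn the finite-length conclusion of Krull--Akizuki into actual finiteness. In fact, what you have written is essentially a proof of the Levitz--Mott theorem the paper merely cites, so the two approaches coincide in content but differ in what they leave to the reader.

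Part (c) is where you and the paper genuinely diverge, and where there is a gap in your argument relative to the statement as given. The proposition is stated for an arbitrary finite quotient domain, and the surrounding section deliberately keeps the non-integrally-closed case in play (Proposition \ref{2.3}c) is precisely about when $I \mapsto \# R/I$ fails to be a norm). Your argument reduces the claim to $R_\mathfrak{m}$ being a DVR, which requires $R$ to be Dedekind (equivalently regular) at $\mathfrak{m}$; that is not part of the hypotheses here, and the blanket ``$R$ is always Dedekind'' convention in $\S 1.2$ belongs to the normed-ring framework rather than to this subsection. Your own observation is the right one to press on: for a non-unibranch local finite quotient domain such as the local ring at the node of $y^2 = x^2(x+1)$ over a finite field of odd characteristic, the completion is not even a domain, so the conclusion of part (c) would fail literally as stated unless one interprets ``finite quotient domain'' loosely as ``residually finite ring,'' which is what Chew--Lawn actually prove. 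So either your Dedekind hypothesis is the hidden one needed to make part (c) true in the domain sense, or the intended reading of part (c) is the residual-finiteness statement; in the former case your proof is fine but the proposition as printed is overstated, and in the latter your proof only treats a special case. Either way you should not present the Dedekind assumption as ``in force throughout the paper'' at this point, since the section explicitly contemplates finite quotient domains that are not Dedekind.
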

\begin{proof} Part a) is \cite[Thm. 2.3]{Levitz-Mott72}.  In particular, it follows from part a) that $\tilde{R}$ is a finite quotient domain.  That $\tilde{R}$ is a Dedekind ring is part of the Krull-Akizuki Theorem.  Part c) 
follows immediately from part a) and \cite[Cor. 5.3]{Chew-Lawn70}.
\end{proof} 
\noindent
Let $R$ be a finite quotient domain.  For a nonzero ideal $I$ of $R$, we define 
$|I| = \# R/I$.  It is natural to ask 
whether $I \mapsto |I|$ gives an ideal norm on $R$.  

\begin{prop}
\label{2.3}
Let $I$ and $J$ be nonzero ideals of the finite quotient domain $R$.  \\
a) If $I$ and $J$ are comaximal -- i.e., $I + J = R$ -- then $|IJ| = |I||J|$.  \\
b) If $I$ is invertible, then $|IJ| = |I||J|$.  \\
c) The map $I \mapsto |I|$ is an ideal norm on $R$ iff $R$ is integrally closed.
\end{prop}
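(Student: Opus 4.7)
For (a), the Chinese Remainder Theorem gives $IJ=I\cap J$ and $R/IJ\cong R/I\times R/J$, yielding $|IJ|=|I|\cdot|J|$.

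For (b), my plan is to exploit the short exact sequence of $R$-modules
\[ 0 \to I/IJ \to R/IJ \to R/I \to 0, \]
which gives $|R/IJ|=|R/I|\cdot|I/IJ|$ and reduces the claim to showing $|I/IJ|=|R/J|$. Since $I$ is invertible, it is finitely generated projective of constant rank $1$, so $I/IJ\cong I\otimes_R R/J$ is a finitely generated projective $R/J$-module of rank $1$. The finite ring $R/J$ is Artinian, hence a product $\prod_{i=1}^k A_i$ of local Artinian rings; over each local factor a rank-one projective module is free, so $|I/IJ|=\prod_i|A_i|=|R/J|$.

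For (c), the direction ($\Leftarrow$) is immediate: an integrally closed finite quotient domain is Dedekind (being a one-dimensional Noetherian domain), so every nonzero ideal is invertible and part (b) applies. For ($\Rightarrow$), assume the norm is multiplicative. I will show that $R_\mm$ is a DVR for every maximal ideal $\mm$, whence $R=\bigcap_\mm R_\mm$ is an intersection of integrally closed subrings of $K$, hence integrally closed. Because $\mm^n$ is $\mm$-primary, $R/\mm^n\cong R_\mm/\mm^n R_\mm$; multiplicativity then gives $|R_\mm/\mm^n R_\mm|=|R/\mm|^n$ for all $n\geq 1$. In particular the sizes are strictly increasing in $n$, so $\mm^i R_\mm\supsetneq\mm^{i+1}R_\mm$ for every $i$. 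Filtering by the $\mm$-adic filtration and counting $R/\mm$-dimensions yields
\[ \sum_{i=0}^{n-1}\dim_{R/\mm}\bigl(\mm^i R_\mm/\mm^{i+1}R_\mm\bigr)=n, \]
with each summand at least $1$, forcing each to equal $1$. Then $\mm R_\mm/\mm^2 R_\mm$ is one-dimensional over $R/\mm$, and Nakayama's lemma makes $\mm R_\mm$ principal, so $R_\mm$ is a DVR.

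The main obstacle I anticipate is the dimension count in (c): parts (a) and (b) deliver multiplicativity through soft module-theoretic bookkeeping, but the converse must extract a rigid local structural conclusion from a purely numerical identity, and it is the positivity of the graded dimensions combined with the tight equation $\sum\dim=n$ that pins down the DVR structure.
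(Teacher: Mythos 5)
Your proof is correct, and it takes a genuinely different route in two places. For part (b), the paper first proves that the ideal norm can be computed locally -- by using a primary decomposition of $I$ and the Chinese Remainder Theorem -- and then reduces to the case where $R$ is local and $I = (x)$ is principal, where the short exact sequence $0 \to xR/xJ \to R/xJ \to R/(x) \to 0$ together with the isomorphism $R/J \xrightarrow{\cdot x} xR/xJ$ finishes the proof. You instead work globally with the analogous sequence $0 \to I/IJ \to R/IJ \to R/I \to 0$ and identify $I/IJ \cong I \otimes_R R/J$ as a rank-one projective $R/J$-module, then decompose the finite Artinian ring $R/J$ into local factors to conclude $|I/IJ| = |R/J|$; you are in effect trading the paper's localization of $R$ for a decomposition of $R/J$. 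Both exploit the same SES, and both lean on the same local freeness of invertible ideals, but your version is perhaps a cleaner one-pass argument while the paper's is more elementary in that it avoids any appeal to projectivity. For part (c), the paper dispatches the forward implication (multiplicativity implies integral closure) with a citation to Butts--Wade, whereas you give a self-contained proof showing that multiplicativity forces each $R_\mm$ to be a DVR via the equality $\dim_{R/\mm} R_\mm / \mm^n R_\mm = n$ and the pigeonhole-style observation that $n$ nonzero graded pieces summing to $n$ must each have dimension one; this is a nice explicit argument and fills in what the paper leaves to a reference. One small point worth flagging: in the Nakayama step you implicitly need $\mm R_\mm \neq 0$ so that the principal maximal ideal gives a DVR rather than a field, which holds because a finite quotient domain that is not a field has Krull dimension one.
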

\begin{proof} Part a) follows immediately from the Chinese Remainder Theorem.  As for part b), we claim that the norm can be computed locally: for 
each $\mathfrak{p} \in \Sigma_R$, let $|I|_{\mathfrak{p}}$ be the norm of 
the ideal $I R_{\mathfrak{p}}$ in the local finite norm domain $R_{\mathfrak{p}}$.   Then 
\begin{equation}
\label{LOCALNORMEQ}
|I| = \prod_{\mathfrak{p}} |I|_{\mathfrak{p}}. 
\end{equation}
To see this, let $I = \bigcap_{i=1}^n \mathfrak{q}_i$ 
be a primary decomposition of $I$, with $\mathfrak{p}_i = \operatorname{rad}(\mathfrak{q}_i)$.  It follows that
$\{\mathfrak{q}_1,\ldots,\mathfrak{q}_n \}$ is a finite set of pairwise 
comaximal ideals, so the Chinese Remainder Theorem applies to give 
\[R/I \cong \prod_{i=1}^n R/\mathfrak{q}_i. \]
Since $R/\mathfrak{q}_i$ is a local ring with maximal ideal corresponding to 
$\mathfrak{p}_i$, it follows that $|\mathfrak{q}_i| = |\mathfrak{q}_i R_{\mathfrak{p}_i}|$, establishing the claim.
\\ 
Using the claim reduces us to the local case, so that we may assume the ideal $I = (x R)$ is principal.  In this case the short exact sequence of $R$-modules
\[0 \ra \frac{xR}{xJ} \ra  \frac{R}{xJ} \ra \frac{R}{(x)J} \ra 0 \]
together with the isomorphism 
\[\frac{R}{J} \stackrel{\cdot x}{\ra} \frac{xR}{xJ} \]
does the job.  \\
c) If $R$ is 
integrally closed (hence Dedekind), every ideal is invertible so 
this is an ideal norm.  The converse is \cite[Thm. 2]{Butts-Wade}.
\end{proof}
\noindent
Thus every finite quotient Dedekind domain comes endowed with an ideal norm: $|I| = \# R/I$.  We call this norm the \textbf{canonical norm}.

\begin{prop}
Let $R$ be a finite quotient Dedekind domain, and let $R_W$ be any overring.  Then the overring norm $|\cdot|_W$ on $R_W$ coincides with the canonical norm.
\end{prop}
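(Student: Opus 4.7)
The plan is to show that both norms are multiplicative homomorphisms on $\mathcal{I}^+(R_W)$ and then verify their equality on prime ideals, which suffices by unique prime factorization in the Dedekind domain $R_W$.

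First I would verify that $R_W$ is a finite quotient Dedekind domain so that the canonical norm is well-defined. Dedekindness is automatic, since an overring of a Dedekind domain is again Dedekind. For the finite quotient property, any nonzero ideal $I$ of $R_W$ factors as $\prod_{\mathfrak{p} \in W} (\mathfrak{p} R_W)^{e_{\mathfrak{p}}}$ (a finite product); applying the Chinese Remainder Theorem together with the identification $R_W / \mathfrak{p}^n R_W \cong R_{\mathfrak{p}} / \mathfrak{p}^n R_{\mathfrak{p}} \cong R/\mathfrak{p}^n$, one sees $R_W/I$ is finite. Proposition \ref{2.3}(c) then gives that the canonical norm on $R_W$ is an ideal norm, hence a monoid homomorphism. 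The overring norm $|\cdot|_W$ is a monoid homomorphism by construction, being the composite $\iota^*$ followed by $|\cdot|$.

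By unique factorization in $R_W$, it now suffices to compare the two norms on a nonzero prime ideal $\mathfrak{q}$ of $R_W$. Every such prime has the form $\mathfrak{q} = \mathfrak{p} R_W$ for a unique $\mathfrak{p} \in W$. By the definition of the overring norm, $|\mathfrak{q}|_W = |\iota^*(\mathfrak{q})| = |\mathfrak{p}| = \# R/\mathfrak{p}$. On the other hand, $\mathfrak{q}$ is maximal in $R_W$, so $R_W/\mathfrak{q}$ is a field, and since localization at $\mathfrak{q}$ does not change a field, $R_W/\mathfrak{q} \cong (R_W)_{\mathfrak{q}}/\mathfrak{q}(R_W)_{\mathfrak{q}} = R_{\mathfrak{p}}/\mathfrak{p} R_{\mathfrak{p}} \cong R/\mathfrak{p}$. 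Hence the canonical norm also yields $\# R_W / \mathfrak{q} = \# R/\mathfrak{p}$.

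There is really no hard step once multiplicativity of both norms is in hand: the only technical point is the identification $R_W / \mathfrak{p} R_W \cong R/\mathfrak{p}$ for $\mathfrak{p} \in W$, which reduces to noting that both quotients are the common residue field at the maximal ideal $\mathfrak{p}$. The slight bookkeeping in verifying that $R_W$ is itself a finite quotient Dedekind domain (so that Proposition \ref{2.3} applies) is the only mild obstacle.
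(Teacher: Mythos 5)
Your proof is correct and follows essentially the same route as the paper: reduce to prime ideals by multiplicativity, then use the equality of local rings $(R_W)_{\P} = R_{\P \cap R}$ to identify the residue fields. The paper's version is slightly more compressed (it does not pause to re-verify that $R_W$ is a finite quotient domain, since that was already established for overrings in the preceding proposition), but the substance of the argument is the same.
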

\begin{proof}
Let $\iota: R \hookrightarrow R_W$.  It is enough to check $\# R_W/I = 
|I|_W$ for every maximal ideal $I = \P$ of $R_W$.   Using the equality 
of local rings $(R_W)_{\P} = R_{\P \cap R}$ we get 
\[ |\P|_W  = \# R/(\P \cap R) = \# R_{\P \cap R}/(\P \cap R) 
R_{\P \cap R} = \# (R_W)_{\P}/\P (R_W)_{\P} = \# R_W/\P. \]
\end{proof}

\begin{prop}
\label{1.5}
Let $R$ be a finite quotient Dedekind domain with fraction field $K$, 
$L/K$ a separable field extension, and $S$ the integral closure of $R$ in 
$L$.  Then the extended norm $|\cdot|_S$ coincides with the canonical 
norm $|J| = \# S/J$.
\end{prop}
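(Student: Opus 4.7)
The plan is to reduce both sides to their values on prime ideals of $S$ and then invoke the classical residue-field cardinality identity $\#S/\mathfrak{P} = (\#R/\mathfrak{p})^{f(\mathfrak{P}/\mathfrak{p})}$.

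First I would set both objects up as ideal norms on $S$. By the preceding proposition $S$ is a finite quotient Dedekind domain, so $|J| := \#S/J$ is a genuine ideal norm by Proposition \ref{2.3}. For the extended norm: since $S$ is Dedekind, $\mathcal{I}^+(S)$ is free abelian on the primes of $S$, and I would interpret the elementwise definition $|\cdot|_S = |\cdot| \circ N_{L/K}$ of \S 1.4 as the restriction to principal ideals of the multiplicative extension of
\[
 |\mathfrak{P}|_S \ := \ |\mathfrak{p}|^{f(\mathfrak{P}/\mathfrak{p})}, \qquad \mathfrak{p} := \mathfrak{P}\cap R, \quad f(\mathfrak{P}/\mathfrak{p}) := [S/\mathfrak{P}:R/\mathfrak{p}].
\]
That these two definitions agree on principal ideals is the classical identity $(N_{L/K}(\alpha)) = \prod_i \mathfrak{p}_i^{e_i f_i}$ for the factorization $(\alpha) = \prod_i \mathfrak{P}_i^{e_i}$ in $S$; this is exactly where the separability hypothesis on $L/K$ enters.

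Once both norms are recognized as multiplicative functions on $\mathcal{I}^+(S)$, it suffices to compare them on a single prime $\mathfrak{P}$ of $S$. For such $\mathfrak{P}$ lying over $\mathfrak{p}$, the residue field $S/\mathfrak{P}$ is a degree-$f(\mathfrak{P}/\mathfrak{p})$ extension of the finite field $R/\mathfrak{p}$ of cardinality $|\mathfrak{p}|$, so
\[
 \#S/\mathfrak{P} \ = \ |\mathfrak{p}|^{f(\mathfrak{P}/\mathfrak{p})} \ = \ |\mathfrak{P}|_S,
\]
and multiplicativity of both norms completes the argument.

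The main obstacle is notational rather than substantive: \S 1.4 defines $|\cdot|_S$ only elementwise, so the first real task is to specify unambiguously its extension to ideals. That specification is precisely where separability is used, via the compatibility between the field-theoretic norm $N_{L/K}$ and the ideal-theoretic relative norm on principal ideals; once that compatibility is granted, the proposition collapses to the degree-one observation about residue fields.
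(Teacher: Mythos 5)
Your proposal is correct and follows essentially the same route as the paper: both reduce by multiplicativity to a single prime $\mathcal{P}$ of $S$, invoke the relative-norm formula $N_{L/K}(\mathcal{P}) = \pp^{f}$ (where separability enters), and conclude by counting residue-field cardinalities. Your extra care in lifting the elementwise definition of $|\cdot|_S$ from $\S$~1.4 to an ideal norm on $\mathcal{I}^+(S)$ before comparing is a clarification of the same argument rather than a genuinely different one.
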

\begin{proof} Put $n = [L:K]$.  Let $\iota: R \hookrightarrow S$ be the inclusion map.  
By multiplicativity, it is enough to treat the case of $J = \mathcal{P}$ a maximal ideal.   Let $\pp = \mathcal{P} \cap R$, and put 
$f = \dim_{R/\pp} S/\P$.
%
%
%
Now recall:
\[ N_{L/K}(\P) = \pp^f. \]
Indeed, when $\pp$ is principal this is \cite[Prop. I.22]{Lang}; since, like 
any ideal in a Dedekind domain, $N_{L/K}(\P)$ can be computed locally, this 
suffices.\footnote{What we have recalled is often taken as 
the \emph{definition} of the norm of an ideal in a finite degree separable field extension.  But our definition applies to the inseparable case as well.}  Thus 
\[ |\P|_S = |N_{L/K}(\pp)| = |\pp^f|  = \# (R/\pp)^f = \# S/\P. \]
\end{proof}
%
%
\noindent
\begin{remark} Let the hypotheses be as in Propsition \ref{1.5} except with $L/K$ purely inseparable.  Then $\mathcal{P}$ is the unique prime of $S$ 
lying over $\pp$ in $R$, so $\iota_* \pp = \P^e$, hence \[ |\P^e|_S = |N_{L/K} \iota_* \pp| = |\pp|^n, \]
so
\[ |\P|_S = |\pp|^{\frac{n}{e}}. \]
It follows that $|\P|_S = \# S/\P$ holds if and only if $ef = n$, i.e., iff $L/K$ is \textbf{defectless} in the sense of 
valuation theory.  If $K$ has transcendence degree one 
over $\F_p$ then every finite extension is defectless \cite[p. 9]{Kuhlmann}, so -- using a simple d\'evissage argument to combine the separable and purely inseparable cases -- the conclusion of Proposition \ref{1.5} also holds when $K$ is a global field of positive characteristic.
\end{remark}

\begin{example} Let $R$ be a DVR with valuation $v$, uniformizing element $\pi$ and residue field $R/(\pi) \cong \F_q$.  The canonical norm on $R$ is $x \in R^{\bullet} \mapsto q^{v(x)}$.  This is the \emph{reciprocal} of the standard 
ultrametric associated to the valuation $v$.  This norm is \emph{not} almost metric: let $x = 1$, $y = \pi^n-1$.  
Then $|x|, |y| = 1$, but $|x+y| = q^n$.  
\end{example}

\subsection{Hasse Domains}
\textbf{} \\ \\ \noindent
Let $K$ be a global field: a finite degree extension of either $\Q$ or $\F_p$.  
A \textbf{place} on $K$ is an equivalence class of almost metric 
norms on $K$.  We denote by $\Sigma_K$ the set of all places of $K$.  Let $S$ 
be a finite, nonempty subset of $\Sigma_K$ containing all the Archimedean places.  We define $\Z_{K,S}$ as the set of all elements $x \in K$ 
such that $|x|_v \leq 1$ for every ultrametric place $|\cdot|_v \in \Sigma_K \setminus S$.  Following O'Meara we call such a ring a \textbf{Hasse domain}.  
Every Hasse domain is a finite quotient Dedekind domain hence comes equipped with the canonical ideal norm $|I| = \# R/I$.
\\ \indent
For the convenience of the reader -- and to fix notation -- we recall some facts.
\\ 
$\bullet$ Suppose $K \cong \Q[t]/(f)$ is a number field.  Then the set of Archimedean places of $K$ is finite and nonempty.  More precisely, if $f$ has $r$ real 
roots and $s$ conjugate pairs of complex roots, then $K$ has $r$ real places -- 
i.e., such that the corresponding completion is isomorphic to the normed field $\R$ -- and $s$ complex places -- i.e., such that the corrsponding completion 
is isomorphic to the normed field $\C$.  We write out the infinite places as $\infty_1,\ldots,\infty_{r+s}$.  The finite places correspond to 
maximal ideals of $\Z_K$, the integral closure of $\Z$ in $K$, which is the 
unique minimal Hasse domain with fraction field $K$: any other Hasse domain $\Z_{K,S}$ with fraction field $K$ is an overring of $R$, obtained as $\bigcap_{\pp \in \MaxSpec R \setminus S_f} R_{\pp}$. \\ 
$\bullet$ Suppose $K$ has characteristic $p > 0$.  Then there is a prime 
power $q = p^f$ such that $K/\F_q(t)$ is a regular extension -- separable, with constant field $\F_q$.  There is a unique smooth, projective geometrically integral curve $C_{/\F_q}$ such that $K = \F_q(C)$ is the field of rational 
functions on $C$.  The places of $K$ are Archimedean and correspond bijectively 
to closed points on $C$, or equivalently to complete $\mathfrak{g}_{\F_q} = \Aut(\overline{\F_q}/\F_q)$-orbits of $\overline{\F_q}$-valued points of $C$.  
Thus the Hasse domains with fraction field $K$ correspond to finite unions of complete $\mathfrak{g}_{\F_q}$-orbits of $\overline{\F_q}$-points of $C$, and any 
such $R$ is the ring of rational functions 
which are regular away from the support of $D$.  There is no unique 
minimal Hasse domain in this case, because we cannot take $D = 0$: the ring 
of functions which are regular on all of $C$ is just $\F_q$.

\begin{prop}
\label{PROP5}
Let $K$ be a number field and let $R = \Z_{K,S}$ be a Hasse domain, endowed with 
its canonical norm $|\cdot|$.  Let $P \in S_f$, and suppose $P$ lies over the 
rational prime $p$.  Let $q_P = |P| = \# R/P$. \\
a) For $x \in K^{\times}$, we have 
\begin{equation}
\label{PROP5EQ}
|x| = \prod_{P \in S_f} 
q_P^{-v_P(x)} \prod_{i=1}^{r+s} |x|_{\infty_i}. 
\end{equation}
b) The norm $|\cdot|$ is almost metric iff $S_f = \varnothing$ and $K= \Q$ or is imaginary quadratic. \\
c) $A(\Z) = 2$.  If $K$ is imaginary quadratic, $A(\Z_K) = 4$.   
\end{prop}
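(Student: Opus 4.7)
My plan is to prove part (a) from the classical product formula for the number field $K$. Every maximal ideal $P$ of $R = \Z_{K,S}$ corresponds to a maximal ideal $\mathfrak{p}$ of $\Z_K$ with $\mathfrak{p} \notin S_f$, and by the overring / canonical norm coincidence proved earlier in this section, the canonical norm of the principal fractional ideal $(x)R$ factors as $\prod_{\mathfrak{p} \notin S_f} (N\mathfrak{p})^{v_\mathfrak{p}(x)}$. The product formula for $K$ says
\[ \prod_{\mathfrak{p}} (N\mathfrak{p})^{-v_\mathfrak{p}(x)} \cdot \prod_{i=1}^{r+s} |x|_{\infty_i} = 1, \]
where each archimedean factor $|x|_{\infty_i}$ is the usual absolute value $|\sigma_i(x)|$ at a real place and the \emph{square} $|\sigma_i(x)|^2$ of the standard complex modulus at a complex place. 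Splitting the finite product into $\mathfrak{p} \in S_f$ and $\mathfrak{p} \notin S_f$ and solving for the latter yields formula (\ref{PROP5EQ}).

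For part (b), I will apply Lemma \ref{LEMMA3} to the factorization (\ref{PROP5EQ}). It displays $|\cdot|$ as a product of $|S_f| + r + s$ pairwise inequivalent absolute values on $K$. Because $S$ contains every archimedean place, at least one factor is archimedean, and archimedean absolute values have value group all of $\R^{>0}$, hence densely normed. Whenever $|S_f| + r + s \geq 2$, Lemma \ref{LEMMA3}(i) -- after reordering so that a densely normed factor plays the role of $|\cdot|_2$ -- says that the product is not an absolute value on $K$, so $|\cdot|$ cannot be almost metric. Thus being almost metric forces $|S_f| + r + s = 1$, which means $S_f = \varnothing$ and $(r,s) \in \{(1,0),(0,1)\}$, i.e.\ $K = \Q$ or $K$ is imaginary quadratic. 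Conversely, in each of those cases formula (\ref{PROP5EQ}) reduces $|\cdot|$ to a single archimedean factor -- the usual absolute value on $\Q$, or the squared complex modulus on an imaginary quadratic field -- and both are easily checked to be almost metric.

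Part (c) will then be immediate from Lemma \ref{ARTINLEMMA}(a), which gives $A(R) = \max(|1|,|2|) = |2|$. For $R = \Z$, $|2| = 2$, so $A(\Z) = 2$. For $R = \Z_K$ with $K$ imaginary quadratic, $|2| = \#(\Z_K/(2)) = |N_{K/\Q}(2)| = 2^{[K:\Q]} = 4$, giving $A(\Z_K) = 4$.

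The most delicate point I expect is the normalization bookkeeping in part (a): one has to remember that in the product formula for a number field, a complex place contributes the \emph{square} of the ordinary modulus, matching the fact that $|N_{K/\Q}(x)|$ picks up $|\sigma(x)|^2$ from each pair of conjugate complex embeddings. Once this is set up correctly, the rest of the proof is essentially packaging: Lemma \ref{LEMMA3} does the work in (b) and Lemma \ref{ARTINLEMMA}(a) does the work in (c).
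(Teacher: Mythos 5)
Your proposal is correct and follows essentially the same route as the paper: the product formula (with the convention that complex places carry the squared modulus) gives part (a), Lemma \ref{LEMMA3} applied to the resulting factorization gives part (b), and Lemma \ref{ARTINLEMMA}(a) with the computation of $|2|$ gives part (c). The only cosmetic difference is that you spell out the normalization bookkeeping for complex places and the computation $|2| = |N_{K/\Q}(2)| = 4$ in the imaginary quadratic case, which the paper leaves implicit.
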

\begin{proof}
a) We recall the \textbf{product formula}: for all $x \in K^{\times}$, 
\[ \prod_{P \in \MaxSpec \Z_K} q_P^{-v_P(x)} \prod_{i=1}^{r+s} |x|_{\infty_i} = 1. \] 
Using this and (\ref{LOCALNORMEQ}) we get 
\[ |x| = \prod_{P \in \MaxSpec R} |x|_P = \prod_{P \in \MaxSpec R} q_P^{v_P(x)} = \prod_{P \in S_f} 
q_P^{-v_P(x)} \prod_{i=1}^{r+s} |x|_{\infty_i}. \]
b) Each factor on the right hand side of (\ref{PROP5EQ}) is an almost metric norm on $K$.  So if there is exactly one factor, $|x|$ is an almost metric norm.  Since there is always at least one 
infinite place, this occurs iff there are no finite places and exactly one 
infinite place, i.e., when $S = S_{\infty}$ and $K = \Q$ or is imaginary quadratic.  By Lemma \ref{LEMMA3}, the norm is not almost metric 
if there is more than one factor on the right hand side of (\ref{PROP5EQ}): hypothesis (i) is satisfied for every Archimedean place.  \\
c) This is immediate from Lemma \ref{ARTINLEMMA}.
\end{proof}
\noindent
\begin{remark} The condition that $S = S_{\infty}$ and $K = \Q$ or imaginary quadratic 
is precisely that of an $S$-integer ring in a number field to have 
finite unit group.  Whenever the unit group is infinite, the set $\{ |u+v| \ | \ u,v \in R^{\times}\}$ is unbounded.  
\end{remark}
\noindent
Proposition \ref{PROP5} has an analogue for Hasse domains of positive characteristic.  In fact it is natural to consider a more general class of 
normed domains, namely coordinate rings of an affine curve over an arbitrary 
ground field.  We do this next.

\subsection{Affine Domains}
\textbf{} \\ \\ \noindent
Let $k$ be a field, let $C_{/k}$ be a smooth, projective geometrically integral 
curve, with fraction field $K = k(C)$.  Let $C^{\circ}$ be an open affine subcurve of $C$ obtained by removing a finite, nonempty set $S_{\infty} = \{\infty_1,\ldots,\infty_m\}$ of closed points of $C$.\footnote{The Galois-theoretic description of divisors in $\S$ 1.7 relied on the perfection of $\F_q$.  This fails for closed points $P \in C$ 
for which the residue field $k(P)$ is an inseparable extension of $k$.} For $1 \leq i \leq m$, let $d_i = [k(P_i):k]$ be the degree 
of $P_i$.  Let
\[ R = k[C^{\circ}] = \bigcap_{P \notin S_{\infty}} R_P \]  
be the ring of all functions regular away from $\infty_1,\ldots,\infty_m$.  Then $R = k[C^{\circ}]$ is a Dedekind domain; we will call such a ring an \textbf{affine domain}.  
\\ \\
The ring $R$ carries a canonical norm up to equivalence: fix $q> 1$.  If $k$ is finite then we take $q = \# k$.  By Zariski's Lemma and the Chinese Remainder Theorem, for all nonzero ideals $I$ of $R$, $R/I$ is a finite-dimensional $k$-vector space, and we put  
\[ |I| = q^{\dim_k R/I}. \]
When $k$ is finite, this is the canonical norm on the Hasse domain $R$.

\begin{prop}
\label{PROP8}
\label{1.12}
a) For $f \in R^{\bullet}$, 
\begin{equation}
\label{PROP8EQ}
|f| = q^{- \sum_{i=1}^m d_i v_{\infty_i}(x)}.
\end{equation}
b) The following are equivalent: \\
(i) $m = 1$.  \\
(ii) $(R,|\cdot|)$ is ultrametric.  \\
(iii) $(R,|\cdot|)$ is almost metric.
\end{prop}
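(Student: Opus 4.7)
For part (a), the plan is to reduce $\dim_k R/(f)$ to a local computation and then invoke the degree-zero property of principal divisors on the smooth projective curve $C$. By primary decomposition followed by the Chinese Remainder Theorem (essentially as in the proof of Proposition~\ref{2.3}), one has
\[ \dim_k R/(f) = \sum_{P \in C^{\circ}} \dim_k R_P/(f) R_P. \]
Each local ring $R_P$ is a DVR with residue field $k(P)$ of $k$-dimension $d_P := [k(P):k]$, so filtering $R_P/(f)R_P$ by powers of the maximal ideal yields $\dim_k R_P/(f) R_P = d_P \cdot v_P(f)$. Hence $\dim_k R/(f) = \sum_{P \in C^{\circ}} d_P v_P(f)$. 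Now apply the global degree-zero relation $\sum_{P \in C} d_P v_P(f) = 0$ for the principal divisor $\div(f)$ of any $f \in K^\times$ on the smooth projective curve $C$; rearranging gives $\dim_k R/(f) = -\sum_{i=1}^m d_i v_{\infty_i}(f)$, and exponentiating yields (\ref{PROP8EQ}).

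For part (b), the implications (i) $\Rightarrow$ (ii) $\Rightarrow$ (iii) are essentially immediate from part (a): when $m = 1$, formula (\ref{PROP8EQ}) exhibits $|\cdot|$ as the ultrametric norm $q^{-d_1 v_{\infty_1}(\cdot)}$ associated to the discrete valuation $v_{\infty_1}$, and every ultrametric norm is metric, hence almost metric.

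The substantive step is (iii) $\Rightarrow$ (i), which I would prove by contraposition using Lemma~\ref{LEMMA3}. Assuming $m \geq 2$, part (a) (extended multiplicatively from $R^{\bullet}$ to $K^\times$) factors the norm as $|\cdot| = \prod_{i=1}^m |\cdot|_i$ with $|\cdot|_i := q^{-d_i v_{\infty_i}(\cdot)}$. These factors are pairwise inequivalent absolute values on $K$, since distinct closed points of $C$ give distinct normalized discrete valuations on $K$. The value group of $|\cdot|_i$ is $q^{d_i \Z}$, so the element $\alpha := q^{\lcm(d_1,\ldots,d_m)} > 1$ lies in $\bigcap_{i=1}^m |K^\times|_i$, verifying hypothesis (ii) of Lemma~\ref{LEMMA3}. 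Therefore $|\cdot|$ fails to be an absolute value on $K$, i.e., $A(R) = \infty$.

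The main content, such as it is, is the degree-zero formula $\deg \div(f) = 0$ for the projective completion $C$; everything else is bookkeeping about local rings and value groups, and the product-of-absolute-values obstruction has already been packaged in Lemma~\ref{LEMMA3}. A minor care point is to be sure part (a) extends from $R^{\bullet}$ to all of $K^\times$ so that the product factorization applies on the fraction field, but this is automatic from the multiplicative extension of the ideal norm to fractional ideals.
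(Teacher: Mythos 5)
Your proposal is correct and follows essentially the same route as the paper: compute $\dim_k R/(f)$ locally at finite places, invoke $\deg\div(f)=0$ on the projective completion to re-express the answer in terms of the infinite places, and then handle (iii)$\Rightarrow$(i) by applying Lemma~\ref{LEMMA3} to the factor absolute values $|\cdot|_i = q^{-d_i v_{\infty_i}(\cdot)}$. The paper is terser than you are — it does not spell out which hypothesis of Lemma~\ref{LEMMA3} applies — so your explicit observation that $\alpha = q^{\lcm(d_1,\dots,d_m)}$ realizes hypothesis (ii) (since the value groups are $q^{d_i\Z}$, none of which is dense) is a worthwhile bit of bookkeeping the paper leaves implicit.
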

\begin{proof}
The maximal ideals of $R$ are in canonical bijection with the closed points of 
$C^{\circ}$; we use $P$ to denote either one.  Let $f \in R^{\bullet}$; viewing $x$ as a rational function on $C$, consider its 
divisor 
\[ \div f = \sum_{P \in C} \deg P v_P(f) [P]. \]
Exponentiating the relation $\deg \div f = 0$ gives 
\[ q^{\sum_{P \in C^{\circ}} \deg P v_P(f) } =  q^{- \sum_{i=1}^m d_i v_{\infty_i}(x)}. \]
On the other hand, $(f) = \prod_{P \in C^{\circ}} P^{v_P(f)}$, so by the 
Chinese Remainder Theorem 
\[ |f| = q^{\dim_k R/(f)} = q^{\sum_{P \in C^{\circ}} \dim_k R/P^{v_P(f)}} \] \[ = 
q^{\sum_{P \in C^{\circ}} v_P(f) \dim_k R/P} = q^{\sum_{P \in C^{\circ}} \deg P v_P(f)} = q^{- \sum_{i=1}^m d_i v_{\infty_i}(x)}, \]
establishing part a).  As for part b): \\
(i) $\implies$ (ii):  if $m = 1$, then (\ref{PROP8EQ}) shows 
that $|\cdot|$ is obtained by exponentiating the valuation $v_{\infty}$, so 
of course gives an ultrametric.  \\
(ii) $\implies$ (iii) is immediate.  \\
(iii) $\implies$ (i) follows by applying Lemma \ref{LEMMA3} to the absolute 
values $|x|_i = q^{-d_i v_{\infty_i}(x)}$.  

\end{proof}

\subsection{Finite Length Modules, Lattices and Covolumes}
\textbf{} \\ \\ \noindent
Let $R$ be a Dedekind domain with fraction field $K$, and let $M$ be a 
finitely generated $R$-module.  Let $M[\tors]$ be its torsion submodule; 
we have a short exact sequence 
\[ 0 \ra M[\tors] \ra M \ra P \ra 0. \]
The quotient module $P$ is finitely generated and torsionfree over a Dedekind 
domain, hence projective, so the sequence splits: 
\[ M \cong M[\tors] \oplus P. \]
Further, there are maximal ideals $\pp_1,\ldots,\pp_N$ of $R$ and $n_1,\ldots,n_N \in \Z^+$ such that 
\begin{equation}
\label{TORSDEDEQ}
 M[\tors] \cong \bigoplus_{i=1}^N R/\pp_i^{n_i}. 
\end{equation}
The \textbf{length} of $M[\tors]$ is $\sum_{i=1}^N n_i$; an 
$R$-module has finite length if and only if it is finitely generated torsion.  To 
a finite length $R$-module, following \cite[$\S$ I.5]{CL} we attach the invariant 
\[ \chi(M) = \prod_{i=1}^n \pp_i^{n_i}. \]
To see that $\chi(M)$ is well defined we may appeal to the uniqueness properties of the decomposition in (\ref{TORSDEDEQ}) -- which can be easily 
reduced to the corresponding uniqueness statement for torsion modules over a PID -- or observe that $\chi(M)$ is the product of the annihilators of the Jordan-H\"older factors of $M$.  Put $r = \dim_K (P \otimes_R K)$.  Then 
\[ P \cong R^{r-1} \oplus I \]
for a fractional $R$-ideal $I$.  The class of $I$ in $\Pic I$ is an isomorphism 
invariant of $P$.  
\\ \\ 
By an $R$-\textbf{lattice} in $K^n$ we mean a finitely generated $R$-submodule $\Lambda \subset K^n$ 
such that the natural map $\Lambda \otimes_R K \ra K^n$ is a $K$-vector space 
isomorphism.  Since $\Lambda$ is a finitely generated torsionfree module over a 
Dedekind domain, it is projective.  More precisely, the structure theory 
for such modules shows that 
\[ \Lambda \cong R^{n-1} \oplus I \]
where $I$ is a nonzero fractional $R$-ideal.  The class of $I \in \Pic R$ 
is an invariant of $\Lambda$ and indeed classifies $\Lambda$ up to $R$-module 
isomorphism.  Further, the group $\GL_n(K)$ acts on the set of lattices in $K^n$ 
and the orbits are precisely the isomorphism classes of modules, i.e., are 
parameterized by $\Pic R$.  In particular $K^{\times}$ acts on lattices in $K^n$ 
via scalar matrices: for $\alpha \in K^{\times}$, we write $\alpha \Lambda$.  Two lattices which are in the same orbit under this action of scalar matrices 
are \textbf{homothetic}.  
\\ \\
We have the \textbf{standard} $R$-lattice $\mathcal{E}$ in $K^n$: the 
free $R$-module with basis $e_1,\ldots,e_n$.  A lattice $\Lambda$ is 
\textbf{integral} if $\Lambda \subset \mathcal{E}$.  Every lattice is homothetic 
to an integral lattice.  
\\ \\
If $\Lambda_1 \subset \Lambda_2 \subset K^n$ are $R$-lattices, then 
$\Lambda_2/\Lambda_1$ is a finite length $R$-module, so we may define 
$\chi(\Lambda_2/\Lambda_1)$, a nonzero ideal of $R$.  For any 
pair of lattices $\Lambda_1,\Lambda_2$ we define a \emph{fractional} $R$-ideal 
$\chi(\Lambda_2/\Lambda_1)$.  Choose $\alpha \in R^{\bullet}$ such that 
$\alpha \Lambda_1 \subset \Lambda_2$ and put 
\[ a^{-1} \chi(\Lambda_2 / a \Lambda_1). \] 
It is easy to check that this 
is independent of the choice of $a$ (c.f. \cite[$\S$ III.1]{CL}).  Finally, 
we put $\chi(\Lambda) = \chi(\mathcal{E}/\Lambda)$.  
\\ \\
If $|\cdot|$ is an ideal norm on $R$, then for any $R$-lattice $\Lambda$ in $K^n$ we define 
\[ \Covol \Lambda = |\chi(\Lambda)|. \]

\begin{prop}
\label{PROP1.5}
\label{PROP1.8} 
\label{1.13}
Let $\Lambda$ be a lattice in $K^n$, and let $M \in \GL_n(K)$.  \\
a) If $\Lambda = A \mathcal{E}$ is free, then $\chi(\Lambda) = (\det A) R$. \\
b) In the general case we have
\begin{equation}
\label{1.8EQ}
 \Covol (M \cdot \Lambda) = |\det M| \Covol \Lambda. 
\end{equation}
\end{prop}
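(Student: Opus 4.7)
The plan is to prove (a) via a local Smith normal form computation at each maximal ideal $\pp$, and then to deduce (b) formally from (a) by combining the multiplicativity of $\chi$ along chains of lattices with its invariance under $R$-module isomorphisms.

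For (a), I first treat the case $A \in M_n(R) \cap \GL_n(K)$, so that $\Lambda = A\mathcal{E} \subset \mathcal{E}$ and $\chi(\Lambda) = \chi(\mathcal{E}/\Lambda)$ is an honest integral ideal.  Because $\chi$ of a finite length module factors into local contributions --- its $\pp$-adic exponent is the $R_\pp$-length of the localization, exactly as with the ideal norm in Proposition \ref{2.3} --- it suffices to compute the length of $R_\pp^n / A R_\pp^n$ at each $\pp$.  Since $R_\pp$ is a DVR with uniformizer $\pi$, Smith normal form yields $P, Q \in \GL_n(R_\pp)$ with $PAQ = \operatorname{diag}(\pi^{a_1}, \ldots, \pi^{a_n})$, whence this length equals $\sum_i a_i = v_\pp(\det A)$.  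Thus $v_\pp(\chi(\Lambda)) = v_\pp(\det A)$ at every $\pp$, and $\chi(\Lambda) = (\det A) R$.  For a general $A \in \GL_n(K)$ I clear denominators: choose $\alpha \in R^\bullet$ with $\alpha A \in M_n(R)$, apply the integral case to obtain $\chi((\alpha A)\mathcal{E}) = (\alpha^n \det A) R$, and then invoke the scaling rule $\chi(\alpha \Lambda') = (\alpha^n) \chi(\Lambda')$ (itself a one-line local computation, since locally $\Lambda'_\pp \cong R_\pp^n$ and $R_\pp^n / \alpha R_\pp^n \cong (R_\pp/(\alpha))^n$) to conclude $\chi(A\mathcal{E}) = (\det A) R$.

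For (b) the key tool is the multiplicativity
\[
\chi(\Lambda_3/\Lambda_1) = \chi(\Lambda_3/\Lambda_2)\,\chi(\Lambda_2/\Lambda_1)
\]
for arbitrary triples of lattices in $K^n$; for nested lattices this is additivity of length across the short exact sequence, and the general fractional case follows by clearing denominators exactly as in the definition of the fractional $\chi(\Lambda_2/\Lambda_1)$.  Applied to the triple $\mathcal{E}$, $M\mathcal{E}$, $M\Lambda$ this gives
\[
\chi(M\Lambda) = \chi(\mathcal{E}/M\Lambda) = \chi(\mathcal{E}/M\mathcal{E}) \cdot \chi(M\mathcal{E}/M\Lambda).
\]
Part (a) identifies $\chi(\mathcal{E}/M\mathcal{E}) = \chi(M\mathcal{E}) = (\det M) R$, while multiplication by $M$ is an $R$-module isomorphism $\mathcal{E}/\Lambda \xrightarrow{\sim} M\mathcal{E}/M\Lambda$, so $\chi(M\mathcal{E}/M\Lambda) = \chi(\mathcal{E}/\Lambda) = \chi(\Lambda)$.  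Combining and applying the ideal norm, which extends multiplicatively to fractional ideals, yields $\Covol(M \cdot \Lambda) = |\det M| \, \Covol \Lambda$.

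The only real work is the determinant-length identification at a single prime in part (a), which is the classical Smith normal form computation over a DVR; everything else is bookkeeping for the $\chi$-invariant, provided one has already installed the multiplicativity and isomorphism-invariance of the fractional $\chi$ recalled above.
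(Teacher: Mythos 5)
Your proof is correct. Part a) follows essentially the same path as the paper: both check the ideal equality $\chi(\Lambda) = (\det A)R$ locally at each prime, both invoke Smith normal form over the resulting DVR, and both dispose of non-integral $A$ by clearing denominators via the scaling rule $\chi(\alpha\Lambda) = \alpha^n\chi(\Lambda)$. (The paper's proof actually writes $|a|^n\chi(\Lambda)$ at this step, conflating the ideal-valued invariant with its norm; your $(\alpha^n)\chi(\Lambda')$ is the right thing.)

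Part b) is where you genuinely diverge. The paper localizes again: over a DVR every lattice is free, so one writes $\Lambda = A\mathcal{E}$ and obtains $\Covol(M\Lambda) = |\det(MA)| = |\det M|\,|\det A| = |\det M|\Covol\Lambda$ as an immediate corollary of a). You instead stay global and make the formal calculus of $\chi$ carry the argument: multiplicativity across the fractional chain $\mathcal{E}$, $M\mathcal{E}$, $M\Lambda$ gives $\chi(M\Lambda) = \chi(\mathcal{E}/M\mathcal{E})\cdot\chi(M\mathcal{E}/M\Lambda)$, part a) evaluates the first factor as $(\det M)R$, and the $M$-conjugation isomorphism $\mathcal{E}/\Lambda \cong M\mathcal{E}/M\Lambda$ identifies the second with $\chi(\Lambda)$; applying $|\cdot|$ finishes. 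Both are valid. The paper's route is shorter once the local reduction is already in hand from a), and avoids having to state multiplicativity of the fractional $\chi$ explicitly; your route keeps the argument visibly coordinate-free and exhibits how $\Covol$ transforms under $\GL_n(K)$ as a structural property of the $\chi$-invariant, at the cost of having to install multiplicativity and isomorphism-invariance of the fractional $\chi$ up front (which the paper relegates to the cited reference on local fields).
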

\begin{proof} 
Equalities of fractional ideals in a Dedekind domain may be checked locally, 
so we immediately reduce to the case of $R$ a DVR. \\
a) For any $a \in R^{\bullet}$ we have $\chi(a \Lambda) = |a|^n \chi(\Lambda)$ and $|\det aA| = |a|^n |\det A|$, so by scaling we may assume that $\Lambda \subset \mathcal{E}$ and thus $A \in M_n(R)$.  Further, we may replace $A$ with $PAQ$ for any $P,Q \in \GL_n(R)$, so we may 
assume that $A$ is in Smith Normal Form: in particular, diagonal.  The
result is immediate in this case.  \\
b) Since $R$ is a DVR, $\Lambda = A \mathcal{E}$ is free and part a) applies:  
Then $\Covol \Lambda = |\det A|$ and $\Covol (M \cdot \Lambda) = 
|\det MA|$; (\ref{1.8EQ}) follows.
\end{proof}
\noindent
Let $k$ be a field, $C_{/k}$ a smooth, geometrically integral projective curve, 
and $\infty_1,\ldots,\infty_m$ closed point of $C$ of degrees $d_1,\ldots,d_m$.  Let $C^{\circ} = C \setminus \{\infty_1,\ldots,\infty_m\}$ and $R = k[C^{\circ}]$.  As in $\S 1.8$, we fix $q > 1$ and endow $R$ with the ideal $q$-norm $I \mapsto |I| = q^{\dim_k R/I}$.
\begin{lemma}
\label{LINALGLEMMA1}
\label{1.14}
For any integral lattice $\Lambda \subset R^n$, we have 
\[ \Covol \Lambda = q^{\dim_k R^n/\Lambda}. \]
\end{lemma}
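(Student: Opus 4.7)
The plan is to unwind both sides through the structure of the finite-length $R$-module $\mathcal{E}/\Lambda$ where $\mathcal{E} = R^n$, and show that they match term by term on Jordan--H\"older composition factors. By definition $\Covol \Lambda = |\chi(\mathcal{E}/\Lambda)|$, so the content of the lemma is the identity $|\chi(M)| = q^{\dim_k M}$ for every finite length $R$-module $M$.

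First I would establish that both sides of $|\chi(M)| = q^{\dim_k M}$ are multiplicative on short exact sequences of finite-length $R$-modules. On the right, $\dim_k$ is additive on $k$-vector space exact sequences, so $q^{\dim_k(\cdot)}$ is multiplicative. On the left, $\chi$ is multiplicative on short exact sequences of finite-length $R$-modules (this is essentially built into the definition via the Jordan--H\"older factors, as noted in the paragraph after (\ref{TORSDEDEQ})), and the ideal norm $|\cdot|$ is a monoid homomorphism by hypothesis. A standard d\'evissage then reduces the identity to the case where $M$ is simple, i.e. $M \cong R/\mathfrak{p}$ for some maximal ideal $\mathfrak{p}$.

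For the simple case, on the one hand $\chi(R/\mathfrak{p}) = \mathfrak{p}$, so $|\chi(R/\mathfrak{p})| = |\mathfrak{p}| = q^{\dim_k R/\mathfrak{p}}$ by definition of the ideal norm from $\S 1.8$. On the other hand $\dim_k(R/\mathfrak{p})$ is manifestly $\dim_k R/\mathfrak{p}$. The two sides agree, completing the reduction.

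Applying this with $M = \mathcal{E}/\Lambda$ (which is finite length since $\Lambda$ is a lattice contained in $\mathcal{E}$) gives
\[ \Covol \Lambda = |\chi(\mathcal{E}/\Lambda)| = q^{\dim_k \mathcal{E}/\Lambda} = q^{\dim_k R^n/\Lambda}. \]
The only mild subtlety is the bookkeeping in the d\'evissage: one must use that $\mathcal{E}/\Lambda$ has a finite composition series (which holds because it is finitely generated torsion over a Dedekind domain, hence of finite length), but this is automatic here.
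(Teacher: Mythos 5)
Your proof is correct and is essentially the same argument as the paper's: the paper picks a composition series $\Lambda = \Lambda_0 \subset \cdots \subset \Lambda_N = R^n$ with simple quotients $R/\mathfrak{p}_i$ and telescopes both $\Covol$ and $\dim_k$ along it, which is exactly the d\'evissage-to-simple-modules reduction you describe, just phrased directly in terms of the chain rather than as an abstract multiplicativity argument. Both ultimately come down to the single observation $|\mathfrak{p}| = q^{\dim_k R/\mathfrak{p}}$, which is the definition of the norm in $\S 1.8$.
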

\begin{proof} 
Let $\Lambda = \Lambda_0 \subset \Lambda_1 \subset \ldots \subset \Lambda_N = R^n$ be 
a maximal strictly ascending chain of $R$-submodules, so that $\Lambda_{i-1}/\Lambda_i \cong R/\pp_i$ for some maximal ideal $\pp_i$ of $R$.  
Then 
\[ \Covol \Lambda = |\prod_{i=1}^N \pp_i| = \prod_{i=1}^N |\pp_i| =  \prod_{i=1}^N q^{\dim_k R/\pp_i} \] \[ = 
q^{\sum_{i=1}^N \dim_k R/\pp_i} = q^{\sum_{i=1}^N \dim_k \Lambda_i/\Lambda_{i-1}} = q^{\dim_k R^n/\Lambda}. \] 
\end{proof}

\section{Linear Type Domains}

\subsection{Basic Definitions}
\textbf{} \\ \\ \noindent
Let $(R,|\cdot|)$ be an ideal normed Dedekind domain with norm group $\mathcal{N}$ and fraction field $K$.  
We say that $R$ is of \textbf{linear type} if for all $n \in \Z^+$ there is $C > 0$ such that: for all $M = (m_{ij}) \in \GL_n(K)$, 
an $R$-lattice $\Lambda \subset K^n$ and $\epsilon_1,\ldots,\epsilon_n \in \overline{\mathcal{N}}$ such that 
\begin{equation}
\label{MAINLINEARINEQ}
| \det M| \Covol \Lambda \leq C \prod_{i=1}^n \epsilon_i, 
\end{equation}
there is $x = (x_1,\ldots,x_n) \in \Lambda^{\bullet}$ such that 
\[ \forall 1 \leq i \leq n, \ \left|\sum_{j=1}^n m_{ij} x_j\right| \leq \epsilon_i. \]
When $R$ is of linear type, we let $C(R,n)$ be the supremum over all $C \in \overline{\mathcal{N}}$ such that (\ref{MAINLINEARINEQ}) holds.  We call the $C(R,n)$'s the \textbf{linear constants} of 
$R$. 

\begin{remark}
If $|\cdot|_1$ and $|\cdot|_2$ are equivalent norms on $R$, then if one is of 
linear type then so is the other.  If $|\cdot|_2 = |\cdot|_1^{\alpha}$, 
then $C_2(R,n) = C_1(R,n)^{\alpha}$.  
\end{remark}
\noindent
When $(R,\cdot)$ is $q$-normed of linear type, then $C(R,n) \in q^{\Z}$, so it 
is convenient to put $c(R,n) = \log_q C(R,n) \in \Z$.  

\begin{prop}
\label{LINTYPEUPPERBOUND}
\label{2.1}
a) If $(R,|\cdot|)$ is densely normed of linear type, then for all $n \in \Z^+$, 
\[ C(R,n) \leq 1. \]
b) Let $(R,|\cdot|)$ is $q$-normed, of linear type, and let $\alpha \in R^{\bullet} \setminus R^{\times}$.  Then for all $n \in \Z^+$,  
\[ c(q,n) \leq (\deg \alpha)n-1. \]
\end{prop}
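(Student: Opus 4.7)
The plan for both parts is the same: test the linear-type condition against the trivial configuration $M = I_n$, $\Lambda = \mathcal{E} = R^n$, and $\epsilon_1 = \cdots = \epsilon_n = \beta$ for a single scalar $\beta \in \overline{\mathcal{N}}$ with $\beta < 1$. For this configuration $|\det M| \Covol \Lambda = 1$. The key obstruction is that any $x \in (R^n)^{\bullet}$ has some coordinate $x_j \in R^{\bullet}$, and axiom (N1) then forces $|x_j| \geq 1 > \beta$, so no such $x$ can satisfy $|x_j| \leq \beta$. Hence whenever $\beta < 1$ can be arranged with $C \beta^n \geq 1$, the inequality (\ref{MAINLINEARINEQ}) holds but its conclusion fails, proving that $C$ is not an admissible linear constant.

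For part (a), given a hypothetical admissible $C > 1$, I would pick $\beta$ in $\overline{\mathcal{N}} \cap (C^{-1/n}, 1)$, which is nonempty because densely-normed means $\overline{\mathcal{N}} = \R^{>0}$. Then $C\beta^n > 1$, so (\ref{MAINLINEARINEQ}) holds while the conclusion fails by the observation above, contradicting admissibility. This forces $C(R,n) \leq 1$.

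For part (b), the natural choice is $\beta = |\alpha|^{-1} = q^{-\deg\alpha}$, which lies in $\mathcal{N} = \overline{\mathcal{N}}$ and is strictly less than $1$ because $\alpha \notin R^\times$ gives $\deg \alpha \geq 1$. With $C = q^{n \deg\alpha}$, inequality (\ref{MAINLINEARINEQ}) becomes an equality, so the same obstruction shows $C = q^{n \deg\alpha}$ is not admissible. Since admissibility is down-closed in $C$ (enlarging $C$ weakens the hypothesis and hence strengthens the property), and $\overline{\mathcal{N}} \subseteq q^\Z$, this forces $C(R,n) \leq q^{n \deg\alpha - 1}$, i.e.\ $c(R,n) \leq (\deg\alpha)n - 1$.

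The proof is essentially formal, and I do not anticipate any real obstacle. The only two small points worth verifying carefully are: (i) the ability to choose $\beta$ inside $\overline{\mathcal{N}}$, which is handled separately in the dense and discrete cases; and (ii) the monotonicity of the admissible set in $C$, which is immediate from the logical form of the linear-type condition.
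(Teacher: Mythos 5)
Your proposal is correct and follows essentially the same route as the paper's own proof: test the linear-type condition with $M = I_n$, $\Lambda = R^n$, and all $\epsilon_i$ equal to a single $\beta \in \overline{\mathcal{N}}$ with $\beta < 1$, so that axiom (N1) rules out any nonzero lattice point, forcing (\ref{MAINLINEARINEQ}) to fail; in part (a) one lets $\beta \uparrow 1$ through the dense value set, and in part (b) one takes $\beta = |\alpha|^{-1}$ and uses the discreteness of $q^{\Z}$ to sharpen $c(R,n) < n\deg\alpha$ to $c(R,n) \leq n\deg\alpha - 1$. The only cosmetic difference is that you phrase the argument as a non-admissibility claim together with the down-closure observation, whereas the paper extracts the bound directly, but the underlying content is identical.
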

\begin{proof}
a) Fix $n \in \Z^+$, take $M = 1$, $\Lambda = R^n$ and $e_1 = \ldots = e_n= (1-\delta)$ for some $0 < \delta < 1$.  There is no nonzero 
$x = (x_1,\ldots,x_n) \in R^n$ such that $|x_i| \leq (1-\delta)$ for all 
$1 \leq i \leq n$, so for any $C > 0$ satisfying the linear 
type condition we have $1 > C(1-\delta)^n$ or $C <(1-\delta)^{-n}$.  Since 
this holds for all $\delta > 0$, we get $C(R,n) \leq 1$.  \\
b) Fix $n \in \Z^+$, take $M = 1$, $\Lambda = R^n$ and $e_1 = \ldots = e_n = |x|^{-1}$.  There is no nonzero $x = (x_1,\ldots,x_n) \in R^n$ such that 
$\deg x_i \leq e_i$ for all $1 \leq i \leq n$, so 
\[ 0 = \deg (\det 1) \covol R^n > c(R,n) + \sum_{i=1}^n -\deg \alpha. \]
Thus $c(R,n) < (\deg \alpha) n$.  Since $c(R,n) \in \Z$, we have $c(R,n) \leq (\deg \alpha) n-1$.   
\end{proof}

    
\begin{example} \textbf{Minkowski's Linear Forms Theorem} implies that $\Z$ is of linear type with $C(\Z,n) \leq 1$ for all $n \in \Z^+$.  We will give a(n even) more elementary proof in $\S$ 3.2 using no more than the Pigeonhole Principle.  Together with Proposition \ref{LINTYPEUPPERBOUND}a) this gives $C(\Z,n) = 1$ for all $n \in \Z^+$ and also that this upper bound is sharp.  There is a \emph{boundary case}  left open by our setup: is $1$ an acceptable choice of $C$ in the linear type condition?  We will see that it is and actually prove a little more.
\end{example} 

\begin{example}\textbf{Tornheim's Linear Forms Theorem} implies that for any 
field $k$ and $q > 1$, $k[t]$ with norm $|f| = q^{\deg f}$ 
is $q$-normed and of linear type with $c(k[t],n) \geq n-1$.  Together with Proposition \ref{LINTYPEUPPERBOUND}b) applied with $\alpha = t$ this gives $c(k[t],n) = n-1$ for 
all $n \geq 1$ and also that this upper bound is sharp. 
\end{example}  
\noindent
It turns out to be useful to compare the linear type condition with the following \emph{a priori} weaker 
one: an ideal normed Dedekind domain $(R,|\cdot|)$ with fraction field $K$ is of 
\textbf{linear congruential type} if for all $n \in \Z^+$ there is 
$C' \in \overline{\mathcal{N}}$ such that: for all integral lattices $\Lambda \subset K^n$ and 
$\epsilon_1,\ldots,\epsilon_n > 0$ such that 
\begin{equation}
\label{MAINLINCONGEQ}
\Covol \Lambda \leq C' \prod_{i=1}^n \epsilon_i, 
\end{equation}
there is $x = (x_1,\ldots,x_n) \in \Lambda^{\bullet}$ such that for 
all $1 \leq i \leq n$, $|x_i| \leq \epsilon_i$.  If $R$ is of linear congruential type, for $n \in \Z^+$ we let $C'(R,n)$ be the supremum over constants $C'$.  We call the $C'(R,n)$'s the \textbf{linear congruential constants} of $R$.

\begin{prop}
\label{EQUIVALENCEPROP}
\label{2.4}
A normed domain is of linear type iff it is of linear congruential 
type.  Further, for all $n \in \Z^+$ we have $C(R,n) = C'(R,n)$.  \\
\end{prop}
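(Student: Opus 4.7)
The plan is to establish both $C(R,n) \ge C'(R,n)$ and $C(R,n) \le C'(R,n)$, which simultaneously yields the equivalence of the two properties and equality of the constants.

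First I would dispatch the easy direction (linear type gives linear congruential type, with $C'(R,n) \ge C(R,n)$). Assume $R$ is of linear type with constant $C = C(R,n)$. Given an integral lattice $\Lambda \subseteq K^n$ and $\epsilon_i > 0$ satisfying $\Covol \Lambda \le C \prod_i \epsilon_i$, simply apply the linear type hypothesis with $M = I_n$. The only wrinkle is that linear type demands $\epsilon_i \in \overline{\mathcal{N}}$; in the densely normed case $\overline{\mathcal{N}} = \R^{>0}$ so this is automatic, and in the $q$-normed case one replaces each $\epsilon_i$ with the largest element of $q^{\Z}$ not exceeding it, without weakening the conclusion $|x_i| \le \epsilon_i$ since $|x_i| \in q^{\Z}$.

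For the converse I would use a homothety trick to reduce to an integral lattice. Suppose $R$ is of linear congruential type with constant $C' = C'(R,n)$. Given $M \in \GL_n(K)$, a lattice $\Lambda \subseteq K^n$, and $\epsilon_i \in \overline{\mathcal{N}}$ with $|\det M| \Covol \Lambda \le C' \prod_i \epsilon_i$, form the lattice $\Lambda' = M \Lambda$; Proposition \ref{1.13} gives $\Covol \Lambda' = |\det M| \Covol \Lambda$. Since $\Lambda'$ is finitely generated over $R$, I can clear denominators on a generating set to produce $\alpha \in R^\bullet$ with $\alpha \Lambda' \subseteq R^n$ integral. Applying Proposition \ref{1.13} once more (to the scalar matrix $\alpha I_n$),
\[
\Covol(\alpha \Lambda') \;=\; |\alpha|^n \Covol \Lambda' \;=\; |\alpha|^n |\det M| \Covol \Lambda \;\le\; C' \prod_{i=1}^n \bigl(|\alpha| \epsilon_i\bigr).
\]
The linear congruential hypothesis applied to $\alpha \Lambda'$ with bounds $|\alpha| \epsilon_i$ furnishes $y \in (\alpha \Lambda')^\bullet$ with $|y_i| \le |\alpha| \epsilon_i$. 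Writing $y = \alpha M x$ for the unique $x \in \Lambda^\bullet$ and dividing by $|\alpha|$ yields $\bigl|\sum_j m_{ij} x_j\bigr| \le \epsilon_i$, the desired linear type conclusion with constant $C'$.

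The argument is almost entirely a bookkeeping exercise around the two covolume identities furnished by Proposition \ref{1.13}. The main obstacle I anticipate is the mild verification that the rescaled bounds $|\alpha| \epsilon_i$ still qualify for the two definitions: in the linear congruential target they are just positive reals, so nothing is needed; in the linear type target one must note that $|\alpha| \epsilon_i \in \overline{\mathcal{N}}$, which follows because $\overline{\mathcal{N}}$ is closed under multiplication and contains $|\alpha| \in \mathcal{N}$. Taking suprema over valid $C$ and $C'$ in each direction then gives $C(R,n) = C'(R,n)$.
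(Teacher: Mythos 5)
Your converse direction (linear congruential type implies linear type) is correct and is essentially the paper's own argument: the paper performs the same two reductions, scaling by $\alpha \in R^\bullet$ to reach an integral lattice and replacing $\Lambda$ by $M\Lambda$, just presented as separate steps rather than combined into one.

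The forward direction, however, has a hole in the rounding step. You replace each $\epsilon_i$ by $\epsilon_i' = q^{\lfloor \log_q \epsilon_i \rfloor}$ and observe, correctly, that $|x_i| \leq \epsilon_i'$ still gives $|x_i| \leq \epsilon_i$; but you never check that the hypothesis $\Covol \Lambda \leq C \prod_i \epsilon_i'$ survives the rounding, and it need not: each rounding can lose almost a full factor of $q$, so $\prod_i \epsilon_i'$ can be nearly $q^{n}$ times smaller than $\prod_i \epsilon_i$, pushing it below $\Covol\Lambda/C$. In fact the gap is unfillable if the linear congruential definition is read literally with arbitrary real $\epsilon_i > 0$. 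Take $R = \F_q[t]$ and $n = 2$, where $C(R,2) = q$; with $\Lambda = R^2$ and $\epsilon_1 = \epsilon_2 = q^{-1/3}$ one has $\Covol\Lambda = 1 \leq q\cdot q^{-2/3}$, yet no nonzero $x \in R^2$ satisfies $|x_i| \leq q^{-1/3} < 1$, so $C'(R,2) \leq 1 < q = C(R,2)$ and the proposition would be false. The intended reading must therefore have $\epsilon_i \in \overline{\mathcal{N}}$ in the linear congruential definition as well (this is in any case how the $C'(R,n)$ are used later, e.g.\ in Theorem \ref{4.5}, where only integral exponents $e_i$ appear); with that reading the forward direction is the trivial specialization $M = I_n$, $\Lambda$ integral, and your rounding argument is not needed.
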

\begin{proof}
Step 0: It is clear that linear type implies linear congruential type and 
that $C'(R,n) \leq C(R,n)$ for all $n \in \Z^+$.  \\
Step 1: Suppose $R$ is of linear congruential type.  Let $\Lambda$ be \emph{any} $R$-lattice in $K^n$ and $\epsilon_1,\ldots,\epsilon_n \in \overline{\mathcal{N}}$ be such that 
\[ \Covol \Lambda \leq C'(R,n) \prod_{i=1}^n \epsilon_i. \]
We claim that there is $y \in \Lambda^{\bullet}$ with $|y_i| \leq C'(R,n) \epsilon_i$ for all $1 \leq i \leq n$.  Choose $a \in R^{\bullet}$ such that $a \Lambda \subset \mathcal{E}$ and $|a| \epsilon_i \in |R^{\bullet}|$ for all $i$.   Then 
\[ \Covol a \Lambda = |a|^n \Covol \Lambda \leq C'(R,n) \prod_{i=1}^n |a| \epsilon_i, \]
so there is $x = (x_1,\ldots,x_n) \in (a \Lambda)^{\bullet}$ with $|x_i| \leq |a| \epsilon_i$ for all $1 \leq i \leq n$.  
Put $y = \frac{1}{a} x \in \Lambda^{\bullet}$.  Then $|y_i| \leq \epsilon_i$ 
for all $1 \leq i \leq n$. \\
Step 2: Let $M \in \GL_n(K)$, $\Lambda \subset K^n$ be an $R$-lattice, 
and $\epsilon_1,\ldots,\epsilon_n \in \overline{\mathcal{N}}$ such that 
\[ |\det M| \Covol \Lambda \leq C'(R,n) \prod_{i=1}^n \epsilon_i. \]
Put $\Lambda_M = M \Lambda$.  Suppose (\ref{MAINLINEARINEQ}) holds.  Then by 
Proposition \ref{PROP1.5}, 
\[ |\det M| \Covol \Lambda = \Covol \Lambda_M \leq C(R,n) \prod_{i=1}^n \epsilon_i, \]
so by the assumed special case there is $y = (y_1,\ldots,y_n)^{\bullet} \in 
\Lambda_M = M \Lambda$ such that for all $1 \leq i \leq n$, $|y_i| \leq 
\epsilon_i$.  But for $1 \leq i \leq n$, $y_i = \sum_{j=1}^n m_{ij} x_j$ 
for $x_j \in R$, so there is $x \in \mathcal{E}^{\bullet}$ such that for all 
$1 \leq i \leq n$, $\left| \sum_{j=1}^n m_{ij} x_j \right| \leq \epsilon_i$.  
\end{proof}

\begin{remark} 
\label{2.6}
Consider the special case of the linear type condition 
in which $\Lambda = \mathcal{E}$.  A linear change of variables shows that this is equivalent to the linear type condition for all \emph{free lattices}, hence 
to the full linear type condition when $R$ is a PID.  
\end{remark}

\subsection{Overrings}

\begin{prop}
\label{LINEAROVERPROP}
\label{2.7}
Let $(R,|\cdot|)$ be a normed Dedekind domain.  Let $W \subset \MaxSpec R$, and let $R_W$ be the corresponding overring, endowed with the norm of $\S 1.5$.  \\
a) For all $n \in \Z^+$, we have $C(R_W,n) \leq C(R,n)$.  \\
b) In particular, if $R$ is of linear type, so is $R_W$.
\end{prop}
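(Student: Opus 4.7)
The plan is to reduce to the linear congruential form via Proposition~\ref{EQUIVALENCEPROP}: given an integral $R_W$-lattice $\Lambda_W \subset \mathcal{E}_W = R_W^n$ and $\epsilon_i$ with $\Covol_W \Lambda_W \leq C \prod_i \epsilon_i$, I plan to produce $x \in \Lambda_W^{\bullet}$ with $|x_i|_W \leq \epsilon_i$, whenever $C$ satisfies the linear type condition for $R$. The natural candidate is $\Lambda := \Lambda_W \cap \mathcal{E}$ with $\mathcal{E} = R^n$: it is finitely generated as an $R$-submodule of the Noetherian $\mathcal{E}$, and any $v \in \Lambda_W$ admits $s \in R$ (whose prime factors all lie outside $W$, hence $s \in R_W^{\times}$) with $sv \in \mathcal{E}$; then $sv \in \Lambda$, so $\Lambda \otimes_R K = K^n$ and $\Lambda$ is an $R$-lattice.

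The crux is the equality $\Covol \Lambda = \Covol_W \Lambda_W$, which I would verify prime-by-prime. For $\pp \in W$: the inclusion $R_W \subset R_\pp$ identifies $R_\pp$ with the localization of $R_W$ at $\pp R_W$, so $\Lambda_{W,\pp} = (\Lambda_W)_{\pp R_W}$; integrality of $\Lambda_W$ then yields $\Lambda_\pp = \Lambda_{W,\pp} \cap \mathcal{E}_\pp = \Lambda_{W,\pp}$, making the local Jordan--H\"older contributions to $\chi(\mathcal{E}/\Lambda)$ at $\pp$ and to $\chi_W(\mathcal{E}_W/\Lambda_W)$ at $\pp R_W$ agree. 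For $\pp \notin W$: distinctness of maximal ideals gives $\pp R_{\mathfrak{q}} = R_{\mathfrak{q}}$ for every $\mathfrak{q} \in W$, whence $\pp R_W = R_W$; this in turn forces $R_W \otimes_R R_\pp = K$, so $\Lambda_{W,\pp} = K^n$ and $\Lambda_\pp = \mathcal{E}_\pp$ contributes trivially. Assembling, $\chi(\mathcal{E}/\Lambda) = \iota^*(\chi_W(\mathcal{E}_W/\Lambda_W))$ as $R$-ideals, and the equality of norms is built into the definition of the overring norm as $\mathcal{I}^+(R_W) \stackrel{\iota^*}{\to} \mathcal{I}^+(R) \stackrel{|\cdot|}{\to} \Z^+$.

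I will finish by applying the linear congruential type of $R$ to $\Lambda$ (using $\overline{|K^{\times}|_W} \subset \overline{|K^{\times}|}$), which produces $x \in \Lambda^{\bullet}$ with $|x_i| \leq \epsilon_i$; since $\Lambda \subset \Lambda_W$ and $x \in \mathcal{E}$, this $x$ lies in $\Lambda_W^{\bullet}$ and satisfies $|x_i|_W \leq |x_i| \leq \epsilon_i$, giving (a); part (b) follows at once, since a positive $C$ certifying linear type for $R$ certifies the same for $R_W$. The main obstacle is the local analysis at primes $\pp \notin W$---especially verifying the identity $R_W \otimes_R R_\pp = K$---which is what makes the intersection $\Lambda_W \cap \mathcal{E}$ functorial for covolumes.
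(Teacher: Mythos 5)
Your proposal is correct and follows the paper's own route: reduce via Proposition~\ref{EQUIVALENCEPROP} to the linear congruential condition, descend an integral $R_W$-lattice to an $R$-lattice of the same covolume, apply the $R$-condition to get $x \in \Lambda^{\bullet}$, and use $|x_i|_W \leq |x_i|$. The paper merely asserts the existence of an $R$-lattice $L$ with $L \otimes_R R_W = \Lambda_W$ and $\Covol L = \Covol_W \Lambda_W$; your explicit choice $\Lambda := \Lambda_W \cap R^n$ is precisely such an $L$, and your prime-by-prime localization argument supplies the verification the paper leaves implicit.
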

\begin{proof} By Proposition \ref{EQUIVALENCEPROP} we may deal with linear \emph{congruential} type and the constants $C'(R,n)$, $C'(R_W,n)$ instead. 
Now everything works out easily: first, every integral $R_W$-lattice $\Lambda$ is of the form $L \otimes_R R_W$ for some $R$-lattice $L$ such that $\chi(L)$ is not divisible by any 
prime in $W$, and thus $\Covol \Lambda = \Covol L$.  Thus, if 
$\epsilon_1,\ldots,\epsilon_n \in \overline{\mathcal{N}}$ are such that 
\[ \Covol \Lambda = \Covol L \leq c(R,n) \prod_{i=1}^n \epsilon_i, \]
then there is $x \in L^{\bullet}$ such that $|x_i| \leq \epsilon_i$ for all 
$1 \leq i \leq n$.  Then $x \in \Lambda^{\bullet}$ and $|x_i|_W \leq 
|x_i| \leq \epsilon_i$ for all $1 \leq i \leq n$.
\end{proof}


\subsection{Extended Norms}

\begin{ques}
Let $R$ be a linear type normed Dedekind domain with fraction field $K$, $L/K$ a finite field extension, and $S$ the integral closure of $R$ in $L$, endowed with its canonical norm of $\S 1.6$. Must $S$ be of linear type?
\end{ques}
\noindent
Although the question is a natural one, we are not able to give any kind of 
answer in this abstract setting.  The problem is that when we convert a system of inequalities $|\sum_{j=1}^n m_{ij} x_j|_S \leq \epsilon_i$ over $S$ to a system of inequalties over $R$ -- namely 
\[ \left|N_{L/K}\left(\sum_{j=1}^n m_{ij} x_j \right)\right|_R \leq \epsilon_i, \]
then the new system of inequalities is now not of a linear nature. 



\subsection{Multinormed Linear Constants}
\textbf{} \\ \\ \noindent
We give here a refinement of the notion of linear constant which takes into account that in the examples of interest to us, the norm $|\cdot|$ on $R$ need not be almost metric but is \textbf{multimetric}: a finite product of almost metric norms.  Note in particular that the canonical norms on every Hasse domain and affine domain are multimetric.  
\\ \\
We say an ideal normed Dedekind domain $(R,|\cdot|)$ is \textbf{multinormed} 
if there are elementwise norms $|\cdot|_1,\ldots,|\cdot|_m$ on $R$ such that 
$|x| = \prod_{j=1}^m |x|_j$ for all $1 \leq j \leq m$.  We say that $(R,|\cdot|)$ is \textbf{multimetric} if each norm $|\cdot|_j$ is almost metric.  (This is is the case of interest to us.)  For $1 \leq j \leq m$ we put $\mathcal{N}_j = |K^{\times}|_j$. \\ \indent
The norm $|\cdot|$ is of \textbf{q-type} iff there is $q > 0$ such that $\mathcal{N}_j \subset q^{\Z}$ for all $j$: this is the situation for affine domains.  We emphasize that more than one choice of $q$ is always possible but that such a choice will always be given as part of the structure.  As in the $m = 1$ case we put $\deg_j = \log_q |\cdot|_j$.  When each $-\deg j$ is a discrete valuation, we say the norm is \textbf{totally ultrametric}.  \\ \indent 
The norm is \textbf{totally dense} if $\mathcal{N}_j$ is dense for each $j$.  If each $|\cdot|_j$ is metric, this is equivalent to each $|\cdot|_j$ being Archimedean, and we use the terminology \textbf{totally Archimedean}.  The canonical norm on $R= \Z_K$, $K$ a number field, is totally 
Archimedean.  The norm is of \textbf{mixed type} if some $\mathcal{N}_j$ is dense and some 
$\mathcal{N}_{j'}$ is not.  The canonical 
norm on $R = \Z_{K,S}$ when $S \neq \varnothing$ is of mixed multimetric type. 
\\ \\
A multimetric ideal normed Dedekind domain $R$ is of \textbf{multinormed linear type} if for all $n \in \Z^+$ there is $C \in \overline{\mathcal{N}}$ such that: given $M = (m_{ij}) \in \GL_n(K)$, 
an $R$-lattice $\Lambda \subset K^n$ and for all $1 \leq j \leq m$ constants 
$\epsilon_{1j},\ldots,\epsilon_{nj} \in \overline{\mathcal{N}_j}$
 such that 
\begin{equation}
| \det M| \Covol \Lambda \leq C \prod_{i,j} \epsilon_{ij}, 
\end{equation}
there is $x = (x_1,\ldots,x_n) \in \Lambda^{\bullet}$ such that 
\[ \forall i,j, \ \left|\sum_{k=1}^n m_{ik} x_k\right|_j \leq \epsilon_{ij}. \]
When $R$ is of multinormed linear type, we let $C_M(R,n)$ be the supremum over all $C \in \overline{\mathcal{N}}$ such that (\ref{MAINLINEARINEQ}) holds.  We call the $C_M(R,n)$'s the \textbf{multinormed linear constants} of 
$R$.   
\noindent
We can now introduce the notion of \textbf{multinormed linear congruential type} 
and the associated constants $C'_M(R,n)$.  Just as in the linear type case it turns out that multimetric linear congruential type is equivalent to multimetric linear type and $C'_M(R,n) = C_M(R,n)$ for all $n$.  In the sequel we will estimate the multinormed linear constants using this equivalence.

\subsection{Diophantine Approximation}
\textbf{} \\ \\ \noindent
One of the most basic and important applications of Minkowski's Linear Forms Theorem is to Diophantine Approximation.  The formalism of domains of linear 
type and $q$-linear type yields analogues of these classical results.

\begin{thm}
\label{2.10}
Let $(R,|\cdot|)$ be a multinormed linear type Dedekind domain.   Let $n \in \Z^+$, $M \in \overline{\mathcal{N}} \cap (1,\infty)$, $\theta_1,\ldots,\theta_n \in K$.   \\
a) Suppose $R$ is densely normed.  If there is $C > 0$ with \[|M|^{-1} < C <  C_M(R,n+1), \] then there are $x_1,\ldots,x_n \in R$ and $x_{n+1} \in R^{\bullet}$ 
such that \\
$\bullet$  $\forall 1 \leq j \leq m$, $\forall 1 \leq i \leq n,$ $|x_{n+1} \theta_i - x_i|_j \leq (C |M|)^{\frac{-1}{mn}}$, and \\
$\bullet$ $\forall 1 \leq j \leq m$, $|x_{n+1}|_j < |M|^{\frac{1}{m}}$.  \\
b) Suppose $R$ is $q$-normed.  If 
\[ m(n+1) - \deg M \leq c_M(R,n+1), \]
then there are $x_1,\ldots,x_n \in R$, $x_{n+1} \in R^{\bullet}$ such that both of the following hold:\\
$\bullet$ For all $1 \leq i \leq n, \ 1 \leq j \leq m$, $\deg_j(x_{n+1}\theta_i - x_i) \leq \frac{m(n+1)-1-c_M(R,n+1) - \deg M}{mn}$,  \\
$\bullet$ For all $1 \leq j \leq m$, $\deg_j x_{n+1} \leq \deg_j M - 1$.
\\
c) Suppose $R$ is of mixed multinormed linear type, with $\mathcal{N}_j$ 
dense for $1 \leq j \leq m'$ and $q_j$-normed for $m' +1 \leq j \leq m$.   
If there is $C > 0$ with \[|M|^{-1} < C <  C_M(R,n+1), \] then there are $x_1,\ldots,x_n \in R$ and $x_{n+1} \in R^{\bullet}$ 
such that \\
$\bullet$  $\forall 1 \leq j \leq m'$, $\forall 1 \leq i \leq n,$ $|x_{n+1} \theta_i - x_i|_j \leq (C |M|_j)^{\frac{-1}{n}}$, \\
$\bullet$ $\forall m' < j \leq m$, $\forall 1 \leq i \leq n$, 
$|x_{n+1} \theta_i - x_i|_j \leq 1$, \\
$\bullet$ $\forall 1 \leq j \leq m'$, $|x_{n+1}|_j < |M|_j$, and \\  
$\bullet$ $\forall m' < j \leq m$, $|x_{n+1}|_j \leq 1$.  
\end{thm}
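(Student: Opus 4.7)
The plan is to derive all three parts from a single application of the multinormed linear type hypothesis to one matrix encoding simultaneous Diophantine approximation. Let
\[
A = \begin{pmatrix} I_n & -\theta \\ 0 & 1 \end{pmatrix} \in \GL_{n+1}(K),
\]
where $\theta = (\theta_1,\ldots,\theta_n)^T$, so the associated linear forms are $L_i(x) = x_i - x_{n+1}\theta_i$ for $1 \le i \le n$ and $L_{n+1}(x) = x_{n+1}$, with $\det A = 1$. Taking $\Lambda = R^{n+1}$ gives $\Covol \Lambda = 1$, so the linear type hypothesis reduces to the single requirement $\prod_{i,j}\epsilon_{ij} \geq C_M(R,n+1)^{-1}$ and, granting this, yields a nonzero $x \in R^{n+1}$ with $|L_i(x)|_j \le \epsilon_{ij}$ for all $i,j$. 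In each part it remains to choose the $\epsilon_{ij}$ matching the claimed bounds and to verify $x_{n+1} \neq 0$.

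For part (a), set $\epsilon_{ij} = (C|M|)^{-1/(mn)}$ for $1 \le i \le n$ and $\epsilon_{n+1,j} = (1-\eta)|M|^{1/m}$ for a small $\eta > 0$, which lies in $\overline{\mathcal{N}_j}$ by density. Then $\prod_{i,j}\epsilon_{ij} = (1-\eta)^m/C$, and the strict inequality $C < C_M(R,n+1)$ lets us choose $\eta$ small enough to make this at least $C_M(R,n+1)^{-1}$. For nontriviality of $x_{n+1}$: if $x_{n+1} = 0$ then each nonzero $x_i \in R$ would satisfy $|x_i|_j \ge 1$ by the elementwise norm axioms, contradicting $|x_i|_j \leq (C|M|)^{-1/(mn)} < 1$, where the last inequality uses $C|M| > 1$ from the other half of the hypothesis.

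For part (b), pass to degrees via $\deg_j = \log_q|\cdot|_j$. Take $\deg_j \epsilon_{n+1,j} = \deg_j M - 1$ (which gives $\deg_j x_{n+1} \le \deg_j M - 1$) and $\deg_j \epsilon_{ij} = a$ for $i \le n$, where $a$ is the least integer satisfying $nma + \deg M - m \geq -c_M(R,n+1)$. The hypothesis $m(n+1) - \deg M \leq c_M(R,n+1)$ forces the lower bound $L = (m - \deg M - c_M(R,n+1))/(mn)$ to satisfy $L \leq -1$, and since the target upper bound $U = (m(n+1) - 1 - c_M(R,n+1) - \deg M)/(mn)$ equals $L + 1 - 1/(mn)$, writing $L = p/(mn)$ with $p \in \Z$ shows $\lceil L \rceil \leq U$ always, so $a := \lceil L \rceil$ suffices. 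Nontriviality follows from $a \leq U < 0$, which gives $q^a < 1$ and the same contradiction as in (a).

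For part (c), hybridize the two strategies. Choose a factorization $M = \prod_j |M|_j$ with $|M|_j \in \overline{\mathcal{N}_j}$ and $|M|_j = 1$ for $j > m'$. On the dense factors $j \leq m'$, apply the method of (a) with $\epsilon_{ij} = (C|M|_j)^{-1/n}$ for $i \le n$ and $\epsilon_{n+1,j} = (1-\eta)|M|_j$; on the $q_j$-normed factors $j > m'$, set $\epsilon_{ij} = 1$ for all $i$, which are the smallest admissible values of $\overline{\mathcal{N}_j}$ matching the stated unit bounds. Compute $\prod_{i,j}\epsilon_{ij}$ factor-by-factor and compare to $C_M(R,n+1)^{-1}$ using the factorization above; the strict inequality $C < C_M(R,n+1)$ absorbs the density parameter $\eta$, and nontriviality of $x_{n+1}$ follows from the same elementwise norm argument. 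The main technical obstacle across the three parts is the delicate bookkeeping in (b), where the integrality of $a$ must be reconciled with the stated real-valued upper bound, and the choice of the factorization $|M|_j$ of $M$ in (c); everything else is a mechanical application of the multinormed linear type hypothesis to the matrix $A$ above.
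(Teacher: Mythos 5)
Your approach is structurally identical to the paper's: encode simultaneous approximation in the $(n+1)\times(n+1)$ matrix with an identity block, the $\theta$-column, and a $1$ in the corner; take $\Lambda = R^{n+1}$; and choose the tolerances $\epsilon_{ij}$ (or their logarithms $e_{ij}$) to saturate the multinormed linear type inequality. In part (b) your $a = \lceil (m - \deg M - c_M(R,n+1))/(mn)\rceil$ agrees exactly with the paper's $e_{ij}$, and your verification that $\lceil L\rceil \leq U$ via the integrality of $mnL$ matches the paper's intermediate computation.

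There is, however, a genuine error in the nontriviality step, which you invoke in all three parts. You assert that a nonzero $x_i \in R$ satisfies $|x_i|_j \geq 1$ for each individual $j$ ``by the elementwise norm axioms.'' This is false in the multinormed setting: in $R = \Z[\sqrt 2]$ the unit $1 - \sqrt 2$ has $|1-\sqrt 2|_1 = \sqrt 2 - 1 < 1$. Only the full product $|x| = \prod_j |x|_j$ is bounded below by $1$ on $R^{\bullet}$. (The paper's $\S 2.4$ does literally call the $|\cdot|_j$ ``elementwise norms on $R$,'' but read literally this contradicts (N1) in every genuine example with $m > 1$; the intended meaning, clear from the Hasse-domain and affine-domain examples and from every other proof in the paper, is that the $|\cdot|_j$ are absolute values on $K$ whose product is an elementwise norm.) The paper's own proof aggregates over $j$: in (a), if $x_{n+1} = 0$ then $|x_i| = \prod_j |x_i|_j \leq (C|M|)^{-1/n} < 1$, and (N1) applied to the product forces $x_i = 0$; in (b) it sums the $\deg_j$ to conclude $\deg x_i < 0$. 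Your argument is repaired by the same aggregation. Note finally that in part (c) the repaired nontriviality step requires $C^{m'}|M| > 1$, which for $m' > 1$ and $C < 1$ is strictly stronger than the stated hypothesis $C|M| > 1$; the paper delegates (c) to the reader without addressing this, and your plan inherits the same gap.
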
 
\begin{proof}
In all cases we take $\Lambda = R^{n+1}$ and
\begin{equation}
\label{2.10EQ}
A = \left[ \begin{array}{ccccccc} -1 & 0 & \ldots & 0 & \theta_1 \\
0 & -1 & \ldots & 0 & \theta_2 \\
\vdots & & & & \vdots \\
0 & 0 & \ldots & -1 & \theta_n \\
0 & 0 &\ldots & 0 & 1 
\end{array} \right], 
\end{equation}
so $|\det A|_j = 1$ for all $j$.  \\
a) For all $1 \leq i \leq n$ and all $1 \leq j \leq m$, put $\epsilon_{ij} = (C|M|)^{\frac{-1}{nm}}$; for all $1 \leq j \leq m$, put $\epsilon_{(n+1) j} = |M|^{\frac{1}{m}} -\delta$ for some $\delta > 0$.  Then for sufficiently small $\delta$, $C_M(R,n) \prod_{i,j} \epsilon_{ij} > 
|\det A| \Covol \Lambda$ and thus there is a nonzero $x = (x_1,\ldots,x_{n+1}) \in R^{n+1}$ such that $|x_{n+1} \theta_i - x_i|_j \leq  (C |M|)^{\frac{-1}{nm}}$ for all $1 \leq i \leq n$, $1 \leq j \leq m$ and $|x_{n+1}|_j < |M|_j$ for all $1 \leq j \leq m$.  If 
$x_{n+1} = 0$ then we would have $|x_i| \leq (C |M|)^{\frac{-1}{n}} < 1$, so 
$x_1 = \ldots = x_n = 0$ and thus $x = 0$, contradiction.  \\
b) For $1 \leq i \leq n$ and $1 \leq j \leq m$, put 
\[ e_{ij} = \lceil \frac{m-c_M(R,n+1) - \deg M}{mn} \rceil \] \[ \leq
\frac{m-c_M(R,n+1) - \deg M}{mn} + \frac{mn-1}{mn} \] \[ = \frac{m(n+1)-1-c_M(R,n+1) - \deg M}{mn}, \]
and for all $1 \leq j \leq m$,
\[ e_{(n+1)j} = (\deg_j M) -1. \]
Then 
\[ c_M(R,n+1) + \sum_{1 \leq i \leq n+1, 1 \leq j \leq m} e_{ij} \geq  0 = \deg (\det A) + \covol \Lambda,\]             
so by definition of $c_M(R,n+1)$ there is $x = (x_1,\ldots,x_n,x_{n+1}) \in (R^{n+1})^{\bullet}$, not all zero, such that 
\[\forall 1 \leq i \leq n, \forall 1 \leq j \leq m, \  \deg_j (x_{n+1} \theta_i - x_i) 
\leq  \lceil \frac{m-c_M(R,n+1) - \deg M}{mn} \rceil \]
\[ \leq 
\frac{m(n+1)-1-c_M(R,n+1) - \deg M}{mn} < 0, \]
by our hypothesis, so for all $1 \leq i \leq n$,
\[ \deg x_{n+1} \theta_i- x_i < 0. \]
As above, $x_{n+1} \neq 0$: otherwise $x_1 = \ldots = x_n = 0$ and thus $x = 0$, contradiction.  \\
c) This is very similar to part a) and may be left to the reader.
\end{proof}

\section{The Group Theoretic Pigeonhole Principle}

\subsection{The Group Theoretic Pigeonhole Principle} 

\begin{thm}(Group Theoretic Pigeonhole Principle)
\label{LINPHP}
\label{LPHP}
\label{GTPP}
Let $G$ be a group -- not necessarily commutative, but written additively -- 
and let $\Lambda$ be a subgroup of $G$.  Let $S \subset G$, and let $D(S) = \{ s_1 - s_2 \ | \ s_1, s_2 \in S\}$.  If for a cardinal number $\kappa$ we have 
\[ \# S >  \kappa \cdot \# [G:\Lambda], \]
then there are at least $\kappa$ nonzero elements of $D(S) \cap \Lambda$.  
\end{thm}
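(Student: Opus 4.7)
The plan is a direct pigeonhole argument on the partition of $S$ by cosets of $\Lambda$. First, I would introduce the equivalence relation on $S$ given by $s \sim t$ iff $s - t \in \Lambda$. In the (possibly noncommutative) additive notation, $s - t$ means $s + (-t)$, and one checks that $s \sim t$ is equivalent to $\Lambda + s = \Lambda + t$ (using right cosets, since $\lambda_1 + s = \lambda_2 + t$ rearranges to $s - t = -\lambda_1 + \lambda_2 \in \Lambda$). Hence the equivalence classes of $\sim$ inject into the right coset space $\Lambda \backslash G$, and so there are at most $\#[G : \Lambda]$ such classes.

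Next I would apply the pigeonhole principle to this partition. Suppose, for contradiction, that every equivalence class $C$ satisfied $\#C \leq \kappa$. Then
\[ \#S \;=\; \sum_{C} \#C \;\leq\; \kappa \cdot \#[G:\Lambda], \]
contradicting the hypothesis $\#S > \kappa \cdot \#[G:\Lambda]$. The finite case is immediate; the case of infinite $\kappa$ uses the standard cardinal arithmetic fact that a union of at most $\#[G:\Lambda]$ sets each of cardinality at most $\kappa$ has cardinality at most $\kappa \cdot \#[G:\Lambda]$. Thus some equivalence class $C$ has $\#C > \kappa$. Now fix any $s_0 \in C$: for each $s \in C \setminus \{s_0\}$, the difference $s - s_0$ lies in $\Lambda$ by definition of $\sim$, is nonzero, and lies in $D(S)$. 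Since $s \mapsto s - s_0$ is injective on $C \setminus \{s_0\}$, the set $(D(S) \cap \Lambda) \setminus \{0\}$ has cardinality at least $\#C - 1 \geq \kappa$ (for finite $\kappa$, $\#C \geq \kappa + 1$ gives this directly; for infinite $\kappa$, $\#C - 1 = \#C > \kappa$).

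There is no real obstacle here: the statement is essentially a restatement of the pigeonhole principle in group-theoretic language. The only points requiring care are the bookkeeping around noncommutativity -- one must consistently choose the side (right cosets above) so that the relation '$s - t \in \Lambda$' really does coincide with '$s$ and $t$ lie in the same coset' -- and the elementary but necessary cardinal arithmetic in the displayed bound.
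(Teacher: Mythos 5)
Your proof is correct and follows essentially the same route as the paper's: partition $S$ by right cosets of $\Lambda$ (the paper uses the quotient map $\Phi: g \mapsto \Lambda + g$ and looks at fibers $\Phi^{-1}(y) \cap S$, which are exactly your $\sim$-classes), invoke pigeonhole to find a class of size $> \kappa$, fix a base point $s_0$, and observe that $s \mapsto s - s_0$ injects the rest of the class into the nonzero elements of $D(S) \cap \Lambda$. The only difference is presentational: you spell out the noncommutative coset bookkeeping and the infinite-cardinal case, which the paper treats as immediate.
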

\begin{proof}
Let $G/\Lambda$ be the set of right cosets of $\Lambda$ in $G$, and let $\Phi: G \ra G/\Lambda$ be the map $g \in G \mapsto \Lambda + g$.  If $\# (\Phi^{-1}(y) \cap S) \leq \kappa$ for all $y \in G/H$ then $\# S \leq 
\kappa \cdot \# [G:\Lambda]$: contradiction.  So there is $S' \subset S$ with $\# S' > \kappa$ and $\Phi(s_1) = \Phi(s_2)$ for all $s_1,s_2 \in S'$. Fix $s_0 \in S'$ and put $S'' = S' \setminus \{s_0\}$, so $\# S'' \geq \kappa$.  As $s$ runs through $S''$, $s-s_0$ are distinct nonzero elements of $D(S) \cap \Lambda$.
\end{proof}

\subsection{The Classical Case}

\begin{thm}
\label{ZLINEARTYPETHM}
\label{3.2}
For all $n \in \Z^+$, $C(\Z,n) = 1$.
\end{thm}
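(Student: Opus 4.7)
The plan is to combine Proposition \ref{LINTYPEUPPERBOUND}a), which already gives the upper bound $C(\Z,n) \leq 1$, with a matching lower bound obtained from the Group Theoretic Pigeonhole Principle (Theorem \ref{GTPP}). The whole argument sits inside the linear congruential framework, thanks to Proposition \ref{EQUIVALENCEPROP}: it suffices to prove $C'(\Z,n) \geq 1$, i.e., that whenever $\Lambda \subset \Z^n$ is an integral lattice and $\epsilon_1,\ldots,\epsilon_n \in \R^{>0}$ satisfy
\[ \Covol \Lambda \leq \prod_{i=1}^n \epsilon_i, \]
there is a nonzero $x \in \Lambda$ with $|x_i| \leq \epsilon_i$ for all $i$. (Since $\Z$ is a PID, every integral lattice is a full-rank sublattice of $\Z^n$, and $\Covol \Lambda = [\Z^n : \Lambda]$.)

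The concrete setup is as follows. Put $d = [\Z^n : \Lambda]$, let $G = \Z^n$, and set
\[ S = \{x \in \Z^n \ | \ 0 \leq x_i \leq \lfloor \epsilon_i \rfloor \text{ for all } 1 \leq i \leq n\}. \]
Then $\#S = \prod_{i=1}^n (\lfloor \epsilon_i \rfloor + 1)$. The key numerical observation is that for every $\epsilon > 0$ one has $\lfloor \epsilon \rfloor + 1 > \epsilon$ strictly (whether or not $\epsilon$ is an integer), so
\[ \#S = \prod_{i=1}^n (\lfloor \epsilon_i \rfloor + 1) > \prod_{i=1}^n \epsilon_i \geq d = [G:\Lambda]. \]
Theorem \ref{GTPP} applied to $G, \Lambda, S$ with $\kappa = 1$ then produces a nonzero element of $D(S) \cap \Lambda$. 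Such an element is of the form $x = s_1 - s_2$ with $s_1,s_2 \in S$, so $|x_i| \leq \lfloor \epsilon_i \rfloor \leq \epsilon_i$ for all $i$, as required.

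The only step that requires a moment's care is checking that the strict inequality $\lfloor \epsilon_i \rfloor + 1 > \epsilon_i$ survives the passage from a single $\epsilon_i$ to the product: this is why one gets even the boundary case $C=1$ of Example \ref{LINTYPEUPPERBOUND} for free, and not merely every $C < 1$. There is no real obstacle; the Pigeonhole Principle does all the work once the index $[\Z^n : \Lambda]$ is identified with $\Covol \Lambda$. Combining $C'(\Z,n) \geq 1$ with Propositions \ref{EQUIVALENCEPROP} and \ref{LINTYPEUPPERBOUND}a) yields $C(\Z,n) = C'(\Z,n) = 1$.
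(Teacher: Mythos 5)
Your proposal is correct and follows essentially the same route as the paper's proof: you identify $\Covol\Lambda$ with $[\Z^n:\Lambda]$, take the same box of lattice points $S$ (written via $\lfloor\epsilon_i\rfloor$ rather than $\Z^n\cap\prod[0,\epsilon_i]$, but these coincide), apply the Group Theoretic Pigeonhole Principle, and combine with Proposition \ref{LINTYPEUPPERBOUND}a). The only cosmetic difference is that you explicitly invoke Proposition \ref{EQUIVALENCEPROP} to frame the computation as a lower bound on $C'(\Z,n)$, a step the paper leaves implicit.
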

\begin{proof}
Let $\Lambda \subset \Z^n$ be a lattice and $\epsilon_1,\ldots,\epsilon_n \in \R^{> 0}$ with \[[\Z^n:\Lambda] = \Covol \Lambda \leq \prod_{i=1}^n \epsilon_i. \]  Let $G = \Z^n$.  Put \[S = \Z^n \cap \prod_{i=1}^n [0,\epsilon_i]. \]
Then 
\begin{equation}
\label{THM10EQ1}
 \# S = \prod_{i=1}^n \left( \lfloor \epsilon_i \rfloor + 1 \right) > \prod_{i=1}^n \epsilon_i \geq \# G_2, 
\end{equation}
so by Theorem \ref{LINPHP} there are $s_1 \neq s_2 \in S$ with $s_1-s_2 \in \Lambda$.  Then $x = (x_1,\ldots,x_n) = s_1 - s_2 \in \Lambda^{\bullet}$ and has $|x_i| \leq \epsilon_i$ 
for all $1 \leq i \leq n$.  It follows that $C(\Z,n) \geq 1$.  Combining this with Proposition \ref{LINTYPEUPPERBOUND} gives $C(\Z,n) = 1$.
\end{proof}
\noindent
In (\ref{THM10EQ1}) we have $\lfloor \epsilon_i \rfloor + 1 > \epsilon_i$ for all $i$ and thus $\prod_{i=1}^n \left( \lfloor \epsilon_i \rfloor + 1 \right) > \prod_{i=1}^n \epsilon_i$.  This is more 
than we need: it would be enough to have $n$ inequalities any one of 
which is strict.  Using this one obtains the following mild strengthening 
of Theorem \ref{ZLINEARTYPETHM}.

\begin{thm}
\label{ZLINTYPE2}
\label{3.3}
Let $\Lambda \subset \Q^n$ be a $\Z$-lattice, and let $\epsilon_1,\ldots,\epsilon_n > 0$ be such that $\Covol \Lambda \leq \prod_{i=1}^n \epsilon_i$.  Fix an 
index $i_{\bullet} \in \{1,\ldots,n\}$.  Then there is $x \in \Lambda^{\bullet}$ 
such that $|x_i| < \epsilon_i$ for all $i \neq i_{\bullet}$ and $|x_{i_{\bullet}}| \leq \epsilon_{i_{\bullet}}$.
\end{thm}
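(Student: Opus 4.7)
The plan is to adapt the Pigeonhole argument from Theorem \ref{ZLINEARTYPETHM}, tweaking the shape of the box so that all coordinate axes except the distinguished one give rise to \emph{strict} difference bounds. The original proof used a closed interval in every coordinate, which yielded $|x_i| \leq \epsilon_i$; by replacing the closed interval with the half-open interval $[0,\epsilon_i)$ for $i \neq i_{\bullet}$, the difference of two integers lying in that interval is automatically strictly less than $\epsilon_i$, while only a small amount of counting slack is lost.

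Concretely, I would take $G = \Z^n$ and
\[ S = \Z^n \cap \Bigl( [0,\epsilon_{i_{\bullet}}] \times \prod_{i \neq i_{\bullet}} [0,\epsilon_i) \Bigr). \]
A short count gives $\#(\Z \cap [0,\epsilon_{i_{\bullet}}]) = \lfloor \epsilon_{i_{\bullet}} \rfloor + 1$ and $\#(\Z \cap [0,\epsilon_i)) = \lceil \epsilon_i \rceil$ for $i \neq i_{\bullet}$. Since $\lfloor \epsilon_{i_{\bullet}} \rfloor + 1 > \epsilon_{i_{\bullet}}$ always, while $\lceil \epsilon_i \rceil \geq \epsilon_i$ for the remaining indices, multiplying yields
\[ \#S \;\geq\; (\lfloor \epsilon_{i_{\bullet}} \rfloor + 1) \prod_{i \neq i_{\bullet}} \lceil \epsilon_i \rceil \;>\; \prod_{i=1}^n \epsilon_i \;\geq\; \Covol \Lambda \;=\; [\Z^n:\Lambda]. \]
Theorem \ref{GTPP} applied with $\kappa = 1$ then produces distinct $s_1,s_2 \in S$ with $x := s_1 - s_2 \in \Lambda^{\bullet}$.

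It remains to check that $x$ has the desired shape. At the distinguished coordinate both $s_{1,i_{\bullet}}$ and $s_{2,i_{\bullet}}$ lie in $[0,\epsilon_{i_{\bullet}}]$, so $|x_{i_{\bullet}}| \leq \epsilon_{i_{\bullet}}$. At any other coordinate both entries lie in $[0,\epsilon_i)$; the integers in this half-open interval are $0,1,\ldots,\lceil \epsilon_i \rceil - 1$, so the absolute difference is at most $\lceil \epsilon_i \rceil - 1 < \epsilon_i$, giving $|x_i| < \epsilon_i$.

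There is no real obstacle; the only point worth emphasizing is that the strict inequality $\lfloor \epsilon_{i_{\bullet}} \rfloor + 1 > \epsilon_{i_{\bullet}}$ at the single closed coordinate is itself enough to produce $\#S > \Covol \Lambda$, even if every $\epsilon_i$ with $i \neq i_{\bullet}$ happens to be an integer (in which case $\lceil \epsilon_i \rceil = \epsilon_i$ and no slack is available from those factors).
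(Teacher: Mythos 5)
Your proof is correct and essentially reproduces the paper's argument: the same box $S$ with one closed and the remaining half-open coordinate intervals, the same count giving $\#S > \prod_i \epsilon_i$, and the same appeal to the Group Theoretic Pigeonhole Principle. The one step you elide that the paper states explicitly is the initial reduction to $\Lambda \subset \Z^n$: the theorem permits an arbitrary $\Z$-lattice in $\Q^n$, and your identification $\Covol\Lambda = [\Z^n:\Lambda]$ presupposes integrality. This is the routine scaling argument (pick $a\in\Z^+$ with $a\Lambda \subset \Z^n$, apply the integral case with $a\epsilon_i$ in place of $\epsilon_i$, divide back), which the paper dispatches by referring to the corresponding step in Proposition \ref{2.4}; it is worth saying since the displayed inequality is otherwise not even well posed.
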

\begin{proof}
As in the proof of Proposition \ref{2.4}, it is no loss of generality 
to assume that $\Lambda \subset \mathcal{E}$ is an integral lattice.  The proof  is the same as above except we take
\[ S = \Z^n \cap \left( [0,\epsilon_{i_{\bullet}}] \times \prod_{i \neq i_{\bullet}} 
[0,\epsilon_i) \right).\]
Then $\# (\Z \cap [0,\epsilon_i)) \geq \epsilon_i$ and $\# (\Z \cap [0,\epsilon_{i_{\bullet}}]) = \lfloor \epsilon_{i_{\bullet}} \rfloor + 1 > \epsilon_{i_{\bullet}}$, so $\# S > \prod_{i=1}^n \epsilon_i$.  
\end{proof}

\begin{cor}
\label{3.4}
Let $n \in \Z^+$, $M > 1$, $\theta_1,\ldots,\theta_n \in \R$.  There are 
$x_1,\ldots,x_{n+1} \in \Z$ with 
\[ \forall 1 \leq i \leq n, \ |x_n \theta_i - x_i| \leq M^{\frac{-1}{n}}, \]
\[ 0 < |x_{n+1}| < M. \]
\end{cor}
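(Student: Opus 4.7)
The plan is to apply Theorem \ref{2.10}(a) with $R = \Z$ equipped with its (single) Archimedean absolute value: this makes $\Z$ trivially multinormed ($m = 1$), densely normed, and of linear type, with multinormed linear constant equal to the ordinary constant $C(\Z, n+1) = 1$ by Theorem \ref{ZLINEARTYPETHM}. Since $M > 1$, the interval $(M^{-1}, 1)$ is nonempty; for any $C$ in this interval Theorem \ref{2.10}(a) produces integers $x_1^{(C)}, \ldots, x_n^{(C)}$ and a nonzero integer $x_{n+1}^{(C)}$ with
\[ \bigl| x_{n+1}^{(C)} \theta_i - x_i^{(C)} \bigr| \leq (CM)^{-1/n} \quad (1 \leq i \leq n), \qquad \bigl| x_{n+1}^{(C)} \bigr| < M. \]

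The strict bound $|x_{n+1}| < M$ is already what the corollary asks for, but the approximation bound $(CM)^{-1/n}$ only tends to $M^{-1/n}$ as $C \nearrow 1$, so I would close the gap by a finiteness argument. Pick a sequence $C_k \nearrow 1$ in $(M^{-1}, 1)$ and write $x^{(k)} := x^{(C_k)}$. The integers $x_{n+1}^{(k)}$ are nonzero and bounded in absolute value by $M$, hence lie in a finite set; for each fixed value of $x_{n+1}$, the inequality $(C_k M)^{-1/n} < 1$ (valid because $C_k M > 1$) together with $|x_i^{(k)}| \leq |x_{n+1}\theta_i| + 1$ confines each $x_i^{(k)}$ to a finite subset of $\Z$. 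By pigeonhole some single tuple $(x_1, \ldots, x_{n+1})$ is realized for infinitely many $k$; passing to the limit on the corresponding inequalities yields $|x_{n+1}\theta_i - x_i| \leq \lim_{k} (C_k M)^{-1/n} = M^{-1/n}$ for each $i$, while $0 < |x_{n+1}| < M$ is inherited intact.

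The crux — and the step requiring the most care — is this boundary issue: Theorem \ref{2.10}(a) is formulated strictly below the linear constant and so just misses the sharp Dirichlet-type bound $M^{-1/n}$, necessitating the compactness step above to recover it. Everything else is essentially a mechanical application of previously established results; the only alternative I can see is a direct pigeonhole proof along the lines of Theorem \ref{ZLINTYPE2}, but that argument is naturally set up for lattices in $\Q^n$ rather than for the non-rational image $A \Z^{n+1}$ that arises here, so the compactness route via Theorem \ref{2.10} seems cleaner.
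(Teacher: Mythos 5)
Your argument is correct, but it takes a genuinely different route from the paper's. The paper's proof reruns the computation from Theorem \ref{2.10}a), but in place of the abstract linear-type inequality (which is formulated only for constants strictly below $C_M(R,n+1)$) it invokes Theorem \ref{3.3} directly: with $\Lambda = \Z^{n+1}$, $\epsilon_i = M^{-1/n}$ for $1 \le i \le n$, $\epsilon_{n+1} = M$, and $i_{\bullet}$ chosen among $\{1,\ldots,n\}$, Theorem \ref{3.3} handles the boundary case $\Covol = \prod \epsilon_i$ in one step and even delivers strict inequality in $n$ of the $n+1$ coordinates, including $|x_{n+1}| < M$, with no limiting process. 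Your compactness/pigeonhole passage from $C < 1$ to $C = 1$ is a legitimate alternative device, but there is an inconsistency in the way you motivate it: you dismiss the direct Theorem \ref{3.3} route because $A\Z^{n+1}$ need not lie in $\Q^{n+1}$ when the $\theta_i$ are irrational, yet the very same obstruction applies to the route you choose --- Theorem \ref{2.10} is stated with $\theta_1,\ldots,\theta_n \in K = \Q$. Both your argument and the paper's one-line proof are therefore tacitly using the extension of the $\Z$-linear estimate from rational to real linear forms (which is just the classical Minkowski linear forms theorem over $\R$, or, if one prefers, precisely the rational-approximation-plus-pigeonhole compactness step you already deploy for the boundary). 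Granting that extension --- as one must --- the paper's appeal to Theorem \ref{3.3} is the shorter route and yields a marginally stronger conclusion, while your compactness argument buys nothing here beyond what the boundary-case Theorem \ref{3.3} already provides.
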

\begin{proof}
The argument is the same as the proof of Theorem \ref{2.4}a) except using the slight strengthening of $C(\Z,n) = 1$ afforded by Theorem \ref{3.3}.
\end{proof}

\begin{cor}
\label{LINFORMCOR}
Let $m,n \in \Z^+$, $d_1,\ldots,d_m \in \Z^+$, $\epsilon_1,\ldots,
\epsilon_n \in \R^{> 0}$, and suppose 
\begin{equation}
\label{LINFORMEQ1}
\prod_{j=1}^n \epsilon_j \geq \prod_{i=1}^m d_i.
\end{equation}  Let $A = (a_{ij}) \in M_{m,n}(\Z)$ and $j_0 \in \{1,\ldots,n\}$.  
Fix $j_{\bullet} \in \{1,\ldots,n\}$.  There is $(x_1,\ldots,x_n) \in (\Z^n)^{\bullet}$ with \\
(i) $\sum_{j=1}^n a_{ij} x_j \equiv 0 \pmod{d_i}$ for $1 \leq i \leq m$ and \\
(ii) $|x_{j_{\bullet}}| \leq \epsilon_{i_{\bullet}}$, and $|x_i| < \epsilon_i$ for all $i \neq i_{\bullet}$.  
\end{cor}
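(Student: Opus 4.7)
The plan is to encode the congruence conditions in (i) as membership in a sublattice of $\Z^n$, and then apply the strengthened linear forms theorem, Theorem \ref{ZLINTYPE2}, to this lattice.

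More precisely, I would first define the group homomorphism
\[
\phi: \Z^n \ra \prod_{i=1}^m \Z/d_i \Z, \qquad (x_1,\ldots,x_n) \mapsto \left( \sum_{j=1}^n a_{ij} x_j \bmod d_i \right)_{i=1}^m,
\]
and set $\Lambda = \ker \phi$. Then $\Lambda$ is a $\Z$-sublattice of $\Z^n$ whose elements are precisely those integer vectors satisfying the congruence system (i). The index $[\Z^n : \Lambda]$ equals $\#\phi(\Z^n)$, which divides (and in particular is at most) $\prod_{i=1}^m d_i$. Combining with the hypothesis (\ref{LINFORMEQ1}) gives
\[
\Covol \Lambda = [\Z^n : \Lambda] \leq \prod_{i=1}^m d_i \leq \prod_{j=1}^n \epsilon_j.
\]

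Now I would invoke Theorem \ref{ZLINTYPE2} with the lattice $\Lambda$, the given $\epsilon_1,\ldots,\epsilon_n$, and the distinguished index $j_\bullet$. This yields a nonzero $x = (x_1,\ldots,x_n) \in \Lambda^\bullet$ with $|x_{j_\bullet}| \leq \epsilon_{j_\bullet}$ and $|x_j| < \epsilon_j$ for all $j \neq j_\bullet$. By construction, $x \in \Lambda$ means the congruences in (i) are satisfied, so $x$ is the desired vector.

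There is no real obstacle; the only subtlety is cosmetic. The statement as printed mentions an extra symbol ``$j_0$'' that plays no role in the conclusion (the distinguished index is $j_\bullet$), so one should silently ignore $j_0$ and carry out the argument with $j_\bullet$ as above. The crux of the proof is simply recognizing that congruence conditions on a free $\Z$-module cut out a sublattice whose covolume is controlled by the product of the moduli, after which Theorem \ref{ZLINTYPE2} does all the work.
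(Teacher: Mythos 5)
Your argument is essentially the same as the paper's: the paper defines $\Lambda_i = \{x \in \Z^n \mid \sum_j a_{ij}x_j \equiv 0 \pmod{d_i}\}$, notes each has index at most $d_i$, takes $\Lambda = \bigcap_i \Lambda_i$ (index at most $\prod_i d_i$), and applies Theorem \ref{ZLINTYPE2}; your $\ker\phi$ is exactly this intersection, and your index bound via $\#\phi(\Z^n) \mid \prod_i d_i$ is a marginally tidier way to get the same estimate. You also correctly spotted that the stray symbol $j_0$ in the statement is vestigial and should be ignored in favor of $j_\bullet$.
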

\begin{proof}
For each $1 \leq i \leq m$, the set $\Lambda_i = \{x \in \Z^n \ | \ 
\sum_{j=1}^n a_{ij} x_j \equiv 0 \pmod{d_i} \}$ is a sublattice of $\Z^n$ of 
index at most $d_i$.  Therefore $\Lambda = \bigcap_{i=1}^m \Lambda_i$ is 
a sublattice of $\Z^n$ of index at most $\prod_{i=1}^m d_i$.  Now apply 
Theorem \ref{ZLINTYPE2}.
\end{proof}
\noindent
Various special cases of Corollary \ref{LINFORMCOR} have appeared 
in the literature.  The case $m = 1$, $n =2$, $\epsilon_1 = \epsilon_2$ 
is due to A. Thue \cite{Thue}; the case $m = 1$, $n =2$ is due to I.M. Vinogradov \cite{Vinogradov}.  The case of $m,n$ 
arbitrary, but all $d_i$'s and $\epsilon_j$'s equal is due to Brauer-Reynolds 
\cite{Brauer-Reynolds}.  
The general case -- but with strict inequalities in both the hypothesis and 
conclusion is due to Stevens-Kuty \cite{Stevens-Kuty}.  Most of all, 
the result with arbitrary $m$ and $n = 3$ is due to Mordell \cite[p. 325]{Mordell51}.  


\subsection{A Linear Type Criterion for Finite Quotient Domains}

\begin{thm}
\label{METRICLINTYPETHM}
\label{3.5}
Let $R$ be an almost metric finite quotient Dedekind domain, endowed with its canonical norm. \\
a) Suppose $R$ is densely normed and that there are $\kappa,E > 0$ such that for $e \geq E$, $\# \{ x \in R \ | \ |x| \leq e\} \geq \kappa e$.  Then 
\[ C(R,n) \geq \left( \frac{\kappa}{A(R)} \right)^n. \]
b) Suppose $R$ is $q$-normed and that there are $k \in \Z$, $A \in \N$ such that for all integers $a \geq A$, 
$\# \{ x \in R \ | \ \deg x \leq a \} \geq k+a$.  Then 
\[ c(R,n) \geq nk-1. \]
\end{thm}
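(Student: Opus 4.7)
The plan is to apply the Group Theoretic Pigeonhole Principle (Theorem \ref{GTPP}) to the ambient group $G = R^n$ with a suitably rescaled copy of $\Lambda$ as subgroup. By Proposition \ref{EQUIVALENCEPROP} I will work with the linear congruential condition, and as in Step 1 of that proof restrict to integral lattices $\Lambda \subset R^n$. The key input is that for $R$ a finite quotient Dedekind domain with canonical norm and $\Lambda \subset R^n$ an integral lattice, a primary decomposition of $R^n/\Lambda$ together with Proposition \ref{2.3} yields $[R^n : \Lambda] = |\chi(\Lambda)| = \Covol \Lambda$.

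For part (a), I fix $C < (\kappa/A(R))^n$ and, given integral $\Lambda$ with tolerances $\epsilon_i > 0$ satisfying $\Covol \Lambda \leq C \prod_i \epsilon_i$, I scale by $\alpha \in R^{\bullet}$ with $|\alpha|$ large enough that $|\alpha|\epsilon_i / A(R) \geq E$ for every $i$; such $\alpha$ exists because $R$ is densely normed and so has elements of arbitrarily large norm. I apply pigeonhole with subgroup $\alpha \Lambda \subset R^n$ and box
\[ S = \{x \in R^n : |x_i| \leq |\alpha|\epsilon_i/A(R) \text{ for all } i\}. \]
The counting hypothesis gives $\#S \geq (\kappa/A(R))^n \prod_i (|\alpha|\epsilon_i)$, while $[R^n : \alpha \Lambda] = |\alpha|^n \Covol \Lambda \leq C \prod_i (|\alpha|\epsilon_i)$. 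Since $C < (\kappa/A(R))^n$ this yields $\#S > [R^n : \alpha \Lambda]$, so Theorem \ref{GTPP} produces distinct $s_1, s_2 \in S$ with $s_1 - s_2 \in \alpha \Lambda$. The almost metric inequality then makes $y = \alpha^{-1}(s_1 - s_2) \in \Lambda^{\bullet}$ a witness with $|y_i| \leq \epsilon_i$; letting $C \to (\kappa/A(R))^n$ from below gives the claimed bound.

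Part (b) will run the same scheme with two simplifications. By Lemma \ref{1.4}, $q$-normed plus almost metric forces ultrametric, so $A(R) = 1$; and the counting hypothesis, read in its natural exponential form $\#\{x \in R : \deg x \leq a\} \geq q^{k+a}$ for $a \geq A$, exactly matches the $q$-linear-congruential inequality $\covol \Lambda \leq (nk-1) + \sum_i e_i$ after scaling. Concretely, after rescaling by $\alpha \in R^{\bullet}$ with $\deg \alpha$ large enough that $e_i + \deg \alpha \geq A$ for all $i$, the box $S = \{x \in R^n : \deg x_i \leq e_i + \deg \alpha\}$ has $\#S \geq q^{nk + \sum_i(e_i + \deg \alpha)}$, strictly exceeding $[R^n : \alpha \Lambda] = q^{n \deg \alpha + \covol \Lambda}$. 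Ultrametricity of differences then gives $\deg y_i \leq e_i$ for $y = \alpha^{-1}(s_1 - s_2) \in \Lambda^{\bullet}$.

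The main technical point -- not really an obstacle -- is that the counting hypothesis only yields a useful lower bound once the tolerances cross the threshold $E$ (resp. $A$). The scaling by $\alpha$ uniformly pushes every coordinate above the threshold while multiplying both sides of the linear-congruential inequality by $|\alpha|^n$, and integrality of $\alpha \Lambda$ inside $R^n$ is automatic since $\alpha \in R$ and $\Lambda \subset R^n$.
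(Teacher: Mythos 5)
Your proposal is correct and takes essentially the same approach as the paper: apply the Group Theoretic Pigeonhole Principle to a shrunken box in $R^n$ (shrunk by $A(R)$ so the almost-metric inequality controls differences), after rescaling by some $\alpha \in R^{\bullet}$ to push the tolerances past the counting threshold. You collapse the paper's two-step argument (threshold case, then rescale) into a single rescaled application of GTPP, and you correctly read the part~(b) counting hypothesis in exponential form $q^{k+a}$ (which is evidently what is intended, since the $\F_q[t]$ corollary requires it) and note via Lemma~\ref{1.4} that $A(R)=1$ there.
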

\begin{proof}
a) Let $C < \left( \frac{\kappa}{A(R)} \right)^n$, let $\Lambda$ be an integral 
$R$-lattice in $K^n$, and let $\epsilon_1,\ldots,\epsilon_n \in \R^{>0}$ be such that 
$\Covol \Lambda \leq C \prod_{i=1}^n \epsilon_i$.  Suppose first that $\epsilon_i \geq E A(R)$ for all $i$.  
Let $S = \{x \in R^n \ | \ |x_i| \leq \frac{\epsilon_i}{A(R)} \ \forall i\}$.
By definition of $\kappa$ and $E$ we have 
\[ \# S \geq \prod_{i=1}^n \kappa \frac{\epsilon_i}{A(R)} > C \prod_{i=1}^n \epsilon_i \geq \Covol \Lambda, \]
so by Theorem \ref{LPHP} there are $s \neq s' \in S$ 
such that $x = s-s' \in \Lambda$.  Then for all $1 \leq i \leq n$, 
$|x_i| = |s_i - s_i'| \leq A(R) \max |s_i|, |s_i|' \leq \epsilon_i$, so we 
have verified the linear congruential type condition in this case.  \\ \indent
  \\
Now choose $a \in R^{\bullet}$ such that for all $1 \leq i \leq n$, 
$|a| \epsilon_i \geq E A(R)$.  Then 
\[ \Covol a \Lambda = |a|^n \Covol \Lambda \leq C \prod_{i=1}^n |a| \epsilon_i, \]
so by the case done above there is $y \in (a\Lambda)^{\bullet}$ with $|y_i| \leq |a| \epsilon_i$ for all $1 \leq i \leq n$.  Put $x = \frac{1}{a} y$, so $x \in \Lambda^{\bullet}$ 
and $|x_i| \leq \epsilon_i$ for all $1 \leq i \leq n$.  It follows that $C(R,n) \geq \left( \frac{\kappa}{A(R)} \right)^n$.  \\
b) The argument is entirely similar to that of part a).
\end{proof}
\noindent
We denote Lebesgue measure in $\R^n$ by $\Vol$.  The following result is well known, but we include the proof to show how elementary it is.

\begin{prop}
\label{3.7}
For bounded $\Omega \subset \R^n$ and $r > 0$, let $r \Omega = \{r P \ | \ p \in \Omega\}$, let $\Lambda \subset \R^n$ be a lattice, and let 
\[ L_{\Omega,\Lambda}(r) = \# r\Omega \cap \Lambda \]
be the \textbf{lattice point enumerator}.  If $\Vol(\partial \Omega) = 0$, then
\[ \lim_{r \ra \infty} \frac{L_{\Omega,\Lambda}(r)}{r^n} = \frac{\Vol\Omega}{\Covol \Lambda}. \]
\end{prop}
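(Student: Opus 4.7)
The plan is to reduce to the case $\Lambda = \Z^n$ by a linear change of variables, and then run the classical cube-tiling argument. Writing $\Lambda = A\Z^n$ for some $A \in \GL_n(\R)$ and setting $\Omega' = A^{-1}\Omega$, the map $x \mapsto A^{-1}x$ carries $r\Omega \cap \Lambda$ bijectively onto $r\Omega' \cap \Z^n$, sends $\partial\Omega$ to the Lebesgue-null set $A^{-1}(\partial\Omega) = \partial\Omega'$, and rescales volumes by $|\det A|^{-1} = \Covol(\Lambda)^{-1}$. Hence $\Omega'$ still satisfies the hypotheses and the desired limit reduces to $L_{\Omega',\Z^n}(r)/r^n \ra \Vol(\Omega')$; from now on I may assume $\Lambda = \Z^n$.

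Attach to each $\lambda \in \Z^n$ the closed unit cube $Q_\lambda = \lambda + [0,1]^n$; these cubes tile $\R^n$ with pairwise overlaps of measure zero. Define
\[ I(r) = \{\lambda \in \Z^n : Q_\lambda \subset r\Omega\}, \qquad O(r) = \{\lambda \in \Z^n : Q_\lambda \cap r\Omega \neq \varnothing\}. \]
Since $\lambda \in Q_\lambda$ one has $I(r) \subset r\Omega \cap \Z^n \subset O(r)$, giving $|I(r)| \leq L_{\Omega,\Z^n}(r) \leq |O(r)|$; summing disjoint cube volumes against $r\Omega$ gives the parallel sandwich $|I(r)| \leq r^n \Vol(\Omega) \leq |O(r)|$. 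For $\lambda \in O(r) \setminus I(r)$ the convex cube $Q_\lambda$ contains a point $x$ of $r\Omega$ and a point $y$ outside $r\Omega$; the supremum of $t \in [0,1]$ with $x + t(y-x) \in r\Omega$ produces a point of $Q_\lambda \cap \partial(r\Omega)$, so since $Q_\lambda$ has diameter $\sqrt{n}$, the whole cube lies in the tube $\partial(r\Omega) + B(0,\sqrt{n})$, where $B(0,t)$ denotes the closed Euclidean ball. Using essential disjointness of the cubes and the scaling $\partial(r\Omega) = r\partial\Omega$,
\[ |O(r)| - |I(r)| \leq \Vol\bigl(\partial(r\Omega) + B(0,\sqrt{n})\bigr) = r^n \Vol\bigl(\partial\Omega + B(0,\sqrt{n}/r)\bigr). \]

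The only nontrivial step is showing that the factor on the right tends to zero, which is precisely where the hypothesis $\Vol(\partial\Omega) = 0$ enters. Since $\Omega$ is bounded, the tubes $\partial\Omega + B(0,\sqrt{n}/r)$ lie in a fixed bounded set and form a decreasing family as $r \ra \infty$ whose intersection equals $\overline{\partial\Omega} = \partial\Omega$; continuity of Lebesgue measure from above therefore gives $\Vol(\partial\Omega + B(0,\sqrt{n}/r)) \ra 0$. Combining this with the two sandwich inequalities divided by $r^n$ forces $L_{\Omega,\Z^n}(r)/r^n$ and $\Vol(\Omega)$ to share the same limit, finishing the proof. No step presents a genuine obstacle; the one thing to watch is invoking boundedness of $\Omega$ to justify continuity of measure from above.
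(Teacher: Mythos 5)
Your argument is correct.  It reaches the same conclusion as the paper but by a more hands-on route.  The paper's proof also reduces to $\Lambda=\Z^n$ by a linear change of variables (then further normalizes so that $M\Omega\subset(-1,1)^n$), and then observes that $L_{M\Omega,\Z^n}(r)/r^n$ is literally a Riemann sum for the characteristic function $1_{M\Omega}$ over the partition of $[-1,1]^n$ into subsquares of side $1/r$; since $\Vol(\partial(M\Omega))=0$ and $M\Omega$ is bounded, $1_{M\Omega}$ is Riemann integrable by Lebesgue's criterion, and the Riemann sums converge to $\int 1_{M\Omega}=\Vol(M\Omega)$.  You instead perform the Darboux-style inner/outer cube count directly and control $|O(r)|-|I(r)|$ by showing all borderline cubes lie in a tube around $\partial(r\Omega)$, then invoke continuity of Lebesgue measure from above (the boundedness of $\Omega$ is exactly what makes the tubes have finite measure, as you flag).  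In effect you are reproving the relevant direction of Lebesgue's criterion inline.  What the paper's route buys is brevity — it outsources the estimate to a standard theorem.  What your route buys is self-containedness, an argument that works transparently for real $r\to\infty$ (the paper restricts to $r\in\Z^+$, which is harmless but slightly informal), and it makes visible exactly where the hypothesis $\Vol(\partial\Omega)=0$ enters.

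One small point worth recording explicitly: your change-of-variables step needs the observation that $A^{-1}(\partial\Omega)=\partial(A^{-1}\Omega)$ and that linear maps send Lebesgue-null sets to Lebesgue-null sets; you state the former, and the latter follows from the scaling identity $\Vol(A^{-1}S)=|\det A|^{-1}\Vol(S)$ you use, so you are fine — just be aware that this is the justification.
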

\begin{proof}
Let $M \in \GL_n(\R)$ be such that $M \Lambda = \Z^n$.  Then for all $r > 0$, 
\[ \# (r \Omega \cap \Lambda) = \# (r M \Omega \cap \Z^n) \] 
and 
\[ \Vol M \Omega = \frac{\Vol\Omega}{|\det M|^{-1}} = \frac{\Vol \Omega}{\Covol \Lambda}, \]
so we may replace $(\Omega,\Lambda)$ by $(M \Omega,\Z^n)$.  Further, by making a change of variable $r \mapsto \frac{r}{R}$ we may assume $M \Omega \subset (-1,1)^n$.  Since $M \Omega$ is bounded and $\Vol(\partial M\Omega) = 0$, the characteristic function $1_{M \Omega}$ is Riemann integrable.  For any $r \in \Z^+$, $\frac{L_{M \Omega,\Z^n}(r)}{r^n}$ is a Riemann sum 
for $1_{M \Omega}$ and the partition of $[-1,1]^n$ into subsquares of 
side length $\frac{1}{r}$.  The result follows.    
\end{proof}

\begin{cor}
\label{COR19}
Let $K$ be an imaginary quadratic field.  Then for all $n \in \Z^+$, 
\begin{equation}
\label{LINCONIMAGQUADEQ}
 C(\Z_K,n) \geq \left( \frac{\pi}{2 \sqrt{|\Delta_K|}} \right)^n. 
\end{equation}
\end{cor}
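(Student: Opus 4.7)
The plan is a direct application of Theorem \ref{METRICLINTYPETHM}a). I must verify that $(\Z_K,|\cdot|)$ is densely normed and almost metric with a known Artin constant, and then estimate the counting function $e \mapsto \#\{x \in \Z_K : |x| \leq e\}$ from below by a constant times $e$, with the constant as close as possible to $2\pi/\sqrt{|\Delta_K|}$. The first check is immediate from Proposition \ref{PROP5}b) and c): the canonical norm on $\Z_K$ is almost metric with $A(\Z_K)=4$ when $K$ is imaginary quadratic, and it is densely normed because $|K^\times| \supset \Q^{>0}$.

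For the counting, fix a complex embedding $\sigma: K \hookrightarrow \C$. By Proposition \ref{1.5} together with the standard identification $|N_{K/\Q}(x)| = |\sigma(x)|_\C \cdot |\overline{\sigma(x)}|_\C$, we have
\[ |x| = |\sigma(x)|_\C^2 \qquad (x \in \Z_K). \]
Under $\sigma$, the ring $\Z_K$ becomes a full $\Z$-lattice in $\C \cong \R^2$ of covolume $\sqrt{|\Delta_K|}/2$ (a standard discriminant/covolume computation). Consequently
\[ \#\{x \in \Z_K : |x| \leq e\} = L_{\Omega,\sigma(\Z_K)}(\sqrt{e}), \]
where $\Omega \subset \R^2$ is the closed unit disk, with $\Vol(\Omega) = \pi$ and $\Vol(\partial\Omega) = 0$.

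Proposition \ref{3.7} then gives
\[ L_{\Omega,\sigma(\Z_K)}(r) \;\sim\; \frac{\pi r^2}{\sqrt{|\Delta_K|}/2} \;=\; \frac{2\pi\, r^2}{\sqrt{|\Delta_K|}} \qquad (r \to \infty). \]
Setting $r = \sqrt{e}$, for every $\kappa < 2\pi/\sqrt{|\Delta_K|}$ there is $E > 0$ with $\#\{x \in \Z_K : |x| \leq e\} \geq \kappa e$ for all $e \geq E$. Feeding this into Theorem \ref{METRICLINTYPETHM}a) yields $C(\Z_K,n) \geq (\kappa/4)^n$ for every such $\kappa$; letting $\kappa \nearrow 2\pi/\sqrt{|\Delta_K|}$ gives the asserted bound. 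I do not anticipate a genuine obstacle: the proof consists in repackaging the classical disk-area / lattice-covolume computation for $\Z_K$ so that it slots into the hypothesis of Theorem \ref{METRICLINTYPETHM}a), with the $A(\Z_K)=4$ factor accounting for the squaring relation between the canonical norm and the Euclidean norm.
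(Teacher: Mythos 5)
Your proof is correct and follows the paper's argument step for step: compute the covolume of $\sigma(\Z_K)$ in $\C\cong\R^2$, invoke Proposition \ref{3.7} for the lattice-point asymptotic, and feed the resulting $\kappa < 2\pi/\sqrt{|\Delta_K|}$ into Theorem \ref{METRICLINTYPETHM}a) with $A(\Z_K)=4$. One small slip worth noting: $|K^{\times}|\supset\Q^{>0}$ is generally false for imaginary quadratic $K$ (e.g.\ $3$ is not a norm from $\Q(i)$); dense normedness instead follows because $|n|=n^2\in|K^{\times}|$ for all $n\in\Z^+$, and the group these generate is already dense in $\R^{>0}$.
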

\begin{proof}
Step 1: The complex place of $K$ gives an embedding $\sigma: K \rightarrow \C$ 
which realizes $\Z_K$ as a lattice in $\C \cong \R^2$; the norm $|\cdot|$ is the square of the usual Euclidean norm.  The lattice $\sigma(\Z_K)$ has covolume\footnote{Later on we will take our Haar measure on $\C$ to be \emph{twice} the standard Lebesgue measure: this would double both the covolume of $\sigma(\Z_K)$ and the volumes of the balls $\{x \ | \ |x| \leq e\}$, so it would not change the final result.} $2^{-1} \sqrt{|\Delta(K)|}$ and 
$\Vol(\{x \in \R^2 \ | \ |x| \leq e\}) = \pi e$. Applying Proposition \ref{3.7} we get that as $e \ra \infty$, 
\[\# \{x \in \Z_K \ | \ |x| \leq e \} \sim \left( \frac{2\pi}{\sqrt{ |\Delta(K)|}} \right) e. \]  
Step 2: We have $A(\Z_K) = \max |1|, |2| = 4$.  For each fixed $\delta > 0$, 
the hypotheses of Theorem \ref{METRICLINTYPETHM}a) apply with $\kappa = \frac{2 \pi}{\sqrt{ |\Delta(K)|}} - \delta$ and thus 
\[ C(\Z_K,n) \geq \left( \frac{\frac{2 \pi}{\sqrt{ |\Delta(K)|}} - \delta}{4} \right)^n. \]
Letting $\delta$ approach zero we get (\ref{LINCONIMAGQUADEQ}).
\end{proof} 

\begin{cor}
For all $n \in \Z^+$, $c(\F_q[t],n) = n-1$.
\end{cor}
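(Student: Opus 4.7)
The plan is to sandwich $c(\F_q[t],n)$ between matching upper and lower bounds, each supplied by machinery already proved in the paper.

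For the upper bound, I would invoke Proposition \ref{LINTYPEUPPERBOUND}b) with the non-unit $\alpha = t \in \F_q[t]$. Since the canonical norm on $\F_q[t]$ satisfies $|f| = q^{\deg f}$, we have $\deg t = 1$, and the proposition yields
\[ c(\F_q[t], n) \leq (\deg t) \cdot n - 1 = n - 1. \]
This step is essentially free.

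For the lower bound, my preferred route is Theorem \ref{METRICLINTYPETHM}b). The ring $\F_q[t]$ is a $q$-normed finite quotient Dedekind domain, and it is ultrametric by Proposition \ref{PROP8}b) (since it corresponds to the affine line, which has a unique point at infinity), hence in particular almost metric. To apply the theorem I need to check the counting hypothesis: the polynomials of degree at most $a$ form an $\F_q$-vector space of dimension $a + 1$, so
\[ \#\{x \in \F_q[t] : \deg x \leq a\} = q^{a+1}, \]
which for every $a \geq 0$ majorizes the required linear lower bound with $k = 1$. The theorem then delivers $c(\F_q[t], n) \geq n \cdot 1 - 1 = n - 1$. (As an alternative, one could simply cite Tornheim's Linear Forms Theorem, recalled in Example 2.4, which already gives this lower bound for $k[t]$ over an arbitrary field.)

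Combining the two bounds yields the equality $c(\F_q[t], n) = n - 1$. There is no genuine obstacle here: once one has identified $\alpha = t$ as the optimal witness for the upper bound and verified the elementary vector-space count for the lower bound, both invocations are immediate. The only subtle point worth flagging is that the pigeonhole route through Theorem \ref{METRICLINTYPETHM}b) requires the ultrametricity (equivalently, almost-metricity) of the canonical norm on $\F_q[t]$, which is precisely the $m = 1$ case of Proposition \ref{PROP8}b) and hence available to us.
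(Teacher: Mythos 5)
Your proof is correct and follows essentially the same route as the paper: the paper's own argument deduces $c(\F_q[t],n) \geq n-1$ from Theorem \ref{METRICLINTYPETHM}b) via the count $\#\{x \in \F_q[t] \mid \deg x \leq a\} = q^{a+1}$, and caps it at $n-1$ with Proposition \ref{LINTYPEUPPERBOUND}b) applied to $\alpha = t$. Your additional remarks (verifying almost-metricity via Proposition \ref{PROP8}b), and citing Tornheim's theorem as an alternative lower bound) are sound but go slightly beyond what the paper explicitly records in its two-line proof.
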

\begin{proof} For all $a \in \N$, 
\[\# \{x \in \F_q[t] \ | \ \deg x \leq a \} = q^{a+1}, \]
Applying Theorem \ref{3.5}b) we get $c(\F_q[t],n) \geq n-1$ for all $n \in \Z^+$.  Combining this with Proposition \ref{2.1}b) gives the result.  
\end{proof} 
\noindent
We could now pursue the positive characteristic analogue of Corollary \ref{COR19} by using GoN methods to give bounds on $\{x \in R \ | \ \deg x \leq a \}$.  However, this would involve developing (or importing) GoN methods for Hasse domains of positive characteristic.  But there is a more efficient approach which works for affine domains over an arbitrary ground field: observe that $\{x \in R \ | \ \deg x \leq a\}$ is a Riemann-Roch space and apply (Riemann's portion of) the Riemann-Roch Theorem.  We do so next.



\section{Affine Domains}
\noindent
Let $k$ be a field, $C_{/k}$ a smooth, geometrically integral projective curve, 
and $\infty_1,\ldots,\infty_m$ closed points of $C$ of degrees $d_1,\ldots,d_m$.  Let $C^{\circ} = C \setminus \{\infty_1,\ldots,\infty_m\}$ and $R = k[C^{\circ}]$.  As in $\S 1.8$, we fix $q > 1$ and endow $R$ with the ideal $q$-norm $I \mapsto |I| = q^{\dim_k R/I}$.  We will show that $R$ is of linear $q$-type and give explicit lower bounds on the linear $q$-constants $c(R,n)$.

\subsection{Tornheim's Theorem}
\textbf{} \\ \\ \noindent
It is natural to look first at the case $R = k[t]$, $K = k(t)$.  We have already seen 
that when $k$ is finite, $R$ is of linear type and indeed $c(k[t],n) = n-1$.  
In this section we will show this same result over an arbitrary field $k$.  
In fact a result equivalent to this was first established in a(n apparently little known -- it has no MathSciNet citations as of May 2013) work of L. Tornheim \cite{Tornheim}.  \\ \indent
Tornheim's original proof is more complicated than is necessary.  Our desire to treat a more 
general case also brings certain complications, so we have decided to begin with  a simple proof of Tornheim's Theorem.

\begin{lemma}
\label{TORNLEMMA}
Let $C \in M_n(R) \cap \GL_n(K)$, and let $\Lambda = C R^n$.  Then $\Lambda$ is an $R$-submodule of $R^n$, so we may form the quotient $R$-module $R^n/\Lambda$.  
Then 
\[ \dim_k (R^n/\Lambda) = \deg \det C. \]
\end{lemma}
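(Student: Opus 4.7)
The plan is to deduce this identity as a direct corollary of two results already established in the excerpt: Proposition \ref{PROP1.8}a), which computes $\chi(\Lambda)$ for a free lattice via a determinant, and Lemma \ref{LINALGLEMMA1}, which identifies the covolume of an integral lattice with $q^{\dim_k R^n/\Lambda}$. The whole proof is then just a matter of evaluating $\Covol \Lambda$ in two ways and equating.

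First I would verify that $\Lambda = CR^n$ satisfies the hypotheses of both results. Since $C \in \GL_n(K)$, multiplication by $C$ is a $K$-linear automorphism of $K^n$, hence the $R$-linear map $C\colon R^n \to K^n$ is injective with image $\Lambda$ spanning $K^n$ over $K$; so $\Lambda$ is an $R$-lattice in $K^n$, and in fact $\Lambda$ is free on the columns $Ce_1,\ldots,Ce_n$ of $C$ since $C\colon R^n \xrightarrow{\sim} \Lambda$ is an $R$-module isomorphism. The hypothesis $C \in M_n(R)$ ensures $\Lambda \subset \mathcal{E} = R^n$, so $\Lambda$ is integral.

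Having set this up, I would invoke Proposition \ref{PROP1.8}a) with $A = C$ to conclude $\chi(\Lambda) = (\det C) R$. Applying the (fractional) ideal norm and unwinding the identification $|I| = q^{\dim_k R/I}$ on principal ideals gives
\[ \Covol \Lambda = |\chi(\Lambda)| = |(\det C) R| = q^{\deg \det C}. \]
Separately, Lemma \ref{LINALGLEMMA1} applied to the integral lattice $\Lambda$ yields $\Covol \Lambda = q^{\dim_k R^n/\Lambda}$. Equating these and comparing exponents of $q$ gives $\dim_k R^n/\Lambda = \deg \det C$, as desired.

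There is essentially no obstacle: the machinery in $\S 1.9$ has been engineered precisely so that the lemma is a two-line synthesis. The only point requiring a moment's thought is confirming that $\Lambda = CR^n$ really is a free integral lattice (so that Proposition \ref{PROP1.8}a) applies on the nose rather than needing to be recomputed locally), and that the extension of $|\cdot|$ to principal fractional ideals agrees with $q^{\deg(\cdot)}$ on $K^{\times}$ -- both of which are immediate from the definitions.
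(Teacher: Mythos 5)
Your proof is correct and takes exactly the route the paper indicates: deduce the identity as an immediate consequence of Proposition \ref{1.13}a) (which gives $\chi(\Lambda) = (\det C)R$) and Lemma \ref{1.14} (which gives $\Covol\Lambda = q^{\dim_k R^n/\Lambda}$). The paper also sketches an alternative direct argument via Smith Normal Form, but your synthesis matches its primary stated proof.
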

\begin{proof}
An immediate consequence of Proposition \ref{1.13} and Lemma \ref{1.14}.  
But let us also indicate a direct proof: since $R$ is a PID, 
we may use Smith Normal Form to reduce to the case in which $C$ is diagonal, which is very easy.
\end{proof}

\begin{lemma}
\label{4.2}
\label{LAPP}
(Linear Algebraic Pigeonhole Principle)
Let $V$ be a $k$-vector space and $W_1,W_2$ be linear subspaces 
of $V$.  If $\dim W_1 > \dim V/W_2$, then $W_1 \cap W_2 \neq \{0\}$.  
\end{lemma}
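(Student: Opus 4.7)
My plan is to view the statement as a rank-nullity argument applied to the natural map from $W_1$ into the quotient $V/W_2$.

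First I would define the linear map $\varphi : W_1 \to V/W_2$ obtained by composing the inclusion $W_1 \hookrightarrow V$ with the canonical projection $V \twoheadrightarrow V/W_2$. The key observation is that the kernel of $\varphi$ is exactly $W_1 \cap W_2$, since an element $w \in W_1$ maps to $0$ in $V/W_2$ if and only if $w \in W_2$.

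Next I would invoke rank-nullity (valid for possibly infinite-dimensional spaces as long as one interprets the inequality in cardinal arithmetic, but the hypothesis $\dim W_1 > \dim V/W_2$ forces $\dim V/W_2$ finite for the inequality to be nontrivially useful; either way the argument goes through). We have
\[
\dim \ker \varphi \;=\; \dim W_1 - \dim \operatorname{im} \varphi \;\geq\; \dim W_1 - \dim V/W_2 \;>\; 0,
\]
so $\ker \varphi = W_1 \cap W_2$ is nonzero.

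There is essentially no obstacle here: this is a two-line fact from linear algebra whose only subtle point is ensuring the map $\varphi$ is set up so that its kernel is the intersection, and that the inequality chain above is interpreted correctly when some of the dimensions might be infinite (in which case one may restrict to a finite-dimensional subspace of $W_1$ of dimension $\dim V/W_2 + 1$ and run the same argument there).
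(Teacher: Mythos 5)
Your proof is correct and is the standard argument: map $W_1$ into $V/W_2$, note the kernel is $W_1 \cap W_2$, and apply rank–nullity. The paper itself declares the lemma ``immediate'' and supplies no proof, so there is nothing to compare against; your write-up simply makes the obvious argument explicit.
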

\noindent
The proof is immediate.

\begin{thm}(Tornheim \cite{Tornheim})
\label{TORNTHM}
Let $k$ be a field; let $C = (c_{ij})\in \GL_n(K)$.  For $1 \leq i \leq n$, put $L_i(x) = \sum_{j=1}^n c_{ij} x_j$.  Let $e_1,\ldots,e_n \in \N$ be 
such that 
\begin{equation}
\label{TORNDEGEQ1}
\deg  \det C \leq n-1 + \sum_{i=1}^n e_i.
\end{equation}

Then there exists $x \in (R^N)^{\bullet}$ such that for all $1 \leq i \leq N$, 
\[ \deg L_i(x) \leq e_i. \]
\end{thm}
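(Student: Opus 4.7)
The plan is to prove Tornheim's Theorem by a clean application of Lemma \ref{LAPP} (LAPP) together with the ``length $=$ degree of determinant'' computation of Lemma \ref{TORNLEMMA}.

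First, I would reduce to the case where $C \in M_n(R) \cap \GL_n(K)$. Choose $d \in R^{\bullet}$ that clears the denominators of the $c_{ij}$, so that $dC \in M_n(R)$. Replacing $C$ by $dC$ and $e_i$ by $e_i + \deg d$ for each $i$, the hypothesis (\ref{TORNDEGEQ1}) is preserved, because $\deg \det(dC) = n\deg d + \deg \det C$, and a solution $x$ of the modified problem is a solution of the original one. So from now on assume $C \in M_n(R)$.

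Next, set $\Lambda = CR^n \subseteq R^n$, an $R$-submodule of $R^n$. By Lemma \ref{TORNLEMMA},
\[ \dim_k(R^n/\Lambda) = \deg \det C. \]
Now fix an integer $D \geq \max_i e_i$ and introduce three $k$-subspaces of $R^n$: the ambient space
\[ V_D = \{ y \in R^n : \deg y_j \leq D \text{ for all } j \}, \qquad \dim_k V_D = n(D+1), \]
and within it the two subspaces
\[ W_1 = \{ y \in R^n : \deg y_i \leq e_i \text{ for all } i \}, \qquad W_2 = \Lambda \cap V_D. \]
Note $\dim_k W_1 = \sum_{i=1}^n (e_i + 1) = n + \sum_i e_i$. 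The composition $V_D \hookrightarrow R^n \twoheadrightarrow R^n/\Lambda$ has kernel exactly $W_2$, so it induces an injection $V_D/W_2 \hookrightarrow R^n/\Lambda$, yielding
\[ \dim_k(V_D/W_2) \leq \deg \det C. \]

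Finally, I invoke LAPP inside $V_D$. By hypothesis (\ref{TORNDEGEQ1}),
\[ \dim_k W_1 = n + \sum_{i=1}^n e_i \geq \deg \det C + 1 > \dim_k(V_D/W_2), \]
so $W_1 \cap W_2 \neq \{0\}$. A nonzero element $y$ of this intersection lies in $\Lambda = CR^n$, so $y = Cx$ for some $x \in (R^n)^{\bullet}$ (nonzero because $C$ is invertible over $K$), and by definition of $W_1$ we have $\deg L_i(x) = \deg y_i \leq e_i$ for all $i$.

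I do not anticipate serious obstacles: the only mild subtlety is setting up the ambient space $V_D$ correctly so that both $W_1$ and $W_2$ live inside a single finite-dimensional $k$-vector space and the quotient $V_D/W_2$ injects into $R^n/\Lambda$ (which is what converts the \emph{algebraic} invariant $\deg \det C$ into a statement about codimension in $V_D$). Once that framework is in place, Lemma \ref{TORNLEMMA} supplies the codimension bound and LAPP immediately yields a nonzero lattice point with the desired degree bounds.
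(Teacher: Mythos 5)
Your proof is correct and follows essentially the same path as the paper's: reduce to $C \in M_n(R)$ by clearing denominators, compute $\dim_k R^n/\Lambda = \deg\det C$ via Lemma \ref{TORNLEMMA}, and apply the Linear Algebraic Pigeonhole Principle to find a nonzero vector in $\Lambda \cap \mathcal{B}$. The one cosmetic difference is your introduction of the truncated ambient space $V_D$; this is harmless but unnecessary, since Lemma \ref{LAPP} is stated for an arbitrary $k$-vector space $V$ and both $W_1 = \mathcal{B}$ and $V/W_2 = R^n/\Lambda$ are already finite-dimensional when one simply takes $V = R^n$.
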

\begin{proof}
Step 1: We suppose $C \in M_n(R)$.  Consider the linear map $L: K^n \ra K^n, x \mapsto Cx$,
and put $\Lambda = L(R^n) \subset K^N$.  Since $\det C \neq 0$, we have $L^{-1}: K^n \ra K^n$, and thus $L^{-1}|_{\Lambda}: \Lambda \stackrel{\sim}{\ra} R^n$.  Put
\[ \mathcal{B} = \{ (x_1,\ldots,x_n) \in R^n \ | \ \forall 1 \leq i \leq n, \ \deg x_i \leq e_i \}. \]
Then $\mathcal{B}$ is a $k$-subspace of $R^n$ with $\dim_k \mathcal{B} = \sum_{i=1}^n (e_i + 1)$. 
By Lemma \ref{TORNLEMMA}, $\dim_k R^n/\Lambda = \deg \det C$.  So  (\ref{TORNDEGEQ1}) can be restated as
\[ \dim_k \mathcal{B} > \dim_k R^n/\Lambda.\]
By Remark \ref{4.2} there is a nonzero vector $y \in \Lambda \cap \mathcal{B}$.  Taking $x = L^{-1} y$ does the job.  \\
Step 2: In the general case, choose $f \in R^{\bullet}$ such that $f C \in M_n(R)$.  Then \[\deg \det fC \leq n-1 +  \sum_{i=1}^n (e_i + \deg f), \]
so by Step 1, there is $x \in (R^n)^{\bullet}$ with $\deg f + \deg L_i(x) = \deg f L_i(x) \leq e_i + \deg f$ for all $1 \leq i \leq n$, so $\deg L_i(x) \leq e_i$ 
for all $1 \leq i \leq n$.
\end{proof}

\begin{cor}
\label{TORNHEIMTHM}
\label{4.4}
For all $n \geq 1$, $c(k[t],n) = n-1$.
\end{cor}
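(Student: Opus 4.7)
The plan is to sandwich $c(k[t], n)$ between an easy upper bound and a lower bound coming from Tornheim's Theorem.

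The upper bound $c(k[t], n) \leq n-1$ is immediate from Proposition \ref{LINTYPEUPPERBOUND}(b) applied with $\alpha = t$: since $t$ is a nonunit of $k[t]$ with $\deg t = 1$, the bound reads $c(k[t],n) \leq (\deg t) \cdot n - 1 = n-1$.

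For the matching lower bound $c(k[t], n) \geq n-1$ I will verify the linear type condition with constant $C = q^{n-1}$. Since $k[t]$ is a PID, Remark \ref{2.6} lets me restrict attention to the standard lattice $\Lambda = \mathcal{E} = R^n$, for which $\Covol \Lambda = 1$. The condition to check then reads: for $M \in \GL_n(K)$ and integers $e_1, \ldots, e_n$ with $\deg \det M \leq (n-1) + \sum_i e_i$, there exists $x \in (R^n)^\bullet$ satisfying $\deg L_i(x) \leq e_i$ for every $i$, where $L_i$ denotes the $i$-th linear form associated with $M$. When each $e_i \in \N$, this is exactly Tornheim's Theorem \ref{TORNTHM}. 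To reach arbitrary $e_i \in \Z$, I rescale: let $D$ be the diagonal matrix with entries $t^{-e_1}, \ldots, t^{-e_n}$ and put $M' = DM$, so that $\deg \det M' = \deg \det M - \sum_i e_i \leq n - 1$; then Tornheim applied to $M'$ with all new exponents equal to $0$ produces a nonzero $x \in R^n$ with $\deg(t^{-e_i} L_i(x)) \leq 0$, hence $\deg L_i(x) \leq e_i$ as required.

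Combining the two bounds yields $c(k[t], n) = n-1$. The only mildly delicate step is the diagonal rescaling used to bridge between Tornheim's stated hypothesis $e_i \in \N$ and the arbitrary $e_i \in \Z$ allowed by the linear type definition; every other ingredient is already in place.
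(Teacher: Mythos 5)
Your proof is correct and follows the paper's own approach: the upper bound from Proposition~\ref{LINTYPEUPPERBOUND}b) with $\alpha = t$, and the lower bound by reducing to the free-lattice case via Remark~\ref{2.6} and then invoking Tornheim's Theorem~\ref{TORNTHM}. The one genuine improvement in your write-up is the diagonal rescaling by $\operatorname{diag}(t^{-e_1},\ldots,t^{-e_n})$ to bridge Tornheim's hypothesis $e_i \in \N$ with the full linear-type condition, which allows arbitrary $e_i \in \Z$; the paper's one-line proof silently elides this step, and your treatment makes the reduction explicit and airtight.
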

\begin{proof}
Since $k[t]$ is a PID, all $R$-lattices are free, so by Remark \ref{2.6} 
the special case of the linear $q$-type condition we've checked is equivalent 
to the general case: $c(k[t],n) \geq n-1$.  The upper bound comes from Proposition \ref{2.1}. 
\end{proof}

\subsection{Affine Domains Are Of Multimetric Linear Type}

\begin{thm}
\label{4.5}
 Let $k$ be a field and $C^{\circ}/k$ be a nice affine curve of genus $g$.  Let $R = k[C^{\circ}]$ be its affine 
coordinate ring.  Let $d = \min \deg \infty_1,\ldots, \infty_m$.  Then
\[ c(R,n) \geq n(2-d-g) -1. \]
\end{thm}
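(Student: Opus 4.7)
\emph{Proof plan.} The strategy is to mimic the proof of Tornheim's Theorem (Theorem \ref{TORNTHM}), replacing the elementary dimension count for $k[t]$ by Riemann's inequality from Riemann-Roch. First I would invoke Proposition \ref{EQUIVALENCEPROP} to reduce to the linear congruential type condition: it suffices to show that for each integral $R$-lattice $\Lambda \subseteq R^n$ and each tuple of integers $e_1, \ldots, e_n$ (with $\epsilon_i = q^{e_i}$) satisfying $\covol \Lambda \leq c + \sum_i e_i$ for $c := n(2 - d - g) - 1$, there is a nonzero $x \in \Lambda$ with $\deg x_i \leq e_i$ for all $i$.

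Next, pick an infinity point $\infty_\star$ with $\deg \infty_\star = d$ and set $D_i := \lfloor e_i / d \rfloor [\infty_\star]$. I would then verify two properties of the Riemann-Roch space $L(D_i)$. First, because $D_i$ is supported at infinity, every $f \in L(D_i)$ has $v_P(f) \geq 0$ for all $P \in C^\circ$, so $L(D_i) \subseteq R$. Second, since $v_{\infty_j}(f) \geq 0$ for all $\infty_j \neq \infty_\star$ while $v_{\infty_\star}(f) \geq -\lfloor e_i / d \rfloor$, Proposition \ref{1.12}a) gives $\deg f \leq d \lfloor e_i / d \rfloor \leq e_i$. By Riemann's inequality, $\dim_k L(D_i) \geq \deg D_i + 1 - g \geq e_i - d + 2 - g$.

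Finally, form the $k$-subspace $V := L(D_1) \oplus \cdots \oplus L(D_n) \subseteq R^n$. By construction it consists of vectors $x$ with $|x_i| \leq q^{e_i}$, and $\dim_k V \geq \sum_i e_i + n(2 - d - g)$. By Lemma \ref{1.14}, $\dim_k R^n / \Lambda = \covol \Lambda \leq c + \sum_i e_i = \dim_k V - 1 < \dim_k V$. The Linear Algebraic Pigeonhole Principle (Lemma \ref{LAPP}) then produces a nonzero $x \in V \cap \Lambda$, which is the required witness.

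The main obstacle is calibrating the ``granularity loss'' from the floor function $\lfloor e_i / d \rfloor$: it is this loss that turns a clean $n(1-g)$ bound into $n(2-d-g)$. Concentrating $D_i$ at a single closed point of minimal degree $d$ keeps this loss to $d-1$ per coordinate, which is optimal for the non-multimetric condition; the same argument run in the multimetric setting, with $D_i = \sum_j n_{ij}[\infty_j]$ and the exponents $e_{ij}$ already landing in $d_j \Z$, would avoid the floor entirely and give the sharper multimetric bound $c_M(R, n) \geq n(1 - g) - 1$.
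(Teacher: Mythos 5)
Your proof is correct and uses the same essential ingredients as the paper's: Riemann's inequality to lower-bound the dimension of a space of small elements, fed into the Linear Algebraic Pigeonhole Principle (Lemma \ref{LAPP}) against $\dim_k R^n/\Lambda = \covol \Lambda$ (Lemma \ref{1.14}). The route differs in a small but genuine way. The paper first establishes the estimate in the case $m = 1$ for $R_1 := k[C \setminus \{\infty_\star\}]$, where $\{x \in R_1 : \deg x \leq e_i\}$ \emph{is exactly} the Riemann-Roch space $\mathcal{L}(\lfloor e_i/d \rfloor [\infty_\star])$, and then transfers the bound to the overring $R$ via Proposition \ref{2.7}. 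You instead work directly over $R$: since $D_i$ is supported at the single closed point $\infty_\star$, the space $\mathcal{L}(D_i)$ lands inside $R$, and its members automatically satisfy $\deg f \leq d\lfloor e_i/d\rfloor \leq e_i$ for the full norm on $R$ because the remaining infinite places contribute only non-positively to $\deg$. This bypasses the overring comparison entirely; the floor estimate $d\lfloor e_i/d\rfloor \geq e_i - d + 1$, the sum over $i$, and the pigeonhole step are otherwise identical. It is a modest simplification of the paper's route, and arguably cleaner to read since the constant $n(2-d-g)-1$ appears in one place rather than passing through an intermediate ring.

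Your concluding remark about the multimetric case is correct and worth flagging as a strengthening of Theorem \ref{4.7}. For an affine domain $\mathcal{N}_j = |K^\times|_j = q^{d_j\Z}$ is already a discrete, hence closed, subgroup of $\R^{>0}$, so the definition of $c_M$ forces every $\epsilon_{ij}$ to lie in $q^{d_j\Z}$ and hence $e_{ij} = \log_q \epsilon_{ij} \in d_j\Z$. Then $\lfloor e_{ij}/d_j \rfloor = e_{ij}/d_j$ exactly, the $(d_j - 1)$ granularity loss per $(i,j)$ disappears, and the argument of Theorem \ref{4.7} gives $c_M(R,n) \geq n(1-g) - 1$, which strictly improves the stated $n(m + 1 - \sum_j d_j - g) - 1$ whenever some $d_j > 1$.
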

\begin{proof} 
We will show that if $m = 1$ and $\deg \infty_1 = d$, then $c(R,n) \geq n(2-d-g) -1$.  By Proposition \ref{2.7}, this suffices.  Let $\Lambda \subset R^n$ be an integral $R$-lattice such that 
\[ \covol \Lambda \leq n(2-d-g)-1 + \sum_{i=1}^n e_i. \]
We must show there is $x = (x_1,\ldots,x_n) \in \Lambda^{\bullet}$ with 
$\deg x_i \leq e_i$ for all $1 \leq i \leq n$.  By Lemma \ref{1.14}, 
$\covol \Lambda = \dim_k R^n/\Lambda$.   Consider the $k$-vector space
\[ \mathcal{B} = \{(x_1,\ldots,x_n) \in R^n \ | \ \deg x_i \leq e_i\}. \]
By Proposition \ref{1.12}, for $x \in K$, $\deg x = -d \ord_{\infty}(x)$, so for $x \in R$, 
\[ \deg x \leq e_i \iff -d \ord_{\infty}(x) \leq e_i \iff 
\ord_{\infty}(x) \geq \lceil - \frac{e_i}{d} \rceil = - \lfloor \frac{e_i}{d} \rfloor. \]
Thus $\{ x \in R \ | \ \deg x \leq e_i\}$ is precisely the 
Riemann-Roch space $\mathcal{L}( (\lfloor \frac{e_i}{d} \rfloor) \infty)$, so 
by Riemann-Roch its dimension is at least $d \lfloor \frac{e_i}{d} \rfloor - g + 1$.  Therefore 
\[ \dim+k \mathcal{B} \geq \sum_{i=1}^n \left(d \lfloor \frac{e_i}{d} \rfloor - g + 1 \right) \geq d \sum_{i=1}^n\left( \frac{e_i}{d} - \frac{d-1}{d} \right) - ng + n = 
n(2-d-g) + \sum_{i=1}^n e_i. \]
It follows that
\[\dim_k \mathcal{B} > n(2-d-g)-1 + \sum_{i=1}^n e_i \geq \covol \Lambda = 
\dim_k R^n/\Lambda, \]
so by the Linear Algebraic Pigeonhole Principle $\mathcal{B} \cap \Lambda^{\bullet} \neq \varnothing$.  
\end{proof}

\begin{thm}
\label{4.7}
 Let $k$ be a field and $C/k$ be a nice projective curve of genus $g$, and let $C^{\circ} = C \setminus \{\infty_1,\ldots,\infty_m\}$ be the affine curve obtained by removing the given $m$ closed points, of degrees $d_1,\ldots,d_m$.   Let $R = k[C^{\circ}]$ be its affine 
coordinate ring.   Then $R$ is of multimetric linear type, and 
\[ c_M(R,n) \geq n(m+1 - \sum_{j=1}^m d_j - g) - 1.\]
\end{thm}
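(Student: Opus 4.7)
The plan is to mimic the proof of Theorem \ref{4.5} but using Riemann--Roch for a divisor supported at all of $\infty_1,\ldots,\infty_m$ simultaneously, rather than reducing to the single-place case. As in Theorem \ref{4.5}, the result to prove is equivalent (via the multinormed analogue of Proposition \ref{EQUIVALENCEPROP}) to a multinormed linear congruential statement: given an integral $R$-lattice $\Lambda \subset R^n$ and nonnegative integers $e_{ij}$ for $1 \leq i \leq n$, $1 \leq j \leq m$ with
\[ \covol \Lambda \leq n(m+1 - \sum_{j=1}^m d_j - g) - 1 + \sum_{i,j} e_{ij}, \]
I must produce $x = (x_1,\ldots,x_n) \in \Lambda^\bullet$ with $\deg_j x_i \leq e_{ij}$ for every $i,j$.

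The next step is to translate each elementwise inequality into a pole-order condition. By Proposition \ref{1.12}, $\deg_j y = -d_j \ord_{\infty_j}(y)$ for $y \in R$, so $\deg_j x_i \leq e_{ij}$ is equivalent to $\ord_{\infty_j}(x_i) \geq -\lfloor e_{ij}/d_j \rfloor$. Thus the set of admissible $i$-th coordinates is exactly the Riemann--Roch space
\[ \mathcal{L}(D_i), \qquad D_i = \sum_{j=1}^m \lfloor e_{ij}/d_j \rfloor \, \infty_j, \]
and the $k$-vector space $\mathcal{B}$ of admissible $x \in R^n$ factors as $\mathcal{B} = \bigoplus_{i=1}^n \mathcal{L}(D_i)$.

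Riemann--Roch then provides the dimension estimate. For each $i$, $\dim_k \mathcal{L}(D_i) \geq \deg D_i + 1 - g = \sum_j d_j \lfloor e_{ij}/d_j \rfloor + 1 - g$, so using $d_j \lfloor e_{ij}/d_j \rfloor \geq e_{ij} - (d_j - 1)$,
\[ \dim_k \mathcal{B} \geq \sum_{i,j}\bigl(e_{ij} - (d_j-1)\bigr) + n(1-g) = \sum_{i,j} e_{ij} + n\bigl(m+1-\sum_{j=1}^m d_j - g\bigr). \]
Combining with the hypothesis on $\covol \Lambda$ and invoking Lemma \ref{1.14} gives $\dim_k \mathcal{B} > \covol \Lambda = \dim_k R^n/\Lambda$, so the Linear Algebraic Pigeonhole Principle (Lemma \ref{LAPP}) yields a nonzero element of $\mathcal{B} \cap \Lambda$, which is the desired $x$.

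The only genuinely new bookkeeping, compared to Theorem \ref{4.5}, is handling the floor terms across all $m$ places at once; this is where the $n(m+1-\sum_j d_j - g)$ rather than $n(2-d-g)$ arises, and it is routine once the Riemann--Roch divisor is chosen to have independent contributions $\lfloor e_{ij}/d_j \rfloor$ at each $\infty_j$. No obstacle is anticipated beyond checking that the multinormed version of Proposition \ref{EQUIVALENCEPROP} (promised in the paragraph after the definition of $C'_M(R,n)$) indeed licenses the reduction to integral lattices and $M = I$.
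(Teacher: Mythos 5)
Your proposal matches the paper's proof essentially line-for-line: reduce via the multinormed linear congruential equivalence to integral $\Lambda$ and $M = I$, identify each coordinate constraint space as the Riemann--Roch space $\mathcal{L}\left(\sum_j \lfloor e_{ij}/d_j\rfloor \infty_j\right)$, bound its dimension using Riemann's inequality and $d_j\lfloor e_{ij}/d_j\rfloor \geq e_{ij}-(d_j-1)$, and finish with Lemma \ref{1.14} and the Linear Algebraic Pigeonhole Principle. The arithmetic leading to $n(m+1-\sum_j d_j - g)$ checks out, so this is correct and takes the same route as the paper.
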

\begin{proof}
Let $\Lambda \subset R^n$ be an integral $R$-lattice such that 
\[ \covol \Lambda \leq n(m+1-\sum_{j=1}^md_j-g)-1 + \sum_{i,j} e_{ij}. \]
We must show there is $x = (x_1,\ldots,x_n) \in \Lambda^{\bullet}$ with 
$\deg_j x_i \leq e_{ij}$ for all $i,j$.  By Lemma \ref{1.14}, 
$\covol \Lambda = \dim_k R^n/\Lambda$.  Consider
the $k$-vector spaces 
\[\forall 1 \leq i \leq n, \ \mathcal{B}_i = \{x_i \in R \ | \ \forall 1 \leq j \leq m, \ \deg_j x_i \leq e_{ij}\}, \]
\[ \mathcal{B} = \{(x_1,\ldots,x_n) \in R^n \ | \ \deg_j x_i \leq e_{ij}\} = \prod_{i=1}^n \mathcal{B}_i.\]
By Proposition \ref{1.12}, for $x_i \in K$, \[\deg_j x_i \leq e_{ij} \iff 
-d_j \ord_{\infty_j} x_i \leq e_{ij} \iff \ord_{\infty_j} x_j \geq \frac{-e_{ij}}{d_j}. \]
Thus $\mathcal{B}_i$ is the Riemann-Roch space $\mathcal{L}(\sum_{j=1}^m 
\lfloor \frac{e_{ij}}{d_j} \rfloor \infty_j)$.  By Riemann-Roch, 
\[ \dim_k \mathcal{B}_i \geq \sum_{j=1}^m d_j \lfloor \frac{e_{ij}}{d_j} \rfloor - g+1 \]
\[ \geq \sum_{j=1}^m d_j\left(\frac{e_{ij}}{d_j} - \frac{d_{j-1}}{d_j} \right) - 
g+ 1 = 
\sum_{j=1}^m e_{ij} - \sum_{j=1}^m d_j + m - g + 1. \]
It follows that
\[\dim_k \mathcal{B} > \sum_{i,j} e_{ij} + n(m+1-\sum_{j=1}^m d_j - g) - 1 \geq 
\covol \Lambda, \]
so by the Linear Algebraic Pigeonhole Principle $\mathcal{B} \cap \Lambda^{\bullet} \neq \varnothing$.  
\end{proof}

\section{Blichfeldt, Minkowski and Hasse Domains}

\subsection{Abstract Blichfeldt and Minkowski Theorems}

\begin{prop}(Measure Theoretic Pigeonhole Principle)
Let $(X,\mu)$ be a measure space, $\{S_i\}_{i \in I}$ a countable family of measurable subsets of $X$, $m \in \N$.  If 
\begin{equation}
\label{MPHP}
\sum_{i \in I} \mu(S_i) > m \mu(\bigcup_{i \in I} S_i), 
\end{equation}
then there is $x \in X$ with $\# \{i \in I \ | \ x \in S_i\} > m$.
\end{prop}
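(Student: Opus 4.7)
The plan is to recast the statement as a computation of $\int_X f\, d\mu$, where $f$ is the counting function $f(x) := \#\{i \in I : x \in S_i\}$, and then argue by contradiction.

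First I would set $f(x) = \sum_{i \in I} \mathbf{1}_{S_i}(x)$, taking values in $\N \cup \{\infty\}$. Because $I$ is countable and each $S_i$ is measurable, $f$ is a well-defined nonnegative measurable function, being a countable sum of measurable indicators. Next I would apply the monotone convergence theorem (or, equivalently, Tonelli's theorem for the product of $\mu$ with counting measure on $I$) to obtain
\[ \int_X f\, d\mu = \sum_{i \in I} \int_X \mathbf{1}_{S_i}\, d\mu = \sum_{i \in I} \mu(S_i). \]
On the other hand, $f$ vanishes identically on the complement of $U := \bigcup_{i \in I} S_i$.

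Now suppose for contradiction that for every $x \in X$ we have $f(x) \leq m$. Then $f \leq m \cdot \mathbf{1}_U$ pointwise, so integrating gives
\[ \sum_{i \in I} \mu(S_i) = \int_X f\, d\mu \leq m \int_X \mathbf{1}_U\, d\mu = m \mu(U), \]
directly contradicting the hypothesis (\ref{MPHP}). Hence there exists $x \in X$ with $f(x) > m$, which is exactly the desired conclusion.

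There is essentially no obstacle here: the argument is a one-line integration once the counting function $f$ is introduced. The only point meriting care is the passage $\int \sum = \sum \int$, which requires only countability of $I$ and nonnegativity of the integrand, and the observation that the contrapositive requires the trivial bound $f \leq m \mathbf{1}_U$ rather than any finer estimate. No hypothesis beyond measurability and countability is needed, and in particular $\mu(U)$ may be infinite (in which case the conclusion of the proposition would hold vacuously because the right-hand side of (\ref{MPHP}) is $+\infty$ unless $m = 0$, and the $m=0$ case is immediate from $\sum \mu(S_i) > 0$).
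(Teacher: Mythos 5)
Your proof is correct and takes essentially the same approach as the paper: both define the counting function $f = \sum_{i \in I} \mathbf{1}_{S_i}$ and derive the conclusion from $\int_X f\,d\mu = \sum_i \mu(S_i) > m\,\mu(\bigcup_i S_i)$. The only difference is cosmetic: the paper first reduces (WLOG) to the case $X = \bigcup_i S_i$ with $\mu(X) > 0$ and no point lying in infinitely many $S_i$, whereas you dispatch those cases directly by noting $f$ vanishes off the union and by spelling out the Tonelli/MCT step and the infinite-measure edge case.
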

\begin{proof}
By replacing $X$ with $\bigcup_{i \in I} S_i$ we may assume that 
$\bigcup_{i \in I} S_i = X$.  Further, it is no loss of generality 
to assume that $\mu(X) > 0$ and that no $x \in X$ lies in infinitely 
many of the sets $S_i$: indeed, in the former case the hypothesis does 
not hold and in the latter case the conclusion holds.  \\ \indent
For a subset $S \subset X$, denote by $1_S$ the associated characteristic function: $1_S(x) = 1$ if $x \in S$, and otherwise $1_S(x) = 0$.  Put 
\[ f = \sum_{i \in I} 1_{S_i}.\]
For any $x \in X$, $f(x) = \# \{i \in I \ | \ x \in S_i\}$, so $f: X \ra \R$ is a measurable function.  The condition (\ref{MPHP}) can be reexpressed as \[\int_X f d\mu > m \int_X d \mu, \]
so we must have $\# \{i \in I \ | \ x \in S_i\}= f(x) > m$ for at least one $x \in X$. 
\end{proof}
\noindent
A \textbf{measured group} $(G,+,\mathcal{A},\mu)$ is a group $(G,+$) -- not assumed to be commutative, though we write the group law additively -- and a 
measure $(G,\mathcal{A},\mu)$ which is right invariant: for all $A \in \mathcal{A}$ and $x \in G$, $\mu(A+x) = \mu(A)$.  To avoid trivialities, we 
assume $\mu(G) > 0$.  
\\ \\
Let $\Gamma$ be a subgroup of $G$.  A \textbf{fundamental domain} $\mathcal{F}$ 
for $\Gamma$ in $G$ is a measurable subset $\mathcal{F} \subset G$ such that \\
(FD1) $\bigcup_{g \in \Gamma} \mathcal{F} + g = \Gamma$, and \\
(FD2) Fo all $g_1,g_2 \in \Gamma$, $\mu( (\mathcal{F}+g_1) \cap (\mathcal{F}+g_2)) = 0$.  

\begin{lemma}
If $\mathcal{F}_1$ and $\mathcal{F}_2$ are both fundamental domains for a countable subgroup $\Gamma$ in $G$, then $\mu(\mathcal{F}_1) = \mu(\mathcal{F}_2)$.
\end{lemma}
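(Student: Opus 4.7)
The plan is a standard cross-decomposition: I would slice each fundamental domain by the translates of the other, then use right-invariance of $\mu$ to identify the two resulting expressions. Specifically, by (FD1) applied to $\mathcal{F}_2$, the translates $\{\mathcal{F}_2 + \gamma\}_{\gamma \in \Gamma}$ cover $G$, and by (FD2) all their pairwise intersections are $\mu$-null. Since $\Gamma$ is countable, countable subadditivity plus vanishing of the overlaps yields
\[ \mu(\mathcal{F}_1) = \sum_{\gamma \in \Gamma} \mu\bigl(\mathcal{F}_1 \cap (\mathcal{F}_2 + \gamma)\bigr). \]

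The key step is then to apply right-invariance. Translating each summand on the right by $-\gamma$ gives
\[ \mu\bigl(\mathcal{F}_1 \cap (\mathcal{F}_2 + \gamma)\bigr) = \mu\bigl((\mathcal{F}_1 + (-\gamma)) \cap \mathcal{F}_2\bigr), \]
and reindexing $\gamma' = -\gamma$ converts the sum to
\[ \mu(\mathcal{F}_1) = \sum_{\gamma' \in \Gamma} \mu\bigl((\mathcal{F}_1 + \gamma') \cap \mathcal{F}_2\bigr). \]
Running the identical argument with the roles of $\mathcal{F}_1$ and $\mathcal{F}_2$ interchanged (i.e.\ slicing $\mathcal{F}_2$ by the $\Gamma$-translates of $\mathcal{F}_1$) produces the same right-hand side for $\mu(\mathcal{F}_2)$, yielding $\mu(\mathcal{F}_1) = \mu(\mathcal{F}_2)$.

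The only subtlety is the initial displayed equality: passing from an essentially-disjoint countable cover to a true additivity statement. This is routine—one removes the countable union of null pairwise intersections from $\mathcal{F}_1$ to obtain a genuine disjoint decomposition, and countable additivity of $\mu$ does the rest. I do not anticipate any serious obstacle; noncommutativity of $G$ is harmless because right-invariance is exactly what is needed when translating sets of the form $\mathcal{F}_2 + \gamma$.
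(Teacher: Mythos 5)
Your proposal is correct and follows essentially the same route as the paper: slice one fundamental domain by the $\Gamma$-translates of the other, use countable additivity on the essentially disjoint pieces, apply right-invariance and reindex by $\gamma \mapsto -\gamma$, and conclude by symmetry. The only cosmetic difference is that you obtain the equality directly by observing that the final sum is symmetric in $\mathcal{F}_1, \mathcal{F}_2$, whereas the paper first derives $\mu(\mathcal{F}_1) \geq \mu(\mathcal{F}_2)$ and then interchanges the two to get the reverse inequality.
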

\begin{proof}
Observe that if $\{\mathcal{S}_i\}_{i \in I}$ is a countable family 
of subsets such that $\mu(S_i \cap S_j) = 0$ for all $i \neq j$, then 
\[\mu (\bigcup_{i \in I} S_i) = \sum_{i \in I} \mu(S_i).\]
Now we have
\[ \mathcal{F}_1 \supset \mathcal{F}_1 \cap (\bigcup_{g \in \Gamma} \mathcal{F}_2 + g) = \bigcup_{g \in \Gamma} \mu( \mathcal{F}_1 \cap (\mathcal{F}_2)+g), \]
so, using the above observation, 
\[ \mu(\mathcal{F}_1) \geq \sum_{h \in H} \mu(\mathcal{F}_1 \cap (\mathcal{F}_2 + g)) = \sum_{g \in \Gamma} \mu(\mathcal{F}_1 \cap (\mathcal{F}_2 - g)) = 
\sum_{g \in \Gamma} \mu((\mathcal{F}_1+g) \cap \mathcal{F}_2) \]
\[ = \mu( \bigcup_{g \in \Gamma} (\mathcal{F}_1 + g) \cap \mathcal{F}_2) = \mu(\mathcal{F}_2). \]
Interchanging $\mathcal{F}_1$ and $\mathcal{F}_2$ we get the result.
\end{proof}
\noindent
A subgroup $\Lambda$ of a measured group $G$ is a \textbf{lattice} 
if it is countable and admits a measurable fundamental domain of finite measure.  
We define the \textbf{covolume} $\Covol \Lambda$ to be the measure of any such 
fundamental domain.  Note that our assumption $\mu(G) > 0$ implies $\Covol \Lambda > 0$.  

\begin{thm}(Abstract Blichfeldt Lemma)
\label{ABSTRACTBLICHFELDT}
Let $\Lambda$ be a lattice in a measured group $G$, and let $M \in \Z^+$.  
Let $\Omega \subset G$ be measurable, and suppose 
\begin{equation}
\frac{ \mu(\Omega)}{\Covol \Lambda} > M.
\end{equation}
There are distinct $w_1,\ldots,w_{M+1} \in \Omega$ such that for all 
$1 \leq i,j \leq M+1$, $w_i - w_j \in \Lambda$.
\end{thm}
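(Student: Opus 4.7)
The plan is to reduce to the Measure Theoretic Pigeonhole Principle using the standard Blichfeldt trick: slice $\Omega$ by $\Lambda$-translates of a fundamental domain and push back into the fundamental domain.

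First, fix a measurable fundamental domain $\mathcal{F}$ for $\Lambda$ in $G$, so $\mu(\mathcal{F}) = \Covol \Lambda$. For each $\gamma \in \Lambda$, define
\[ S_\gamma \;=\; \mathcal{F} \cap (\Omega - \gamma), \]
so that $S_\gamma + \gamma = (\mathcal{F} + \gamma) \cap \Omega$ and, by right invariance of $\mu$, $\mu(S_\gamma) = \mu((\mathcal{F}+\gamma) \cap \Omega)$. Since $\Lambda$ is countable, the axioms (FD1) and (FD2) give $\Omega = \bigcup_{\gamma \in \Lambda}(\mathcal{F}+\gamma) \cap \Omega$ with pairwise intersections of measure zero, so $\sum_{\gamma \in \Lambda} \mu(S_\gamma) = \mu(\Omega)$.

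Next, observe every $S_\gamma$ sits inside $\mathcal{F}$, so $\bigcup_{\gamma} S_\gamma \subset \mathcal{F}$ and $\mu(\bigcup_\gamma S_\gamma) \leq \Covol \Lambda$. The hypothesis therefore gives
\[ \sum_{\gamma \in \Lambda} \mu(S_\gamma) \;=\; \mu(\Omega) \;>\; M \cdot \Covol \Lambda \;\geq\; M \cdot \mu\!\left(\bigcup_{\gamma \in \Lambda} S_\gamma\right). \]
Applying the Measure Theoretic Pigeonhole Principle to this countable family produces a point $x \in \mathcal{F}$ belonging to more than $M$ of the sets $S_\gamma$, i.e. to at least $M+1$ of them; pick distinct $\gamma_1, \ldots, \gamma_{M+1} \in \Lambda$ with $x \in S_{\gamma_i}$ for each $i$.

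Finally, set $w_i := x + \gamma_i \in \Omega$. Distinctness of the $\gamma_i$ forces distinctness of the $w_i$, and for all $i,j$ we have $w_i - w_j = \gamma_i - \gamma_j \in \Lambda$, completing the proof. The only technical point I would pay attention to is ensuring that the measure-zero overlaps between the translates $\mathcal{F} + \gamma$ cause no loss in the identity $\sum_\gamma \mu(S_\gamma) = \mu(\Omega)$; this is exactly the countability hypothesis on $\Lambda$ combined with (FD2), and it is what makes the conversion between Blichfeldt and the Measure Theoretic Pigeonhole Principle clean. No additional structure on $G$ (e.g. topology or commutativity) is needed.
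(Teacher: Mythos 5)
Your proof is correct and essentially the same as the one in the paper: you decompose $\Omega$ along translates of a fundamental domain, push back into $\mathcal{F}$, and invoke the Measure Theoretic Pigeonhole Principle; indeed your sets $S_\gamma = \mathcal{F} \cap (\Omega - \gamma)$ coincide with the paper's $\Omega_x - x = (\Omega \cap (\mathcal{F}+x)) - x$. The only cosmetic difference is that you make explicit the harmless bound $\mu(\bigcup_\gamma S_\gamma) \leq \mu(\mathcal{F}) = \Covol\Lambda$ needed to match the exact hypothesis of the pigeonhole lemma.
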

\begin{proof}
Let $\mathcal{F}$ be a measurable fundamental domain for $\Lambda$ in $G$.  
For $x \in \Lambda$, let 
\[ \Omega_x = \Omega \cap (\mathcal{F} + x). \]
Then $\Omega = \bigcup_{x \in \Gamma} \Omega_x$: this is a countable union 
which is essentially pairwise disjoint -- for all $x \neq y \in \Gamma$, 
$\mu(\Omega_x \cap \Omega_y) = 0$ -- so  
\begin{equation}
\label{ABBLICHEQ}
\sum_{x \in \Gamma} \mu(\Omega_x - x) = \sum_{x \in \Lambda} \mu(\Omega_x) = 
\mu(\Omega) > M \Covol(\Lambda) = M \mu(\mathcal{F}). 
\end{equation}
We apply the Measure Theoretic Pigeonhole Principle with $X = \mathcal{F}$, 
$I = \Lambda$, $S_x = \Omega_x - x$: there is $v \in \mathcal{F}$ and 
$x_1,\ldots,x_{M+1} \in \Lambda$ such that \[v \in \bigcap_{i=1}^{M+1} \Omega_{x_i} - x_i. \]
Thus for $1 \leq i \leq  M+1$ there is $w_i \in \Omega_{x_i}$ -- so $w_1,\ldots,w_{M+1}$ are distinct -- with
\[ \forall 1 \leq i \leq M+1, w_i - x_i = v. \]
It follows that for all $1 \leq i,j \leq M+1$, $w_i - w_j = (x_i+v) - (x_j+v) = 
x_i - x_j \in \Lambda$.  
\end{proof}

\begin{remark}
When $\mu$ is the counting measure on $G$, we essentially recover the Group Theoretic Pigeonhole Principle: precisely, the special case in which $\Gamma$ is countable and $\kappa$ is finite.
\end{remark}
\noindent
A \textbf{measured ring} is a ring endowed with a measure such that the 
additive group of $R$ is a measured group.  Again we assume $\mu(R) > 0$ 
to avoid trivialities.  

\begin{thm}(Abstract Minkowski Theorem)
\label{AMCBT}
\label{5.5}
Let $M\in \Z^+$, $(R,+,\cdot,\mathcal{A},\mu)$ be a measured ring, and let $\Lambda \subset R^N$ be a countable subgroup.  Let $\Omega \subset R$ be 
measurable and symmetric: $x \in \Omega \implies -x \in \Omega$.  \\
a) We suppose $2 \in R^{\bullet}$ and all of the following: \\
$\bullet$ $\Omega$ is \textbf{midpoint closed}: $x,y \in \Omega \implies \frac{x+y}{2} \in \Omega$.  \\
$\bullet$ $2 \Lambda$ is a lattice in $R$.  \\
$\bullet$ $\frac{\mu(\Omega)}{\Covol 2\Lambda} > M$.  \\
Then $\# (\Omega \cap \Lambda^{\bullet}) \geq M$.  \\
b) We suppose all of the following: \\
$\bullet$ $\Omega$ is closed under subtraction: $x,y \in \Omega \implies x-y \in \Omega$.  \\
$\bullet$ $\Lambda$ is a lattice in $R$. \\
$\bullet$ $\frac{\mu(\Omega)}{(\Covol\Lambda} > M$.  \\
Then  $\# (\Omega \cap \Lambda^{\bullet}) \geq M$. 
\end{thm}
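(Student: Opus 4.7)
The plan is to derive both statements from the Abstract Blichfeldt Lemma (Theorem~\ref{ABSTRACTBLICHFELDT}). Part b) follows from a single direct application, while part a) mimics the classical ``halving'' trick in the usual proof of Minkowski's theorem, with the scaling of $\Omega$ by $\tfrac{1}{2}$ replaced by the scaling of $\Lambda$ by $2$ --- which is precisely why $\Covol(2\Lambda)$ appears in the hypothesis rather than $2^n \Covol(\Lambda)$.

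For part b), I would apply Abstract Blichfeldt to $\Omega$ with the lattice $\Lambda$: the hypothesis $\mu(\Omega)/\Covol(\Lambda) > M$ produces distinct $w_1,\ldots,w_{M+1} \in \Omega$ with $w_i - w_j \in \Lambda$ for all $i,j$. For each $i \in \{2,\ldots,M+1\}$ the element $w_i - w_1$ lies in $\Lambda$ by construction and in $\Omega$ by closure under subtraction, and these $M$ elements are distinct and nonzero. Hence $\#(\Omega \cap \Lambda^{\bullet}) \geq M$.

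For part a), I would apply Abstract Blichfeldt to $\Omega$ with the lattice $2\Lambda$: the hypothesis $\mu(\Omega)/\Covol(2\Lambda) > M$ produces distinct $w_1,\ldots,w_{M+1} \in \Omega$ with $w_i - w_j \in 2\Lambda$ for all $i,j$. For each $i \geq 2$, write $w_i - w_1 = 2 y_i$ with $y_i \in \Lambda$; since $R$ is a domain and $2 \in R^{\bullet}$, the element $y_i$ is uniquely determined. By symmetry $-w_1 \in \Omega$, and by midpoint closure
\[ y_i \;=\; \frac{w_i + (-w_1)}{2} \;\in\; \Omega, \]
so $y_i \in \Omega \cap \Lambda$. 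Because multiplication by $2$ is injective on $R$ and the $w_i$ are distinct, the $y_i$ are distinct nonzero elements, producing $M$ distinct elements of $\Omega \cap \Lambda^{\bullet}$. The only point requiring a moment's care is the meaning of midpoint closure in a ring where $2$ need not be invertible: one must read $\tfrac{x+y}{2}$ as the unique $z \in R$ with $2z = x+y$ when such a $z$ exists, which it does for the pair $(w_i,-w_1)$ precisely because $w_i + (-w_1) = 2 y_i \in 2R$. I do not anticipate any real obstacle beyond pinning down this convention.
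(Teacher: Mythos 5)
Your proof is correct and follows essentially the same route as the paper's: apply the Abstract Blichfeldt Lemma to $\Omega$ with lattice $2\Lambda$ (for part a) or $\Lambda$ (for part b), then use symmetry plus midpoint closure, respectively closure under subtraction, to convert the differences into elements of $\Omega \cap \Lambda^{\bullet}$. The one small addition you make --- pinning down precisely what $\frac{x+y}{2}$ means when $2$ is merely a nonzero element of a domain rather than a unit, and noting that injectivity of multiplication by $2$ is what propagates distinctness from the $w_i$ to the $y_i$ --- is a sensible clarification that the paper leaves implicit.
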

\begin{proof}
a) Apply the Abstract Blichfeldt Lemma with $G = (R,+)$ and $2\Lambda$ in place 
of $\Lambda$.  We get distinct elements $w_1,\ldots,w_{M+1} \in \Omega$ such that for all $1 \leq i,j \leq M+1$, $\frac{w_i - w_j}{2} \in \Lambda$.  Since 
$\Omega$ is symmetric and midpoint closed, $-w_j \in \Omega$ and thus $\frac{w_i-w_j}{2} \in \Omega$ for all $1 \leq i,j \leq M+1$.  Fixing $i = 1$ 
and letting $j$ run from $2$ to $M+1$ gives us $M$ nonzero elements of $\Omega \cap \Lambda$.  \\
b) This is exactly the same as part a) except we use $\Lambda$ instead of $2\Lambda$ and use the fact that $\Omega$ is closed under subtraction.
\end{proof}

\begin{cor}(Minkowski Convex Body Theorem)
Let $\Omega \subset \R^n$ be symmetric and convex, and let $\Lambda \subset \R^n$ be a lattice.  If $\Vol \Omega > 2^n \Covol \Lambda$, then $\Omega \cap \Lambda^{\bullet} \neq \varnothing$.
\end{cor}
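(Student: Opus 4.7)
The plan is to derive the Minkowski Convex Body Theorem as an immediate corollary of Theorem \ref{AMCBT}a), applied with the measured ring $(\R,+,\cdot,\mathcal{A},\mu)$ where $\mu$ is Lebesgue measure, with $N = n$, with the given lattice $\Lambda \subset \R^n$ (so $\Lambda$ is a countable subgroup of the measured group $\R^n$), and with $M = 1$.

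First I would verify the three hypotheses of Theorem \ref{AMCBT}a). Symmetry of $\Omega$ is given directly. Midpoint closure is the only place where convexity enters: for $x,y \in \Omega$ we have
\[
 \frac{x+y}{2} = \tfrac{1}{2} x + \tfrac{1}{2} y \in \Omega
\]
by convexity. Next, $2\Lambda$ is a lattice in $\R^n$: if $\mathcal{F}$ is a measurable fundamental domain for $\Lambda$, then $2\mathcal{F}$ is one for $2\Lambda$, and by the scaling behavior of Lebesgue measure $\Covol(2\Lambda) = \Vol(2\mathcal{F}) = 2^n \Vol(\mathcal{F}) = 2^n \Covol(\Lambda)$. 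Finally, the hypothesis $\Vol \Omega > 2^n \Covol \Lambda$ rewrites as $\mu(\Omega)/\Covol(2\Lambda) > 1 = M$.

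Theorem \ref{AMCBT}a) then produces at least one element of $\Omega \cap \Lambda^{\bullet}$, which is exactly the conclusion. There is no real obstacle: all of the substance (the measure-theoretic pigeonhole step, the symmetry-plus-midpoint-closure trick that recovers the factor of $2$) has already been absorbed into the Abstract Blichfeldt Lemma and the Abstract Minkowski Theorem, so the only task left is to check that the Euclidean hypotheses specialize correctly, with the $2^n$ arising entirely from the scaling $\Covol(2\Lambda) = 2^n \Covol(\Lambda)$.
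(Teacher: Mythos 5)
Your proof is correct and is essentially identical to the paper's own argument: the paper likewise observes that convexity gives midpoint closure, that $\Covol(2\Lambda) = 2^n \Covol \Lambda$, and then invokes Theorem \ref{AMCBT}a). Your write-up spells out the hypothesis-checking slightly more fully, but there is no substantive difference in approach.
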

\begin{proof}
A convex subset is midpoint closed.  Also $\Covol (2\Lambda) = 2^n \Covol \Lambda$.  Now apply Theorem \ref{5.5}a).
\end{proof}

\begin{cor}(Chonoles Convex Body Theorem \cite{Chonoles})
Let $\mathcal{R} =  \F_q((\frac{1}{t}))^n$ and let $\Lambda$ be an $\F_q[t]$-lattice in $R$.  If $\Omega \subset \mathcal{R}$ is closed under subtraction and satisfies $\Vol \Omega > \Covol \Lambda$, then $\Omega \cap \Lambda^{\bullet} \neq 0$.
\end{cor}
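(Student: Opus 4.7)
The plan is to deduce the Chonoles Convex Body Theorem as a direct specialization of the Abstract Minkowski Theorem, Theorem \ref{5.5}(b). The work thus consists in verifying that the hypotheses of Theorem \ref{5.5}(b) are met and then invoking it with $M = 1$.

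First I would set up $\mathcal{R} = \F_q((1/t))^n$ as a measured ring: equip each factor $\F_q((1/t))$ with its natural Haar measure (the one for which the unit ball $\F_q[[1/t]]$ has measure $1$, say), and take the product measure on $\mathcal{R}$, which is the Haar measure with respect to which $\Vol$ is computed. The additive group $(\mathcal{R},+)$ is a locally compact abelian group, so its Haar measure is translation-invariant, and multiplication is continuous and distributes over addition, making $\mathcal{R}$ a measured ring in the sense of Section 5. Clearly $\mu(\mathcal{R}) > 0$.

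Next I would verify that an $\F_q[t]$-lattice $\Lambda \subset \mathcal{R}$ is a lattice in the measure-theoretic sense. It is countable because $\F_q[t]$ is countable and $\Lambda$ is finitely generated. Moreover it is discrete and cocompact in $\mathcal{R}$ (this is the function-field analogue of $\Z_K$ sitting inside $\prod_v K_v$, and is standard in the theory of adelic function fields), and hence admits a measurable fundamental domain of finite measure; that finite measure is precisely $\Covol \Lambda$ by definition. So $\Lambda$ meets the lattice hypothesis of Theorem \ref{5.5}.

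With these preliminaries the verification is immediate: $\Omega$ is closed under subtraction by hypothesis, $\Lambda$ is a lattice in $\mathcal{R}$, and the volume inequality $\Vol \Omega > \Covol \Lambda$ is exactly $\frac{\mu(\Omega)}{\Covol \Lambda} > 1$, i.e. the hypothesis of Theorem \ref{5.5}(b) with $M = 1$. Applying that theorem produces a nonzero element of $\Omega \cap \Lambda$, which is the desired conclusion. The substantive point is conceptual rather than technical: in positive characteristic one cannot in general divide by $2$, so the classical route (symmetry plus midpoint closure, as in the Minkowski Convex Body Theorem) is unavailable; the hypothesis that $\Omega$ is closed under subtraction is precisely the replacement that lets one invoke the $M=1$ case of part (b) instead of part (a), and this is where the real content of the framework developed in Section 5 is used.
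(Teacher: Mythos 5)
Your proof is correct and is precisely the route the paper intends: the corollary appears immediately after the Abstract Minkowski Theorem with no separate argument, because it is exactly Theorem~\ref{5.5}b) specialized to $M = 1$, once one observes that an $\F_q[t]$-lattice in $\F_q((\frac{1}{t}))^n$ is countable and admits the standard measurable fundamental domain $\bigoplus_j \mm_\infty v_j$ of finite measure. Your closing remark correctly identifies the conceptual role of the closed-under-subtraction hypothesis as the non-Archimedean replacement for symmetry plus midpoint closure, though the obstruction to using part a) is not solely characteristic $2$: for $q$ odd one \emph{can} divide by $2$, but part b) is still the right tool because it avoids the spurious factor $2^n$ and because the natural analogue of a symmetric convex body in the ultrametric setting is an additive subgroup, which is automatically closed under subtraction.
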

\noindent
Let $R = \Z_{K,S}$ be an $S$-integer ring in a number field $K$.  Let $\mathcal{R} = \prod_{v \mid \infty \cup v \in S} K_v$.  This is a locally 
compact ring.  We endow it with the product of the Haar measures on each factor, where each factor isomorphic to $\R$ gets the standard Lebesgue measure, each factor isomorphic to $\C$ gets twice the standard Lebesgue measure, and each 
non-Archimedean local field $K_v$ gets the Haar measure which gives its maximal compact subring $\mathcal{O}_v$ volume $1$.  It is a standard fact that $\sigma(R)$ is discrete and cocompact in $\mathcal{R}$: see e.g. \cite{Conrad}.  
Let $\mathcal{V}(R)$ denote the $\mu$-volume of a fundamental domain for $\sigma(R)$ in $\mathcal{R}$.  \\ \indent
On $\mathcal{R}^n$, let $\mu$ the product Haar measure.  Let $\Lambda \subset K^n$ be an $R$-sublattice, and let $\hat{\sigma}: K^n \ra \mathcal{R}^n$ be the natural embedding.  It follows that $\hat{\sigma}(\Lambda)$ is discrete and cocompact in $\mathcal{R}^n$, and that its covolume in the measure theoretic sense is equal to $|\chi(\Lambda)| \mathcal{V}(R)^n$.  Thus if we take $\Vol$ 
to be $\mathcal{V}(R)^{-n} \mu$, then $\Vol$ is a Haar measure on $\mathcal{R}^n$ 
such that $\Covol \Lambda$ means both the covolume in the sense of $\S$ 1.9 and 
the measure of a fundamental domain for $\Lambda$ in $\mathcal{R}^n$.


\begin{cor}
\label{5.8}
Let $\Omega \subset \mathcal{R}^n$ be a measurable subset such that 
\[ \Vol \Omega > \Covol \Lambda. \]
Then there are distinct $x,y \in \Omega$ such that $x-y \in \Lambda$.
\end{cor}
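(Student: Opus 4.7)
The plan is to apply the Abstract Blichfeldt Lemma (Theorem \ref{ABSTRACTBLICHFELDT}) directly to the measured group $(\mathcal{R}^n, +, \Vol)$ together with the subgroup $\hat{\sigma}(\Lambda) \subset \mathcal{R}^n$. Identifying $\Lambda$ with its image under $\hat{\sigma}$, the preceding paragraph supplies exactly the ingredients required by the abstract lemma.

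First I would verify that $\hat{\sigma}(\Lambda)$ is a lattice in the measure-theoretic sense of Section 5.1, i.e., that it is countable and admits a measurable fundamental domain of finite $\Vol$-measure. Countability is automatic since $\Lambda$ is finitely generated as an $R$-module. Discreteness and cocompactness of $\hat{\sigma}(\Lambda)$ in $\mathcal{R}^n$ are recalled in the paragraph preceding the statement (via the standard argument for $\sigma(R) \subset \mathcal{R}$, then taking products and using that an $R$-sublattice of $K^n$ is commensurable with $R^n$). Cocompactness and local compactness of $\mathcal{R}^n$ together with discreteness of $\hat{\sigma}(\Lambda)$ guarantee the existence of a relatively compact measurable fundamental domain $\mathcal{F}$, whose measure is finite. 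Moreover, the normalization $\Vol = \mathcal{V}(R)^{-n} \mu$ chosen in the preceding paragraph ensures that $\Vol(\mathcal{F}) = |\chi(\Lambda)| = \Covol \Lambda$, so the abstract covolume agrees with the one used in the statement.

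With this in hand, I would apply Theorem \ref{ABSTRACTBLICHFELDT} with $G = \mathcal{R}^n$, the lattice $\hat{\sigma}(\Lambda)$, the measurable set $\Omega$, and $M = 1$. The hypothesis $\Vol \Omega > \Covol \Lambda$ is precisely $\mu(\Omega)/\Covol \Lambda > 1$, so the conclusion yields distinct $w_1, w_2 \in \Omega$ with $w_1 - w_2 \in \hat{\sigma}(\Lambda)$. Pulling back through $\hat{\sigma}$ (which is injective) gives distinct $x, y \in \Omega$ (viewed inside $\mathcal{R}^n$) with $x - y \in \Lambda$, as required.

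There is essentially no obstacle beyond the bookkeeping of the covolume normalization; the only step meriting any care is matching $\mathcal{V}(R)^{-n}\mu$ to the $\Covol$ defined in Section 1.9, and this has already been carried out in the setup immediately preceding the corollary. Thus the corollary reduces to an invocation of the Abstract Blichfeldt Lemma with $M = 1$.
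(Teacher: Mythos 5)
Your proposal is correct and is exactly the argument the paper intends: the preceding paragraph is there precisely to establish that $\hat{\sigma}(\Lambda)$ is a lattice in the measure-theoretic sense with $\Vol$-covolume equal to $\Covol\Lambda$, after which Corollary~\ref{5.8} is nothing more than Theorem~\ref{ABSTRACTBLICHFELDT} with $M=1$.
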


\subsection{Hasse Domains Are of Multimetric Linear Type}
\textbf{} \\ \\ \noindent
For $z = x+yi \in \C$, recall that we have taken the normalization $|z| = x^2+y^2$.

\begin{lemma}
\label{ALGNT2}
a) When $S = \varnothing$, $\mathcal{V}(R) = |\Delta(K)|^{\frac{1}{2}}$. \\
b) Let $r,s \in \N$, $d = r+2s$, $t \in \R$, and let 
\[ B_t = \{ (y_1,\ldots,y_r,z_1,\ldots,z_s) \in \R^r \times \C^s \ | \ 
\sum_{i=1}^r |y_i| + 2 \sum_{j=1}^s |z_j|^{\frac{1}{2}} \leq t \}. \]
Then for all $t \geq 0$, \[\Vol B_t = \mathcal{V}(R)^{-n} 2^r \pi^s 
\frac{t^d}{d!}. \]
\end{lemma}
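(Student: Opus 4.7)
My plan is to handle (a) and (b) by classical Minkowski-theoretic calculations, carefully tracking two normalization conventions that are at play: the Haar measure $\mu$ assigns each $\C$-factor \emph{twice} standard Lebesgue measure, while the paper's norm satisfies $|z| = x^2 + y^2$, so the standard complex absolute value is $|z|^{1/2}$. The two factors of $2$ will conspire to cancel cleanly.

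For part (a), I will fix an integral basis $\omega_1,\ldots,\omega_d$ of $\Z_K$ and compute the $\mu$-covolume of the lattice $\sigma(\Z_K)\subset \R^r\times\C^s\cong\R^d$. Letting $M$ be the $d\times d$ real matrix whose $j$-th column lists the real coordinates of $\sigma(\omega_j)$, the standard Lebesgue covolume equals $|\det M|$. If $\tau_1,\ldots,\tau_d$ are the embeddings $K\hookrightarrow \C$ with the complex ones arranged in consecutive conjugate pairs, elementary row operations within each conjugate block give
\[
|\det(\tau_i(\omega_j))_{i,j}| = 2^s|\det M|.
\]
Since the left-hand side is $|\Delta(K)|^{1/2}$ by definition of the discriminant, $|\det M| = 2^{-s}|\Delta(K)|^{1/2}$ in standard Lebesgue measure. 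Converting to $\mu$ (which is standard Lebesgue multiplied by $2^s$ on $\R^d$) yields $\mathcal{V}(R) = |\Delta(K)|^{1/2}$.

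For part (b), I will first rewrite the defining inequality of $B_t$ using the standard complex absolute value: with $\rho_j = |z_j|^{1/2}$, it becomes $\sum_i |y_i| + 2\sum_j \rho_j \le t$. Polar coordinates $w_j = \rho_j e^{i\theta_j}$ make the $\mu$-density on each $\C$-factor equal to $2\rho_j\,d\rho_j\,d\theta_j$; integrating out the angles contributes $(2\pi)^s$, splitting $y_i = \pm v_i$ with $v_i\ge 0$ contributes $2^r$, and the substitution $u_j = 2\rho_j$ turns $2\rho_j\,d\rho_j$ into $\tfrac12 u_j\,du_j$, contributing an extra $2^{-s}$. The prefactor collects to $2^r\pi^s$, leaving the classical Dirichlet integral
\[
\int_{\substack{v_i, u_j \ge 0 \\ \sum v_i + \sum u_j \le t}} \prod_i dv_i \prod_j u_j\,du_j = \frac{\prod_i \Gamma(1)\,\prod_j \Gamma(2)}{\Gamma(r+2s+1)}\,t^{r+2s} = \frac{t^d}{d!},
\]
since $\alpha_i = 1$ for the $v_i$ and $\alpha_j = 2$ for the $u_j$. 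Passing from $\mu$ to $\Vol$ via the specified normalization on $\mathcal{R}$ then gives the formula.

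The main obstacle is purely bookkeeping: the several powers of $2$ coming from the complex Haar normalization, the squared complex norm, and polar integration must all be tracked simultaneously, and one misstep ruins the clean final constants. A secondary point is the Dirichlet integral itself, which is standard but worth verifying for self-containment by induction on the number of variables using the Beta identity $\int_0^t x^{\alpha-1}(t-x)^{\beta-1}\,dx = t^{\alpha+\beta-1}B(\alpha,\beta)$, peeling off one coordinate at a time.
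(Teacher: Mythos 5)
Your proof is correct, and it supplies the actual computation that the paper elides by citing Samuel, Ch.\ IV. The argument you give is the standard Minkowski-theoretic one; what you add, and what the citation alone would not give, is the careful tracking of the paper's two nonstandard normalizations (the Haar measure $\mu$ being twice Lebesgue on each $\C$-factor, and the squared complex norm $|z| = x^2+y^2$). Part (a): the identity $|\det(\tau_i(\omega_j))| = 2^s|\det M|$ via row operations within conjugate pairs, combined with $\mu = 2^s\cdot(\text{Lebesgue})$ on $\R^d$, is exactly right and makes the two factors of $2^s$ cancel. Part (b): your reduction to the Dirichlet integral with exponents $\alpha=1$ for the real variables and $\alpha=2$ for the $u_j$ gives $t^d/d!$, and the prefactor $(2\pi)^s\cdot 2^r\cdot 2^{-s}=2^r\pi^s$ collects correctly, so $\mu(B_t)=2^r\pi^s t^d/d!$. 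One small point to flag, which is a blemish in the paper rather than your proof: $B_t$ lives in $\mathcal{R}$, not $\mathcal{R}^n$, so passing from $\mu$ to $\Vol$ on $\mathcal{R}$ literally contributes a factor $\mathcal{V}(R)^{-1}$, not $\mathcal{V}(R)^{-n}$. The form $\mathcal{V}(R)^{-n}(\cdots)^n\epsilon_1\cdots\epsilon_n$ only appears once one takes the product over the $n$ coordinates in Theorem~\ref{NBR1}, which is how the lemma is actually invoked; your computation of $\mu(B_t)$ is the content needed there.
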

\begin{proof}
Both assertions are part of the standard application of geometry of numbers to algebraic number theory.  For proofs see e.g. \cite[Ch. IV]{Samuel}.
\end{proof} 


\begin{thm}
\label{NBR1}
\label{5.11}
Let $K$ be a number field.  Suppose $K$ has $r$ real places and $s$ complex 
places, and put $d = r+2s = [K:\Q]$.  Let $\Z_K$ be the ring of integers of $K$, and let $\Z_K \subset R \subset K$ be an overring of $R$.  Then for all 
$n \in \Z^+$, 
\begin{equation}
\label{MINKLINEQ}
 C(R,n) \geq M(K)^{-n}, 
\end{equation}
where 
\[ M(K) = \left( \frac{4}{\pi} \right)^s \frac{d!}{d^d} \left| \Delta_K \right|^{\frac{1}{2}} \]
is Minkowski's constant.
\end{thm}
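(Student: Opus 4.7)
The plan is to prove the bound first for $R = \Z_K$ (so $S = S_\infty$), and then to invoke Proposition \ref{LINEAROVERPROP} to extend it to every overring $R \supset \Z_K$. By Proposition \ref{EQUIVALENCEPROP}, it suffices to verify the linear \emph{congruential} condition with constant $M(K)^{-n}$: for any integral $\Z_K$-lattice $\Lambda \subset K^n$ and any $\epsilon_1,\dots,\epsilon_n > 0$ with $\Covol \Lambda < M(K)^{-n} \prod_i \epsilon_i$, one must produce $x \in \Lambda^{\bullet}$ satisfying $|x_i| \leq \epsilon_i$ for every $i$.

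My approach is to apply Corollary \ref{5.8} (the abstract Minkowski theorem in $\mathcal{R}^n = (\R^r \times \C^s)^n$) to the image of $\Lambda$ under $\hat{\sigma}$. Setting the radius $t_i := d\,\epsilon_i^{1/d}$ with $d = r+2s$, I form the convex symmetric Minkowski body $B_{t_i/2} \subset \mathcal{R}$ of Lemma \ref{ALGNT2}(b) and put $\Omega := \prod_{i=1}^{n} B_{t_i/2}$. Two checks are needed. First, AM--GM on the $d$ summands defining $B_{t_i}$ shows that any element of $B_{t_i}$ has canonical norm at most $(t_i/d)^d = \epsilon_i$, and the triangle inequality for the same convex seminorm gives $B_{t_i/2} - B_{t_i/2} \subset B_{t_i}$. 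Second, Lemma \ref{ALGNT2}(b), combined with $\mathcal{V}(\Z_K) = |\Delta_K|^{1/2}$ from part (a) and the identity $2^{d-r} = 4^s$, yields the precise equality
\[ \Vol \Omega = |\Delta_K|^{-n/2} \left( \frac{(\pi/4)^s d^d}{d!} \right)^n \prod_{i=1}^{n} \epsilon_i = M(K)^{-n} \prod_{i=1}^{n} \epsilon_i, \]
which by hypothesis strictly exceeds $\Covol \Lambda$.

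Corollary \ref{5.8} then produces distinct $u, v \in \Omega$ with $u - v \in \hat{\sigma}(\Lambda)$; the first check ensures $u_i - v_i \in B_{t_i}$ for each $i$, so the corresponding $z \in \Lambda^{\bullet}$ satisfies $|z_i| \leq \epsilon_i$ for every $i$. This gives $C(\Z_K, n) \geq M(K)^{-n}$, and Proposition \ref{LINEAROVERPROP} then propagates the lower bound to every overring of $\Z_K$.

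I expect the only real difficulty to be the bookkeeping of normalizations in the volume computation. Three simultaneous conventions interact: the canonical absolute value $|z|_\infty = x^2 + y^2$ at each complex place; the doubled Haar measure on each $\C$-factor; and the rescaling $\Vol = \mathcal{V}(R)^{-n} \mu$ that aligns measure-theoretic covolume on $\mathcal{R}^n$ with the ideal-theoretic $\Covol \Lambda$. A further factor of $2^{-nd}$ enters from replacing $B_{t_i}$ by $B_{t_i/2}$. Verifying that all of these combine to exactly $M(K)^{-n}$, rather than an off-by-$2^s$ variant, is the only arithmetic content beyond the abstract machinery already assembled in the previous sections.
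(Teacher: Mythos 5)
Your proof is correct and follows essentially the same route as the paper's: both reduce to $R = \Z_K$ via the overring estimate, embed $K^n$ into $\R^{nd}$ through $\hat{\sigma}$, bound the Minkowski body $S_1(\epsilon)$ from inside by the AM--GM body defined by $\sum |y_{ik}| + 2\sum |z_{ij}|^{1/2} \leq d\,\epsilon_i^{1/d}$, compute its volume via Lemma \ref{ALGNT2}, and conclude by a Minkowski-type packing argument. The one cosmetic difference is that the paper cites Corollary 5.6 (Minkowski's Convex Body Theorem) directly, with the explicit factor $2^{nd}$, while you invoke Corollary 5.8 on the halved body $\prod_i B_{t_i/2}$ and then re-derive the Minkowski step inline from symmetry and convexity; the two routes are the standard derivation of one from the other, so the volume bookkeeping comes out identically.
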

\begin{proof}
By Proposition \ref{2.6}, we may assume $R = \Z_K$.  Consider the embedding $\hat{\sigma}: K^n \hookrightarrow \R^{nd}$, $(x_1,\ldots,x_n) \mapsto (\sigma(x_1),\ldots,\sigma(x_n))$.  For any $R$-sublattice $\Lambda$ of $R^n$, 
$\hat{\sigma}(\Lambda)$ is a lattice in $\R^{nd}$.  For $\epsilon = (\epsilon_1,\ldots,\epsilon_n) \in (\R^{>0})^n$, we define $S_1(\epsilon) \subset \R^{nd}$ as the set of all $x \in \R^{nd}$ satisfying 
\[ |x_{i1}| \cdots |x_{ir}| |x_{i(r+1)}^2 + x_{i(r+2)}^2| \cdots 
|x_{i(d-1)}^2 + x_{i d}^2| \leq \epsilon_i \]
for $1 \leq i \leq n$.  By the arithmetic geometric mean inequality, $S_1(\epsilon)$ contains the 
symmetric compact convex body $S_2(\epsilon)$ defined by 
\[ |x_{i1}| + \ldots + |x_{ir}| + 2 | x_{i(r+1)}^2 + x_{i(r+2)}^2|^{\frac{1}{2}} + \ldots + 2|x_{i(d-1)}^2 + x_{id}^2|^{\frac{1}{2}} \leq d \epsilon_i^{\frac{1}{d}} \]
for all $1 \leq i \leq n$.  By Lemma \ref{ALGNT2}, 
\[\Vol S_2(\epsilon) = \mathcal{V}(R)^{-n} \left(2^r \pi^s \frac{d^d}{d!} \right)^n \epsilon_1 \cdots \epsilon_n. \]
Applying Minkowski's Convex Body Theorem, there is a nonzero point of $\Lambda$ 
in $S_2(\epsilon)$ (hence also in $S_1(\epsilon)$ if 
\[ \mathcal{V}(R)^{-n} \left(2^r \pi^s \frac{d^d}{d!} \right)^n (\epsilon_1 \cdots \epsilon_N) = \Vol(S_2(\epsilon)) \geq 2^{nd}, \]
i.e., if and only if
\[ \Covol \Lambda \leq M(K)^{-n} \epsilon_1 \cdots \epsilon_n. \]
Since a point in $S_1(\lambda)$ satisfies $|x_i| \leq \epsilon_i$ for all $i$, 
this shows that $M(K)^{-n}$ is a linear constant for $\Z_K$ in dimension $n$.  
\end{proof}

\begin{remark}
When $K$ is an imaginary quadratic field, the lower bound on $C(\Z_K,n)$ 
given in (\ref{MINKLINEQ}) is precisely the same lower bound given in 
(\ref{LINCONIMAGQUADEQ}).  
\end{remark} 

\begin{thm}
\label{5.10}
\label{NBR2}
The Hasse domain $R = \Z_{K,S}$ is of multimetric linear type.  More precisely, 
for all $n \in \Z^+$, 
\[ C_M(R,n) \geq (\frac{\pi}{4})^s \mathcal{V}(R)^{-n}. \]
\end{thm}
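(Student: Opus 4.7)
The plan is to model the proof on Theorem \ref{NBR1}, but work place-by-place instead of combining the archimedean constraints via the arithmetic--geometric mean inequality, and to absorb the extra non-archimedean constraints from $S_f$ using the ultrametric inequality. First I would invoke the equivalence of multinormed linear type with multinormed linear congruential type asserted at the end of \S 2.4; it then suffices, given an integral $R$-lattice $\Lambda \subset R^n$ and constants $\epsilon_{ij} \in \overline{\mathcal{N}_j}$ satisfying
\[ \Covol \Lambda \leq \left(\tfrac{\pi}{4}\right)^{s} \mathcal{V}(R)^{-n} \prod_{i,j} \epsilon_{ij}, \]
to produce $x \in \Lambda^{\bullet}$ with $|x_i|_j \leq \epsilon_{ij}$ for every coordinate $i$ and every place $j \in S = S_{\infty} \cup S_f$. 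Embed $\Lambda$ into $\mathcal{R}^n = \prod_{v \in S} K_v^n$ via $\hat\sigma$; as set up just before Corollary \ref{5.8}, with the Haar measure $\Vol = \mathcal{V}(R)^{-n} \mu$ the image $\hat\sigma(\Lambda)$ becomes a lattice in $\mathcal{R}^n$ whose measure-theoretic covolume coincides with the algebraic $\Covol \Lambda$.

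The main construction is a test region $\Omega = \prod_{i=1}^n \prod_{v \in S} B_{iv} \subset \mathcal{R}^n$ in which each factor $B_{iv}$ is a symmetric neighborhood of $0 \in K_v$ calibrated so that the difference set $B_{iv} - B_{iv}$ is contained in the target ball $\{y \in K_v : |y|_v \leq \epsilon_{iv}\}$. At a real place take the symmetric interval of half-length $\epsilon_{iv}/2$; at a complex place take the disc $\{z : |z|^2 \leq \epsilon_{iv}/4\}$; at a finite place $v \in S_f$ take the full ball $\{y : |y|_v \leq \epsilon_{iv}\}$, which is already closed under subtraction because of the ultrametric inequality. Applying Corollary \ref{5.8} (the abstract Blichfeldt lemma adapted to $\mathcal{R}^n$), whenever $\Vol(\Omega) > \Covol \Lambda$ we obtain distinct $u, w \in \Omega$ with $x := u - w \in \hat\sigma(\Lambda)^{\bullet}$; by construction each coordinate of $x$ lies in the desired target ball, yielding the required $|x_i|_j \leq \epsilon_{ij}$.

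To close the proof I would compute $\Vol(\Omega)$ place-by-place: a real factor contributes Lebesgue length $\epsilon_{iv}$, a complex factor contributes $\pi \epsilon_{iv}/2$ in the paper's twice-Lebesgue normalization (since the half-disc has Euclidean area $\pi \epsilon_{iv}/4$), and a finite factor contributes $\epsilon_{iv}$ when $\mu_v$ is normalized so that $\mu_v(\mathcal{O}_v) = 1$. Multiplying up,
\[ \Vol(\Omega) = \mathcal{V}(R)^{-n} \left(\tfrac{\pi}{2}\right)^{ns} \prod_{i,j} \epsilon_{ij}, \]
which comfortably dominates the lower bound $(\pi/4)^s \mathcal{V}(R)^{-n} \prod_{i,j} \epsilon_{ij}$ forced by the hypothesis, and Blichfeldt applies. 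I expect the main obstacle to be the careful bookkeeping of the various Haar measure normalizations (particularly the factor of two at complex places, the $\mathcal{O}_v$-normalization at finite places, and the $\mathcal{V}(R)^{-n}$ renormalization identifying measure-theoretic covolume with $|\chi(\Lambda)|$), together with verifying that each local subtraction-closure inclusion $B_{iv} - B_{iv} \subseteq \{|y|_v \leq \epsilon_{iv}\}$ really holds with the chosen scaling; once these local computations are in place, no issue analogous to the $p = 2$ subtlety of midpoint closure arises because we are using subtraction rather than averaging.
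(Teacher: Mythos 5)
Your proof is correct and takes essentially the same approach as the paper: embed $\Lambda$ via $\hat{\sigma}$ into $\mathcal{R}^n$, apply Corollary \ref{5.8} (abstract Blichfeldt), and shrink the archimedean balls by a factor of $2$ at real places and $4$ at complex places so that the difference set lands in the target region, with the ultrametric inequality handling the finite places for free. The only cosmetic difference is that you build the shrunk region up front and verify the volume condition afterward, while the paper computes the unshrunk volume first and then substitutes shrunk radii; incidentally, your more careful bookkeeping (tracking the $i$-index across all $n$ coordinates) yields the sharper constant $(\pi/2)^{ns}\mathcal{V}(R)^{-n}$, of which the stated $(\pi/4)^{s}\mathcal{V}(R)^{-n}$ is a weaker (and apparently misexponentiated) consequence.
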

\begin{proof}
Consider the embedding $\hat{\sigma}: K^n \hookrightarrow \mathcal{R}^n$, $(x_1,\ldots,x_n) \mapsto (\sigma(x_1),\ldots,\sigma(x_n))$.  For any $R$-sublattice $\Lambda$ of $R^n$, 
$\hat{\sigma}(\Lambda)$ is a lattice in $\R^{nd}$ with covolume $[R^n:\Lambda]$.  Fix $\epsilon_{i,j} > 0$ and $e_{i,P_j} \in \Z$.  Let 
$\Omega$ be the set of $(x_1,\ldots,x_n) \in \mathcal{R}^n$ such that 
$|x_i|_{\infty_j} \leq \epsilon_{ij}$ and $\deg_{P_j} x_i 
\leq e_{i P_j}$ for all $i$ and $j$.  Then 
\[ \Vol \Omega = \mathcal{V}(R)^{-n} 2^r \pi^s \prod_{i,j} \epsilon_{ij} \prod_{j=1}^s q_{P_j}^{e_{1 P_j} + \ldots + e_{n P_j}}. \]
By Corollary \ref{5.8}, if 
\[ 2^r \pi^s \prod_{i,j} \epsilon_{ij} \prod_{j=1}^s q_{P_j}^{e_{1 P_j} + \ldots + e_{n P_j}} > [R^n:\Lambda] \mathcal{V}(R)^n, \]
then there are distinct $x,y \in \Omega$ such that $x-y \in \Lambda$.  Thus if 
we define $\epsilon_{ij}'$ to be $\frac{\epsilon_{ij}}{2}$ when $\infty_j$ is 
real and $\frac{\epsilon_{ij}}{4}$ when $\infty_j$ is complex, we find that 
if 
\[ 2^r \pi^s \prod_{i,j} \epsilon_{ij}' \prod_{j=1}^s q_{P_j}^{e_{1 P_j} + \ldots + e_{n P_j}} > [R^n:\Lambda] \mathcal{V}(R)^n, \]
then there is $x \in \Omega \cap \Lambda^{\bullet}$.  Thus any number larger 
than $2^r \pi^s 2^{-r-2s} \mathcal{V}(R)^{-n} = 2^{-2s} \pi^s \mathcal{V}(R)^{-n}$ is a linear multimetric constant in dimension $n$.  
\end{proof}


\begin{remark}
In Theorem \ref{5.10} we are more concerned with the qualitative result that 
$S$-integer rings in number fields are of multimetric linear type than with the 
value of the constant.  Thus we have not employed the arithmetic geometric mean maneuver of the proof of Theorem \ref{5.8} or even computed $\mathcal{V}(R)$ in the general case.
\end{remark}

\section{Quadratic Forms: the Nullstellensatz}

\subsection{The Nullstellensatz}
\textbf{} \\ \\ \noindent
Let $(R,|\cdot|)$ be a normed Dedekind domain of multimetric linear type.  We renormalize so that each norm $|\cdot|_j$ has Artin constant at most $2$.  Thus the triangle inequality holds and $|n|_j \leq n$ for all Archimedean $j$ and 
$|n|_j = 1$ for all non-Archimedean $j$.  
\\ \\
For $x = (x_1,\ldots,x_n) \in K^n$, we put
\[ |x|_j = \max_i |x_i|_j, \ |x| = \prod_{j=1}^m |x|_j. \]
For a matrix $M = (m_{ij}) \in M_n(R)$, we put 
\[ |M|_j = \max_{i,k} |m_{ik}|_j \]
when $j$ is a $q$-norm and 
\[ |M|_j = \sum_{i,k} |m_{ik}|_j \]
when $j$ is Archimedean.  Also put 
\[ |M| = \prod_{j=1}^m |M|_j. \]
For a quadratic form 
\[ f = \sum_{1 \leq i \leq k \leq n} m_{ik} x_i x_j \]
with coefficients in $R$, we let $M = (m_{ik})$ be the corresponding upper 
triangular matrix and put 
\[ |f| = |M|. \]
 An \textbf{isotropic vector} for a quadratic form $f$ is a nonzero vector $v \in R^n$ with $f(v) = 0$.  A form $f$ is \textbf{isotropic} if it has an isotropic vector and otherwise \textbf{anisotropic}.
\\ \\
There is a rival notion of the size of a vector $x = (x_1,\ldots,x_n) \in K^n$, namely we could take $\max_i |x_i|$.  Curiously, this is the measure we will 
use in the statement of the Nullstellensatz, although both will occur in the proof!  For later use we note the relationship between them:

\begin{equation}
\label{NORMNORMEQ}
\forall x = (x_1,\ldots,x_n) \in K^n, \ \max_i |x_i| \leq |x|.
\end{equation}

\begin{thm}[Nullstellensatz]
\label{6.1}
Let $(R,|\cdot|)$ be a normed Dedekind domain of multimetric linear type, with 
fraction field $K$.  We suppose that $|R|$ is a discrete subset of $\R$ (e.g. if some equivalent norm is $\Z$-valued).  Let $f = \sum_{i,j} m_{ij} t_i t_j \in R[t_1,\ldots,t_n]$ be a nonzero isotropic quadratic form. \\
a) If the norm is of $q$-type, then $f$ admits an isotropic vector $a \in R^n$ with 
\begin{equation}
\label{ISOTROPYEQ1}
\max_i \deg a_i \leq mn-1 - c_M(R,n) + \left( \frac{n-1}{2} \right) \deg f. 
\end{equation}
b) Suppose there is at least one $j$ such that $|\cdot|_j$ is Archimedean 
and $m'$ of them altogether.  Let $0 < \delta < C_M(R,n)$ and $0 < \eta < 1$.  Then $f$ admits 
an isotropic vector $a$ with 
\begin{equation}
\label{GRANDEQ}
\max_i |a_i| \leq \max  \left( \frac{ (1-\eta^{\frac{1}{m'}})^{-m'(n-1)}}{\eta (C_M(R,n)-\delta)} \right),\left(\frac{1}{\eta (C_M(R,n)-\delta)} \right) |3f|^{\frac{n-1}{2}} . 
\end{equation}
c) If $R = \Z$ then $q$ admits an isotropic vector $a \in R^n$ with 
\[ |a| \leq (3|f|)^{\frac{n-1}{2}}. \]

\end{thm}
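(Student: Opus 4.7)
The plan is to adapt the classical arguments of Cassels \cite{Cassels55} and Pfister \cite{Pfister97} to the multimetric setting. The key algebraic input is the polarization identity: letting $B$ denote the symmetric bilinear form associated to $f$ (so that $2B(u, v) := f(u+v) - f(u) - f(v) \in R$ for $u, v \in R^n$), if $f(x) = 0$ then for every $y \in K^n$ the vector
\[ z := f(y)\, x \,-\, 2 B(x, y)\, y \]
again satisfies $f(z) = 0$, as a direct expansion shows. Thus any isotropic $x$ together with a choice of auxiliary $y$ manufactures a new isotropic $z$, and the task is to pick $y$ so that $z$ is small.

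Given an isotropic $x \in R^n$, I would apply the multinormed linear type condition (in its congruential form, the analogue of Proposition \ref{EQUIVALENCEPROP}) to the $n \times n$ system of linear forms consisting of $y \mapsto 2B(x, y)$ together with $n - 1$ coordinate projections $y \mapsto y_i$. The determinant of this system is, up to sign, an entry of $2Bx$, which is nonzero after reordering coordinates provided $x$ is a nonsingular point of $\{f = 0\}$; the degenerate case reduces to a nonsingular subform. The bounds $\epsilon_{ij}$ are chosen so that the bilinear slot is forced strictly less than $1$ in each non-Archimedean factor --- by discreteness of $|R|$ this forces $2B(x, y) = 0$ at those places --- while the coordinate slots are each of order roughly $|f|_j^{1/2(n-1)} |x|_j^{1/(n-1)}$. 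The triangle inequality
\[ |z_i|_j \leq |f(y)|_j |x_i|_j + 2 |B(x, y)|_j |y_i|_j \]
(legitimate by the Artin-constant-$\leq 2$ renormalization), combined with $|f(y)|_j \leq |f|_j \max_i |y_i|_j^2$, yields a bound on each $|z_i|_j$ of the shape $|f|^{(n-1)/2}$; the exponent arises from distributing one bilinear constraint across $n - 1$ coordinate constraints. Nonvanishing of $z$ is handled by cases: either $f(y) = 0$, in which case $y$ itself is the desired isotropic vector, or the linear-forms construction prevents $y$ from being proportional to $x$.

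Part (a) follows from the additive form of this argument over the degree function; strict inequalities come for free because degrees are $\Z$-valued. Part (b) requires the slack parameters $0 < \eta < 1$ and $0 < \delta < C_M(R, n)$ to convert between open and closed norm prescriptions in (\ref{GRANDEQ}), and the factor $(1 - \eta^{1/m'})^{-m'(n-1)}$ arises from distributing the Archimedean slack across $m'$ places and $n - 1$ coordinate slots. Part (c) is the specialization $R = \Z$, $m = m' = 1$, $C_M(\Z, n) = 1$ by Theorem \ref{ZLINEARTYPETHM}; discreteness of $|\Z| = \N$ lets us send $\eta \to 1^-$ and $\delta \to 0^+$ in (\ref{GRANDEQ}) to recover the sharp Cassels bound $(3|f|)^{(n-1)/2}$. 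The main technical obstacle is carefully tracking the constants through the triangle inequality at every Archimedean place simultaneously --- in particular, pinning down the factor $3$ in part (c) requires a precise analysis of the interaction between $|f(y)|$ and $|B(x, y)|$ --- along with the bookkeeping needed to handle the degenerate case $Bx = 0$.
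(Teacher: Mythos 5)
The polarization identity you cite ($z := f(y)\,x - 2B(x,y)\,y$ is again isotropic) is exactly the reflection trick the paper uses, and your instinct to view this as a multimetric extension of Cassels and Prestel is right. But the way you deploy the linear type condition is genuinely different from the paper's route, and I don't think your version closes.

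The paper takes a \emph{minimal} isotropic $a \in R^n$ and applies Theorem~\ref{2.10} (Diophantine approximation) with the matrix (\ref{2.10EQ}), which has determinant $\pm 1$ and involves only the ratios $\theta_i = a_i/a_n$, not the bilinear form. This produces $b \in R^n$ with $|b| < |a|$ --- hence $f(b) \neq 0$ by minimality --- such that $d := b - (b_n/a_n)\,a \in K^n$ is small. The decisive trick is the identity $a^* = f(b)a - \langle a,b\rangle b = f(d)a - \langle a,d\rangle d$, valid because $f(a)=0$. Here $d$ lies in $K^n$, not $R^n$, so $|d|$ can be strictly less than $1$; the estimate $|a^*| \leq |3|\,|f|\,|a|\,|d|^2$ then combines with minimality ($|a| \leq |a^*|$) to force $|d|^2 \geq (|3||f|)^{-1}$, and the Diophantine bound on $|d|$ converts this into an upper bound on $|a|$.

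Your proposal applies the linear type condition to the system $\{2B(x,\cdot),\, y_1,\ldots,y_{n-1}\}$ and seeks $y \in R^n$ with $2B(x,y)$ small and coordinates controlled. Two problems. First, the determinant of this system is $\pm 2(Bx)_n$, which has size roughly $|f|\,|x|$ --- unlike the paper's determinant-$1$ system --- so the $\epsilon_i$'s must grow with $|x|$, and the $y$ you extract has $|y| \gtrsim (|f||x|)^{1/(n-1)}$; this is already circular if the goal is to bound $|x|$. Second, and more fatally, since $y \in R^n$ and $f(y) \in R$ with $f(y) \neq 0$, one has $|f(y)| \geq 1$, so $|z| \geq |f(y)|\,|x| \geq |x|$ generically; your reflection cannot shrink the isotropic vector. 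The mechanism that makes the paper's bound work is precisely that the reflection is recast through $d \in K^n \setminus R^n$ with $|d| < 1$, something that doesn't arise if you hunt for $y$ directly in $R^n$. You also don't clearly invoke minimality of the starting isotropic vector, which is where the lower bound $|a|\leq|a^*|$ comes from; your nonvanishing cases ("either $f(y)=0$ or $y$ not proportional to $x$") don't actually establish that the output is a nonzero isotropic vector smaller than $x$. So while the algebraic identity is the same, the architecture is missing the descent step, and I don't see how to repair the proposal without reintroducing the Diophantine-approximation-plus-minimality structure that the paper uses.
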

\begin{proof} In Steps 0 and 1 we treat the part of the proof which is essentially the same in all cases.  Then we treat the $q$-normed case in Step 2, 
the densely normed case in Step 3, and the $R = \Z$ case in Step 4. \\ 
Step 0:  Since $|R|$ is discrete, there is an $f$-isotropic vector $a = (a_1,\ldots,a_n) \in R^n$ with $|a|$ minimal.   By permuting the variables, we may assume that $\max_i |a_i| = |a_n|$.  For $x,y \in K^n$, we define a bilinear form
\[ \langle x, y \rangle = f(x+y)- f(x)- f(y) = \sum_{1 \leq i \leq j \leq n} m_{ij}(x_i y_j + x_j y_i). \]
Step 1: We {\sc claim}: for all $b \in R^n$ with $f(b) \neq 0$ and all $c \in K$, 
\begin{equation}
\label{ISOEQ3}
|f|^{-1} \leq  |3| |b-ca|^2. 
\end{equation}
{\sc proof of claim:}
Let 
\[a^* = f(b)a - \langle a,b \rangle b. \]
A calculation -- which can be interpreted in terms of reflection through $b$ -- gives  
\[ f(a^*) = f(b)^2f(a) - f(b) \langle a,b \rangle 
\langle a,b \rangle   + \langle a,b \rangle^2 f(b)= 0. \]
By the minimality of $a$, we have 
\begin{equation}
\label{ISOEQ1}
|a| \leq |a^*|.
\end{equation}
Now put $d = b-ca$, so $b = d+ca$.  Then 
\[ a^* = f(d+ca)a - \langle a,d+ca \rangle (d+ca) = f(d) a - \langle a,d \rangle d. \]
Fix $1 \leq j \leq m$.  Suppose first that $|\cdot|_j$ is Archimedean.  Then: 
\[ |a^*|_j = |f(d)a - \langle a,d \rangle d|_j \leq 
|\sum_{i,j} m_{ij} d_i d_j|_j |a|_j + |2|_j |\sum_{i,j} m_{ij} a_i d_j|_j |d|_j \]
\[ \leq \sum_{i,j} |m_{ij}|_j |a|_j |d|_j^2 + |2|_j \sum_{i,j} |m_{ij}|_j |a|_j |d|_j^2 
=  |3|_j|f|_j |a|_j |d|_j^2. \]
The ultrametric case is similar.  Multiplying from $j = 1$ to $m$ we get  
\begin{equation}
\label{ISOEQ2}
|a^*| \leq  |3| |f| |a| |d|^2. 
\end{equation}  
Combining (\ref{ISOEQ1}) and (\ref{ISOEQ2}) and dividing through 
by $|a| |f|$, we get (\ref{ISOEQ3}).  \\
Step 2: Suppose the norm is of $q$-type.  We may assume that 
\[\deg a_n \geq mn-c_M(R,n), \] for (\ref{ISOTROPYEQ1}) holds otherwise. 
  Apply Theorem \ref{2.10}b) with $n-1$ in place of $n$, $M = \deg a_n$ and $\theta_i = \frac{a_i}{a_n}$ for $1 \leq i \leq n-1$: there is $b = (b_1,\ldots,b_n) \in R^n$ with 
$b_n \neq 0$ such that 
\begin{equation}
\label{STEP4EQ}
\forall 1 \leq i \leq n-1, \forall 1 \leq j \leq m, \ \deg_j b_n \theta_i - b_i \leq \frac{mn-1-c_M(R,n) - \deg a_n}{m(n-1)}, 
\end{equation}
\begin{equation}
\label{STEP4EQ2}
\forall 1 \leq j \leq m, \  \deg_j b_n \leq \deg_j a_n - 1.
\end{equation}
We {\sc claim} that for all $i,j$, $\deg_j b_i \leq \deg_j a - 1$.  When $i = n$, this follows from (\ref{STEP4EQ2}).  Suppose $1 \leq i \leq n-1$.  Then for all $1 \leq j \leq m$,  
 \[ \deg_j b_i \leq \max \left(\frac{mn-1-c_M(R,n) - \deg a_n}{m(n-1)}, \deg_j b_n + 
\deg_j a_i - \deg_j a_n \right).\]
If $b_i = 0$ then 
for all $j$, $\deg_j b_i = -\infty \leq \deg_j a_n - 1$. If $b_i \neq 0$, then since $\left(\frac{mn-1-c_M(R,n) - \deg a_n}{m(n-1)}\right) < 0$, we have
\[\deg_j b_i \leq \deg_j a_i + (\deg_j b_n - \deg_j a_n ) \leq \deg_j a_i -1, \]
establishing the claim.  Thus for all $j$, $\deg_j b = \max_i \deg_j b_i \leq 
\deg_j a - 1$, so 
\[ \deg b = \sum_{j=1}^m \deg_j b \leq \deg_j a - m < \deg a, \]
so by minimality of $a$, $f(b) \neq 0$.  
Put $c = \frac{b_n}{a_n}$; then 
\begin{equation}
\label{ISOEQ4}
 \deg b-ca \leq \frac{mn-1-c_M(R,n) - \deg a_n}{n-1}.
\end{equation}
In this case (\ref{ISOEQ3}) can be restated as 
\begin{equation}
\label{ISOEQ3BIS}
- \deg f \leq 2\deg(b-ca). 
\end{equation}
Combining (\ref{ISOEQ3BIS}) and (\ref{ISOEQ4}) we get 
\[ - \deg f \leq 2 \left( \frac{mn-1-c_M(R,n) - \deg a_n}{n-1} \right), \]
which is equivalent to 
\[ \max_i \deg a_i = \deg a_n \leq (mn-1) - c_M(R,n) + \left(\frac{n-1}{2}\right) \deg f. \]\
Step 3: Suppose that the number $m'$ of Archimedean infinite places is at least one.  For $\delta > 0$, we put $C_{\delta} = C_M(R,n) - \delta$: we will take $\delta$ to be small enough so that $C_{\delta} > 0$.  We introduce an auxiliary 
parameter $\eta \in (0,1)$.  From the form of the claimed inequality on 
$|a_n| = \max_i |a_i|$ we may assume 
\begin{equation}
\label{NASTY1}
|a_n| > \frac{(1-\eta^{\frac{1}{m'}})^{-m'(n-1)}}{\eta C_{\delta}}. 
\end{equation}
Let us also put 
\[ \kappa = \left( C_{\delta} \eta |a_n| \right)^{\frac{-1}{m'(n-1)}};\]
then (\ref{NASTY1}) is equivalent to 
\begin{equation}
\label{NASTY2} 
\kappa < 1 - \eta^{\frac{1}{m'}}.
\end{equation}
Apply Theorem \ref{2.10} with $n-1$ in place of $n$, $M = \eta |a_n|$ and $\theta_i = \frac{a_i}{a_n}$ for $1 \leq i \leq n-1$: there is $b = (b_1,\ldots,b_n) \in R^n$ with 
$b_n \neq 0$ such that: if $j$ is Archimedean then 
\[ \forall 1 \leq i \leq n-1, \  |b_n \theta_i - b_i|_j \leq \left( C_{\delta} \eta |a_n|\right)^{\frac{-1}{m'(n-1)}} = \kappa < 1\]
\[ |b_n|_j \leq \eta^{\frac{1}{m'}} |a_n|_j, \]
whereas for every non-Archimedean $j$ we have 
\[ \forall 1 \leq i \leq n-1, \ |b_n \theta_i - b_i|_j \leq 1, \]
\[ |b_n|_j \leq 1. \]
For all non-Archimedean $j$ and all $i$ we have $|b_i|_j \leq 1 \leq |a_n|_j$ and thus $|b|_j \leq |a|_j$.  Now let $j$ be Archimedean.  We have 
\[|b_n|_j \leq \eta^{\frac{1}{m'}} |a_n|_j < |a_n|_j. \]
Further, for $1 \leq i \leq n-1$,  
\[ |b_i|_j = |b_n \frac{a_i}{a_n} - b_i - b_n \frac{a_i}{a_n}|_j 
\leq |b_i - b_n \frac{a_i}{a_n}|_j + \frac{|b_n|_j}{|a_n|_j} |a_i|_j \leq \kappa + \eta^{\frac{1}{m'}} |a_i|_j . \]
If $a_i = 0$, then this gives \[|b_i|_j \leq \kappa < 1 \leq |a_n|_j \leq |a|_j. \]  Otherwise $|a_i|_j \geq 1$, so by (\ref{NASTY2})
\[ \left(1-\eta^{\frac{1}{m'}}\right) |a_i|_j \geq 1-\eta^{\frac{1}{m'}} > \kappa \]
and thus 
\[ |b_i|_j \leq \kappa + \eta^{\frac{1}{m'}} |a_i|_j < |a_i|_j \leq |a|_j. \] 
Thus we have $|b|_j \leq |a|_j$ for all $j$ with strict inequality at each Archimedean place hence $|b| < |a|$.  By minimality of $a$, $f(b) \neq 0$.  Put $c = \frac{b_n}{a_n}$; then 
\begin{equation}
\label{NASTY5}
|b-ca| \leq \prod_{j=1}^{m'} \left( C_{\delta} \eta |a_n| \right)^{\frac{-1}{m'(n-1)}} = \left( C_{\delta} \eta |a_n| \right)^{\frac{-1}{n-1}}. 
\end{equation}
Combining (\ref{ISOEQ3}) and (\ref{NASTY5}) as above yields 
\[ \max_i |a_i| = |a_n| \leq \frac{1}{\eta C_{\delta}} |3f|^{\frac{n-1}{2}}. \]
Step 4: If $R = \Z$, then it is no loss of generality to suppose that $|a| \geq 2$, as the claimed bound certainly holds otherwise.  Using Corollary \ref{3.4} there is $b = (b_1,\ldots,b_n) \in R^n$ 
with $b_n \neq 0$ such that 
\[ \forall 1 \leq i \leq n-1, |b_n \theta_i - b_i| \leq |a|^{\frac{-1}{n}}, \]
\[ |b_n| < |a|. \]
Then -- exploiting that the norm on $\Z$ is $\N$-valued -- for all $1 \leq i \leq n-1$, 
\[ |b_i| \leq |b_n \theta_i - b_i| + |b_n \frac{a_i}{a_n}| < 1 + |b_n| \leq |a|. \]
The rest of the argument is the same as in Step 3 above and leads to 
\[ |a| \leq  (3|f|)^{\frac{n-1}{2}}. \]

\end{proof}

\subsection{Some Cases of the Nullstellensatz}
\textbf{} \\ \\ \noindent
In every case, Theorem \ref{6.1} takes the form: for a suitable normed domain $(R,|\cdot|)$ and $n \in \Z^+$, there is a constant $Q(R,n)$ such that every 
nonzero isotropic $n$-ary quadratic form $f$ over a normed domain $(R,|\cdot|)$ admits an isotropic vector $|a|$ with 
\begin{equation}
\label{EXPBOUND}
|a| \leq Q(R,n) |f|^{\frac{n-1}{2}}. 
\end{equation}
When $R = \Z$ the existence of a 
bound of the form (\ref{EXPBOUND}) was shown by Cassels \cite{Cassels55}.  In his textbook \cite{Cassels}, Cassels gave an improved argument leading to the better 
bound $Q(R,n) = 3^{\frac{n-1}{2}}$.  We have essentially reproduced this argument in our Theorem \ref{6.1}c).  Cassels gives examples to show that the exponent $\frac{n-1}{2}$ cannot be 
improved upon, and thus Theorem \ref{6.1}b) is sharp up to the constant 
$Q(\Z,n)$.  Whether one can improve upon $Q(\Z,n) = 3^{\frac{n-1}{2}}$ seems to be an open question.  There is certainly no room for improvement coming from linear forms: we have used that the linear constants $C(\Z,n)$ are all equal to $1$ -- the largest possible value -- and even a little more via Theorem \ref{3.4}. 
\\ \\
By Theorem \ref{NBR2}, the hypotheses of part b) hold when $R = \Z_K$ 
for an imaginary quadratic field $K$.  A result of this form was first proved by \cite{Raghavan75}, who showed that one can take $Q(R,n) = \disc(K)^{\frac{n}{4}} 5^{\frac{n-1}{2}}$.  To apply Theorem \ref{6.1} in this case we take the square root of the canonical norm on $\Z_K$, giving $C(R,n) \geq \left(\frac{2}{\pi}\right)^{\frac{n}{2}} (\disc K)^{\frac{n}{4}}$.  Our approach gives a better constant, at least asymptotically: assuming that $|f|$ is large enough so that the ``eta factor'' in (\ref{GRANDEQ}) can be ignored, we get a 
constant arbitrarily close to $\left(\frac{2}{\pi}\right)^{\frac{n}{2}} 3^{\frac{n-1}{2}} (\disc K)^{\frac{n}{4}}$.  
\\ \\
(We admit that the eta factor in (\ref{GRANDEQ}) seems to be an artifice of the proof.  Unfortunately we do not know how to remove it, but probably someone else will.)  
\\ \\
When $K$ is a number field with more than one infinite place, the canonical norm is not metric.  This did not stop Raghavan from proving a generalization of 
Cassels's Theorem in this context: the constant he gets is $\disc(K)^{\frac{n}{2[K:\Q]}} 5^{\frac{n-1}{2}}$.  However he does not use (an equivalent norm to) the canonical norm: in fact his measure of the size of 
the coefficients is not a norm at all in our sense, as it is only submultiplicative (but satisfies the triangle inequality).  
\\ \\
Combining the Nullstellensatz with Theorem \ref{NBR2} for $R = \Z_K$ we recover a variant of Ragahvan's result.  But moreover we may take $R = \Z_{K,S}$ to be 
any $S$-integer ring.  This is a new result, but as we will see it is a natural one, being an analogue of a result of Pfister in the function field case.  
\\ \\
Turning now to the $q$-normed case of Theorem \ref{6.1}, we get cleaner results. 

\begin{cor}(Prestel \cite{Prestel87})
Let $k$ be a field\footnote{In fact Prestel assumed that $\car k \neq 2$, but as Pfister later noted, to get around this one need only not divide by $2$ in the 
definition of the associated bilinear form.}, and let $f$ be a nonzero $n$-ary quadratic form with coefficients in $k[t]$.  If $f$ is isotropic, there is an isotropic vector $v$  with $\deg v \leq \frac{n-1}{2} \deg f$.  
\end{cor}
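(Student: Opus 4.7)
The plan is to deduce Prestel's theorem as an immediate corollary of the $q$-normed case of the Nullstellensatz (Theorem \ref{6.1}a) specialized to $R = k[t]$, equipped with its standard norm $|f| = q^{\deg f}$ for any choice of $q > 1$.

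First I would verify the hypotheses. The ring $R = k[t]$ is the affine coordinate ring of $\mathbb{A}^1_k = \mathbb{P}^1_k \setminus \{\infty\}$, so there is exactly one point at infinity: in the multimetric language of $\S 1.8$, this means $m = 1$, and the norm $|\cdot|$ is ultrametric (hence almost metric) by Proposition \ref{1.12}b). Thus the norm is of $q$-type and all discreteness/multimetric hypotheses of Theorem \ref{6.1}a) are satisfied. Next I invoke Corollary \ref{4.4}, which says $c(k[t],n) = n-1$; since $m = 1$, this is the same as the multinormed linear constant $c_M(k[t],n) = n-1$.

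Now I apply Theorem \ref{6.1}a) directly. For any nonzero isotropic quadratic form $f \in k[t][t_1,\ldots,t_n]$, there exists an isotropic vector $a = (a_1,\ldots,a_n) \in k[t]^n$ with
\[
\max_i \deg a_i \;\leq\; mn - 1 - c_M(R,n) + \left(\frac{n-1}{2}\right)\deg f \;=\; n - 1 - (n-1) + \frac{n-1}{2}\deg f \;=\; \frac{n-1}{2}\deg f.
\]
Since $m = 1$, we have $\deg v = \max_i \deg v_i$ for $v \in k[t]^n$ by definition, and the claimed bound $\deg a \leq \frac{n-1}{2}\deg f$ follows immediately.

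There is essentially no obstacle here: the entire content of the corollary is packaged into Theorem \ref{6.1}a) and Corollary \ref{4.4}. The only mild subtlety worth flagging is characteristic $2$: the proof of Theorem \ref{6.1} uses the symmetric bilinear form $\langle x,y\rangle = f(x+y)-f(x)-f(y)$ rather than the halved version, so no division by $2$ is needed, and the argument applies uniformly to fields of arbitrary characteristic, matching Pfister's extension of Prestel's original hypothesis.
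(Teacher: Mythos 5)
Your proof is correct and matches the paper's own argument: the paper's entire proof of this corollary is ``Apply Corollary \ref{4.4} and Theorem \ref{6.1}'', and you have carried out exactly that, verifying that $R = k[t]$ has $m = 1$ infinite place so that $c_M(k[t],n) = c(k[t],n) = n - 1$, and then substituting into the bound of Theorem \ref{6.1}a) to obtain $mn - 1 - c_M(R,n) + \frac{n-1}{2}\deg f = \frac{n-1}{2}\deg f$. Your side remarks (that $\deg v = \max_i \deg v_i$ when $m = 1$, and that the bilinear form $\langle x,y\rangle = f(x+y) - f(x) - f(y)$ avoids division by $2$ and hence handles characteristic $2$) are also consistent with the paper's conventions.
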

\begin{proof}
Apply Corollary \ref{4.4} and Theorem \ref{6.1}.
\end{proof}
\noindent
Our overall method of proof of Theorem \ref{6.1} owes a lot to \cite{Prestel87}: roughly, we replaced an \emph{ad hoc} argument on linear systems over $k[t]$ with our theory of (mutinormed) linear constants.  \\ \indent
Again Prestel gives an example to show that the exponent $\frac{n-1}{2}$ in 
(\ref{EXPBOUND}) is best possible, again whether the constant is best possible 
remains open, and again there is no possible improvement coming from the theory 
of linear constants, since $c(k[t],n) = n-1$ is the largest possible value.  
\\ \indent 
In the same paper, Prestel considers the ring $R = \R[x,y]$.  Writing $\deg f$ 
for the total degree of an element of $R$, notice that for fixed $q > 1$, $|f| = q^{\deg f}$ gives an \emph{elementwise} multiplicative $q$-norm function on the UFD $R$.  It is sensible to define the linear $q$-constants $c(R,n)$ in this context -- since $R$ is not a Dedekind domain, one ought to restrict to free lattices -- and if 
$c(R,n) > -\infty$, the proof of Theorem \ref{6.1}c) would apply to give a bound 
on the degree of an isotropic vector for an isotropic quadratic form in terms 
of the total degrees of the coefficients of the form.  However, for $n = 16$ 
Prestel exhibits for each $v \in \N$ a quadratic form $f_v \in R[t_1,\ldots,t_n]$ 
with total degree $2$ and such that the least degree of an isotropic vector is 
at least $v$ \cite[Thm. 2]{Prestel87}.  Thus $\R[x,y]$ is not of linear type!

\begin{cor}
\label{6.3}
Let $C_{/k}$ be a nice projective curve over $k$, let $\infty_1,\ldots,\infty_m$ be closed points of degrees $d_1,\ldots,d_m$.  Let 
$C^{\circ} = C \setminus \{\infty_1,\ldots,\infty_m\}$, and let $k = k[C^{\circ}]$ endowed with its canonical $q$-norm of $\S$ 1.8. 
Let $f \in R[t_1,\ldots,t_n]$ be a nonzero isotropic quadratic form.  Then $f$ admits an isotropic vector $v$ with 
\begin{equation}
\label{PFISTEREQ}
 \deg v \leq (\sum_j d_j +g-1)n + \frac{n-1}{2} \deg f. 
\end{equation}
\end{cor}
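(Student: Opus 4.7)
The plan is to combine the multinormed linear constant bound from Theorem \ref{4.7} with the $q$-normed case of the Nullstellensatz (Theorem \ref{6.1}(a)) and simplify. First I would check that the hypotheses of Theorem \ref{6.1}(a) are satisfied: the ring $R = k[C^{\circ}]$ is an affine domain, hence a Dedekind domain, and by Theorem \ref{4.7} it is of multimetric linear type. Its canonical norm is $q$-valued on $R^{\bullet}$ (since $|I| = q^{\dim_k R/I}$ with $\dim_k R/I \in \N$), so $|R|$ is a discrete subset of $\R$. The norm is of $q$-type by Proposition \ref{PROP8}. Thus Theorem \ref{6.1}(a) applies and yields an isotropic vector $a \in R^n$ with
\[ \max_i \deg a_i \leq mn - 1 - c_M(R,n) + \left(\frac{n-1}{2}\right) \deg f. \]

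Next I would insert the lower bound on $c_M(R,n)$ from Theorem \ref{4.7}, namely $c_M(R,n) \geq n(m+1 - \sum_{j=1}^m d_j - g) - 1$. Since the right-hand side of the Nullstellensatz bound is a decreasing function of $c_M(R,n)$, replacing $c_M(R,n)$ by this lower bound enlarges (or leaves unchanged) the bound, giving a still-valid inequality:
\[ \max_i \deg a_i \leq mn - 1 - \bigl[n(m+1-\textstyle\sum_j d_j - g) - 1\bigr] + \left(\tfrac{n-1}{2}\right) \deg f. \]
A direct arithmetic simplification collapses the first two terms: $mn - nm - n + n\sum_j d_j + ng = n(\sum_j d_j + g - 1)$, yielding exactly (\ref{PFISTEREQ}) with $\deg v := \max_i \deg v_i$.

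There is essentially no obstacle beyond the bookkeeping: the two ingredients are tailor-made for each other, and the corollary is really a quantitative specialization of the general Nullstellensatz to affine domains. The only subtlety worth flagging is the interpretation of $\deg v$ in the conclusion; since Theorem \ref{6.1}(a) produces a bound on $\max_i \deg v_i$ rather than on $\deg v = \sum_j \deg_j v$, the statement of Corollary \ref{6.3} must be read with $\deg v = \max_i \deg v_i$, and this is indeed the interpretation under which the arithmetic above matches (\ref{PFISTEREQ}) exactly.
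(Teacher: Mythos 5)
Your proposal is correct and is essentially the paper's own argument: the paper's proof of Corollary~\ref{6.3} is simply ``Apply Theorems~\ref{4.7} and \ref{6.1},'' and your write-up is exactly that with the substitution and arithmetic spelled out (correctly). Your flag about reading $\deg v$ as $\max_i \deg v_i$ is also the right interpretation, consistent with the Nullstellensatz's stated bound on $\max_i \deg a_i$.
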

\begin{proof}
Apply Theorems \ref{4.7} and \ref{6.1}.
\end{proof}
\noindent
Corollary \ref{6.3} is a variant of the Nullstellensatz of A. Pfister.  
For $f \in k(C)^{\bullet}$, let $\deg_P f$ be the degree of the polar part of $\div f$; by taking maxima we extend this notion of $\deg_P$ to vectors and matrices with coefficients in $k(C)$.  Then:

\begin{thm}(Pfister \cite{Pfister97})
\label{6.4}
With hypotheses as in Corollary \ref{6.3}, $f$ admits an isotropic vector $v$ 
with 
\begin{equation}
\deg_P v \leq (\max_{i} d_i +g-1)n + \frac{n-1}{2} \deg_P f.
\end{equation}
\end{thm}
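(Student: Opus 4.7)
My plan is to mirror the proof of the Nullstellensatz (Theorem \ref{6.1}a) but with the polar-degree measure $\deg_P$ in place of the total multimetric degree $\deg$, and with the Riemann--Roch input from Theorem \ref{4.7} refined so that the pole budget for the Diophantine approximation step is concentrated at the place $\infty_{j^*}$ of maximal degree $D = \max_j d_j$, rather than distributed across all of the $\infty_j$ (this last being the source of the $\sum_j d_j$ factor in Corollary \ref{6.3}).

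I first select $a = (a_1, \ldots, a_n) \in R^n$, an $f$-isotropic vector minimizing $N := \deg_P a = \max_i \deg a_i$, and (permuting if necessary) assume $N = \deg a_n$. Setting $\theta_i := a_i/a_n \in K$ for $1 \leq i \leq n-1$, I seek $b \in R^n$ with $b_n \neq 0$, $\deg_P b < N$, and good place-by-place approximation $\deg_j(b_n \theta_i - b_i) \leq \beta_{ij}$ at each $\infty_j$, with the $\beta_{ij}$ chosen so that the polar degree of $d := b - (b_n / a_n)\, a \in K^n$ is controlled. To obtain such a $b$ with parameters scaling with $D$ rather than $\sum_j d_j$, I apply Riemann--Roch directly (bypassing the multinormed approximation of Theorem \ref{2.10}b): allocating all pole allowance to $\infty_{j^*}$ and using $\dim_k \mathcal{L}(e \cdot \infty_{j^*}) \geq eD - g + 1$, a dimension count of the type used in Theorem \ref{4.5} (Linear Algebraic Pigeonhole Principle) produces the desired $b$ provided $N \geq n(D + g - 1) + 1$.

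With such a $b$ in hand, the reflected vector $a^* := f(b)a - \langle a, b\rangle b$ lies in $R^n$ and is $f$-isotropic; the condition $\deg_P b < N$ together with minimality of $N$ forces $f(b) \neq 0$, so $a^*$ is nonzero. The per-place estimates from Step 1 of the proof of Theorem \ref{6.1}, applied ultrametrically at each $\deg_j$, yield
\[
\deg_j a^*_i \;\leq\; \max_{k,\ell} \deg_j m_{k\ell} + \deg_j a + 2 \deg_j d \quad \text{for all } i,j.
\]
Summing over infinite places, combining with the upper bound on $\deg_j d$ from the Riemann--Roch step, and invoking the minimality $\max_i \deg a_i^* = \deg_P a^* \geq \deg_P a = N$, I obtain the inequality $N \leq n(D + g - 1) + \tfrac{n-1}{2} \deg_P f$.

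The main obstacle is the sum-over-places bookkeeping in the last step: unlike $\deg$ in Theorem \ref{6.1}a, the polar degree $\deg_P$ is a sum of per-place contributions rather than a product, so the per-place inequalities for $\deg_j a^*_i$ must be combined carefully using the fact (engineered in the Riemann--Roch step) that the contributions to $\deg_j d$ at the places $j \neq j^*$ are nonpositive. This concentration of the pole at $\infty_{j^*}$ is what allows $\max_j d_j$ to replace $\sum_j d_j$ in the final bound; verifying that the approximation step can achieve such concentration while still producing a $b$ with $\deg_P b < N$ is the crux of the argument.
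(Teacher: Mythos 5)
The statement you set out to prove is not proved in the paper at all: Theorem~\ref{6.4} is cited as a result of Pfister and accompanied only by discussion. The paper is explicit that the measure $\deg_P$ used in Theorem~\ref{6.4} and the measure $\deg$ used in the Nullstellensatz (Theorem~\ref{6.1}, Corollary~\ref{6.3}) ``are different'' and that as a result ``the results do not appear to be directly comparable.'' The route the paper actually endorses is not a re-run of the Nullstellensatz argument with $\deg_P$ replacing $\deg$: it is Pfister's ``short argument involving the Riemann--Roch Theorem,'' which reduces the general $m>1$ case to the common special case $m=1$ (where $\deg_P=\deg$ and Theorem~\ref{6.1}a) applies verbatim). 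That reduction yields Corollary~\ref{6.5}, whose constant $\frac{3n-1}{2}(\min_j d_j + g - 1)$ differs from the $(\max_i d_i + g - 1)n$ in Theorem~\ref{6.4}; the paper observes that neither constant uniformly beats the other. So your proposal, even if it closed every gap, would produce a theorem the authors found they could not extract from their machinery.

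Beyond the architectural mismatch, the gap you flag as ``the main obstacle'' is a genuine one, not a bookkeeping nuisance, and it is exactly the reason the paper deflects to Pfister. In Step~1 of the proof of Theorem~\ref{6.1}, the inequality $|a^*|_j \le |f|_j\,|a|_j\,|d|_j^2$ holds per place, and the product over all $j$ gives the clean statement $\deg a^* \le \deg f + \deg a + 2\deg d$ because $\deg = \sum_j \deg_j$ is additive over \emph{all} places, signs included. The polar degree $\deg_P$ discards the negative contributions, so summing the per-place inequalities does \emph{not} give $\deg_P a^* \le \deg_P f + \deg_P a + 2\deg_P d$: on the left you would be dropping negative $\deg_j a^*_i$'s, while on the right you retain contributions from places where, say, $\deg_j a_i$ is negative but $\deg_j d$ is positive. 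Similarly, the minimality hypothesis $\deg_P a \le \deg_P a^*$, a single inequality between two max-of-sums-of-positive-parts, does not decompose into per-place or per-coordinate information that could drive the argument. Without a replacement for the multiplicativity of $|\cdot| = \prod_j |\cdot|_j$, the combination step fails, and your plan to ``concentrate the pole at $\infty_{j^*}$'' does not rescue it, because $a^*$ is a linear combination involving $a$, whose pole structure you do not control. As a small additional point, your identification $\deg_P a = \max_i \deg a_i$ is incorrect in general: for $a_i \in R$, one has $\deg_P a_i = \sum_{j:\deg_j a_i>0}\deg_j a_i \ge \deg a_i = \sum_j \deg_j a_i$, with strict inequality whenever $a_i$ vanishes at some $\infty_j$.

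If you want a proof of a Pfister-type bound within the paper's framework, the right move is the reduction Pfister himself uses: fix a single $\infty_{j^*}$, work over the subring $R'=k[C\setminus\{\infty_{j^*}\}]\subset R$, clear the poles of the coefficients of $f$ at the other $\infty_j$'s to land in $R'[t_1,\dots,t_n]$, apply Theorem~\ref{6.1}a) in the $m=1$ setting where $\deg_P=\deg$, and then translate back. That produces Corollary~\ref{6.5}, not Theorem~\ref{6.4}, but it is a complete argument.
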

\noindent
For $x \in k[C^{\circ}]^{\bullet}$, $\deg x$ is the sum of all of the infinite degrees $\deg_j x$ whereas $\deg_P x$ is the sum over only the non-negative 
terms $\deg_j x$, so $\deg x \leq \deg_P f$.  (Further, $\deg x$ depends on the chosen set of infinite places whereas $\deg_P f$ does not.)  When $m = 1$ 
we have $\deg_P = \deg$ and indeed Corollary \ref{6.3} and Theorem \ref{6.4} 
coincide.  For $m > 1$ the constant in Theorem \ref{6.4} is smaller than that 
of Corollary \ref{6.3}, but because the norms are different the results do not 
appear to be directly comparable.  However, Pfister himself showed that a variant of Theorem \ref{6.4} follows easily from the common special case $m = 1$ by a short argument involving the Riemann-Roch Theorem.  Thus the following 
result is also a corollary of our Nullstellensatz.

\begin{cor}(Pfister \cite[p. 230]{Pfister97})
\label{6.5}
With hypotheses as in Corollary \ref{6.3}, $f$ admits an isotropic vector $v$ with 
\[ \deg_P v \leq \frac{3n-1}{2}(\min_j d_j + g - 1) + \frac{n-1}{2} \deg_P f. \]
\end{cor}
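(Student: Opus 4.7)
The plan is to deduce Corollary \ref{6.5} from the $m=1$ special case of Corollary \ref{6.3} by a Riemann--Roch scaling trick. Let $j_0$ be an index realizing $d_{j_0} = \min_j d_j$, and set $R_{j_0} = k[C \setminus \{\infty_{j_0}\}]$. Since allowing fewer poles is a stronger regularity condition, $R_{j_0} \subseteq R$, and $R_{j_0}$ is an affine domain with a \emph{single} infinite place, so the $m=1$ case of Corollary \ref{6.3} applies to it.

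First I would use Riemann--Roch to produce $\lambda \in K^\times$ whose polar divisor is supported only at $\infty_{j_0}$ and which has enough vanishing at each $\infty_k$ with $k \neq j_0$ to clear the poles of every coefficient of $f$ there; i.e., $v_{\infty_k}(\lambda) \geq N_k := \max_{ij} \{0, -v_{\infty_k}(m_{ij})\}$. This amounts to taking $\lambda \in \mathcal{L}\bigl(N\,\infty_{j_0} - \sum_{k \neq j_0} N_k\,\infty_k\bigr)$ for $N$ minimal making this space nontrivial. With such a $\lambda$, the form $\tilde{f} := \lambda f$ lies in $R_{j_0}[t_1,\ldots,t_n]$ and is isotropic iff $f$ is.

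Next I would apply Corollary \ref{6.3} in its $m=1$ form to $\tilde f$ over $R_{j_0}$, obtaining an isotropic vector $v \in R_{j_0}^n \subseteq R^n$ (hence isotropic for $f$) with
\[
\deg_P v \;=\; \deg v \;\leq\; (d_{j_0} + g - 1)\,n + \frac{n-1}{2}\,\deg_P \tilde f,
\]
where the first equality holds because $v \in R_{j_0}^n$ has polar support concentrated at $\infty_{j_0}$, making the $R_{j_0}$-norm degree coincide with the polar degree on $C$. Combined with the target Riemann--Roch estimate $\deg_P \tilde f \leq \deg_P f + (d_{j_0} + g - 1)$, this yields
\[
\deg_P v \;\leq\; (d_{j_0}+g-1)\,n + \frac{n-1}{2}\bigl[\deg_P f + (d_{j_0}+g-1)\bigr] \;=\; \frac{3n-1}{2}(d_{j_0}+g-1) + \frac{n-1}{2}\deg_P f,
\]
which is the claimed bound.

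The main obstacle is precisely the Riemann--Roch estimate $\deg_P \tilde f \leq \deg_P f + (d_{j_0}+g-1)$. A naive application would bound $\deg_P \lambda$ in terms of $\sum_{k \neq j_0} N_k d_k$, which in the worst case is as large as $(m-1) \deg_P f$ and hence would introduce an unwanted linear dependence on $m$. Pfister's insight, which must be reproduced here, is that by a sharper use of Riemann--Roch --- exploiting the flexibility in the linear equivalence class of the polar divisor of the coefficient matrix, and absorbing the $N_k$'s into a single degree-$d_{j_0}$ contribution at $\infty_{j_0}$ up to a genus-theoretic surcharge --- the displacement cost collapses to the single term $d_{j_0} + g - 1$. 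This tight Riemann--Roch accounting is the geometric heart of the argument.
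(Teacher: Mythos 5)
The paper records Corollary \ref{6.5} without proof: it cites \cite{Pfister97} and states only that the result ``follows easily from the common special case $m=1$ by a short argument involving the Riemann-Roch Theorem.'' So you are reconstructing an argument the paper leaves implicit, which is a reasonable thing to attempt, but the burden is then on you to supply the Riemann-Roch step rather than describe it.

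Your framework and arithmetic are correct as far as they go: taking $j_0$ with $d_{j_0}=\min_j d_j$, forming $R_{j_0}=k[C\setminus\{\infty_{j_0}\}]\subset R$, choosing $\lambda$ with $\tilde f := \lambda f \in R_{j_0}[t_1,\ldots,t_n]$, and applying the $m=1$ case of Corollary \ref{6.3} over $R_{j_0}$ does reduce Corollary \ref{6.5} to the single inequality $\deg_P \tilde f \leq \deg_P f + (d_{j_0}+g-1)$. You correctly flag this as ``the main obstacle.'' But what you write for it is not a proof: the closing paragraph asserts that Pfister has an insight which ``must be reproduced here'' and calls it ``the geometric heart of the argument.'' Identifying the crux and then deferring to an unspecified idea of another author leaves a genuine gap, at exactly the place where the content lies.

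The gap is not merely an omitted computation, because the displayed estimate is actually false for the scaling $\tilde f = \lambda f$ you describe. Take $C = \PP^1$, $g=0$, $m=3$, $\infty_1=0$, $\infty_2=1$, $\infty_3=\infty$, all of degree $1$, $j_0=1$, and the diagonal coefficients $m_{11}=1$, $m_{22}=(t-1)^{-N}$, $m_{33}=t^N$, so that $\deg_P f = N$. Any nonzero $\lambda$ regular away from $\infty_{j_0}=0$ with $v_1(\lambda)\geq N$ and $v_\infty(\lambda)\geq N$ (the minimum required for $\lambda m_{22},\lambda m_{33}\in R_{j_0}$) has zero divisor of degree $\geq 2N$, hence by $\deg\div\lambda=0$ a polar divisor of degree $\deg_P\lambda\geq 2N$; since $m_{11}=1$ this forces $\deg_P\tilde f \geq \deg_P(\lambda m_{11}) = \deg_P\lambda \geq 2N$, whereas $\deg_P f+(d_{j_0}+g-1)=N$. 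In general the pole order of $\lambda$ at $\infty_{j_0}$ is bounded below by $\bigl(\sum_{k\neq j_0}N_k d_k\bigr)/d_{j_0}$ by the degree formula alone, independent of Riemann--Roch, and the sum can be as large as $(m-1)\deg_P f$; this is the unwanted $m$-dependence you noted, and it is not a lossy upper estimate but a hard lower bound. Pfister's short argument must therefore do something other than clear all the poles of $f$ by a single $\lambda$ supported at $\infty_{j_0}$; until that step is located in \cite{Pfister97} and reproduced, the proof is incomplete.
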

\noindent
Thus we recover Pfister's Theorem \ref{6.4} up to a different value of 
the constant $Q(R,n)$.  In fact, as Pfister himself remarks, the constant 
given in Corollary \ref{6.5} is sometimes worse and sometimes better than 
that of Theorem \ref{6.4}.

\section{Quadratic Forms: The Small Multiple Theorem}
\noindent
An ideal $I$ in a ring $R$ is \textbf{odd} if it is coprime to $2R$.  An element 
$x$ of $R$ is odd if the principal ideal $(x)$ is odd.

\begin{thm}
\label{MAGICLATTICETHM}
Let $R$ be a Dedekind domain with fraction field $K$, let $q(x) = q(x_1,\ldots,x_n)$ be a nondegenerate quadratic form over $R$, and let $I$ be an odd ideal of $R$ which is coprime to $\disc q$.  We further assume: \\
(H) The base change of $q$ to $R/I$ is hyperbolic, i.e., isomorphic to $\bigoplus_{i=1}^{\frac{n}{2}} \mathbb{H}$. Then: \\
a) There is an $R$-sublattice $\Lambda_I \subset R^n$ such that:  \\
(i) We have $R^n/\Lambda_I \cong (R/I)^{\frac{n}{2}}$ and thus $\chi(\Lambda_I) = I^{\frac{n}{2}}$.  \\
(ii) We have $q(v) \equiv 0 \pmod I$ for all $v \in \Lambda_I$.  \\
b) The $R$-module $\Lambda_I$ is free iff $I^{\frac{n}{2}}$ is principal. \\
c) Each of the following conditions implies (H): \\
(H1) $n = 2$ and $-d(q)$ is a square in $R/I$.  \\
(H2) Every residue field of $R/I$ has $u$-invariant at most $2$ (e.g. this holds when every residue field is finite), $n$ is even and $(-1)^{\frac{n}{2}} d(q)$ 
is a square in $R$.   
\end{thm}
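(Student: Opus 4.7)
The plan is to realize $\Lambda_I$ as the preimage under reduction mod $I$ of a Lagrangian for the hyperbolic form on $(R/I)^n$. Let $\pi \colon R^n \twoheadrightarrow (R/I)^n$ denote reduction and let $\bar q$ be the induced quadratic form. By hypothesis (H) we have an isometry $\bar q \cong \bigoplus_{i=1}^{n/2} \HH$, and I take $M \subset (R/I)^n$ to be the totally isotropic free direct summand of rank $n/2$ spanned by the ``first coordinates'' in each hyperbolic pair. Set $\Lambda_I := \pi^{-1}(M)$. Then (ii) is immediate: for $v \in \Lambda_I$ we have $\pi(v) \in M$, so $q(v) \equiv \bar q(\pi(v)) = 0 \pmod I$. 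For (i), $R^n/\Lambda_I$ is canonically $(R/I)^n/M \cong (R/I)^{n/2}$; decomposing $R/I \cong \prod_j R/\pp_j^{e_j}$ by the Chinese Remainder Theorem and using $\chi(R/\pp^e) = \pp^e$ together with multiplicativity of $\chi$ on direct sums, one reads off $\chi(\Lambda_I) = \chi((R/I)^{n/2}) = I^{n/2}$.

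Part (b) is then a formal consequence of the standard Steinitz/determinant identity over a Dedekind domain: applied to the exact sequence $0 \to \Lambda_I \to R^n \to (R/I)^{n/2} \to 0$, one has $[\det \Lambda_I] = [\chi(\Lambda_I)] = [I^{n/2}]$ in $\Pic R$, as is checked by localization (where it reduces to a determinant identity in a DVR). Writing $\Lambda_I \cong R^{n-1} \oplus J$ with $[J] = [\det \Lambda_I]$ identifies freeness of $\Lambda_I$ with principality of $I^{n/2}$.

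For part (c)(H1), $q$ is binary and $2 \in (R/I)^\times$, so the usual quadratic-formula factorization shows $q$ is isotropic over $R/I$ iff its discriminant is a square, and a nondegenerate isotropic binary form is hyperbolic; the hypothesis on $-d(q)$ gives exactly this. For (H2), I first use CRT to reduce to the case $R/I = R/\pp^e$ is a local Artinian ring with residue field $k$ of $u$-invariant at most $2$. Over $k$ every nondegenerate form of dimension $\geq 3$ is isotropic, so Witt decomposition writes $\bar q \otimes k \cong \HH^a \oplus q_0$ with $q_0$ anisotropic of dimension $0$ or $2$. If $\dim q_0 = 2$, comparing the discriminant of $\HH^a \oplus q_0$ against the hypothesis that $(-1)^{n/2} d(q)$ is a square forces $-d(q_0)$ to be a square, making $q_0$ itself isotropic --- a contradiction. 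Hence $\bar q \otimes k$ is hyperbolic, and one lifts from $k$ to $R/\pp^e$ via the standard Hensel argument: an isotropic vector of $\bar q \otimes k$ lifts (using nondegeneracy, i.e., coprimality of $I$ with $\disc q$, and the fact that $2$ is a unit since $I$ is odd) to an isotropic vector of $\bar q$; Witt cancellation peels off a hyperbolic plane, and induction on $n/2$ yields the hyperbolic splitting. CRT then reassembles the local splittings into one over $R/I$.

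The main obstacle is precisely this lifting in (H2) from the residue field to $R/\pp^e$. It leans essentially on both structural hypotheses: coprimality of $I$ with $\disc q$ (so that reductions remain nondegenerate and Hensel's lemma applies to isotropic vectors), and oddness of $I$ (so that $2$ is invertible and Witt cancellation is available). Once these two inputs are in place, the remainder of the theorem is a direct consequence of the structure theory of modules over Dedekind domains.
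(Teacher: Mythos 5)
Your proof is correct, and for part (a) it is both more direct and better organized than the paper's. You observe that hypothesis (H) already \emph{gives} a free rank-$n/2$ totally isotropic direct summand $M \subset (R/I)^n$, so $\Lambda_I := \pi^{-1}(M)$ can be constructed immediately without any local analysis; all the Hensel/completion machinery is then isolated inside the proof of (c)(H2), which is where it logically belongs. The paper's proof of (a) instead reduces to $I=\pp^e$ by CRT, passes to the completion $R_\pp$, lifts isotropy by Hensel's Lemma, and invokes a compressed Pfister-form step to conclude hyperbolicity of $\hat q$ over $K_\pp$, then over $R_\pp$ via the nonsingular-lifting theorem (Scharlau 1.6.13) — in effect re-deriving (H) rather than using it. Your route is cleaner: given (H), the reduction to prime powers is unnecessary for (a). Moreover, the paper stops after (b) and does not supply an argument for (c), so your discriminant computation for (H2) — $d(\HH^a \oplus q_0) = (-1)^a d(q_0)$, so $(-1)^{n/2}d(q)$ a square forces $-d(q_0)$ to be a square, contradicting anisotropy of $q_0$ — and the Hensel lift from $k$ to $R/\pp^e$ fill a genuine gap in the paper. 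The treatment of (b) via the Steinitz class $[\chi(\Lambda_I)] = [I^{n/2}]$ coincides with the paper's intent.
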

\begin{proof} 
a) Step 1: We suppose $I = \pp^e$ is an odd prime power.  Then $k := R/\pp$ 
is a field of characteristic different from $2$.  Let $R_{\pp}$ be the 
completion of $R$ at $\pp$; then $R_{\pp}$ is a nondyadic CDVR with fraction field $K_{\pp}$, and since $m$ is prime to $\Disc q$, the base change $\hat{q}$ of $q$ to $R_{\pp}$ is \textbf{nonsingular}.  Since the reduction of $\hat{q}$ modulo $\pp$ is isotropic, by Hensel's Lemma so is $\hat{q}$.  Thus $\hat{q}_{/K_{\pp}}$ is universal and similar to a Pfister form, hence is itself an isotropic Pfister form.  Every isotropic Pfister form is hyperbolic, so 
$\hat{q}_{K_{\pp}} \cong_{K_{\pp}} \bigoplus_{i=1}^{\frac{n}{2}} \mathbb{H}$.  
Since $\hat{q}$ is nonsingular, it follows that $\hat{q} \cong_{R_{\pp}} \bigoplus_{i=1}^{\frac{n}{2}} \mathbb{H}$ (e.g. \cite[Thm. 1.6.13]{Scharlau}), and thus $q_{/R/(m)} \cong \bigoplus_{i=1}^{\frac{n}{2}} \mathbb{H}$.  If the $i$th copy of the hyperbolic plane is the free $R/I$-module with basis 
$e_i,f_i$, put $M = \langle e_1,\ldots,e_{\frac{n}{2}} \rangle_{R/I}$.  Let $\varphi: 
R^n \ra (R/I)^n$ be the canonical map, and let $\Lambda_I = \varphi^{-1}(M)$.  
Then $\Lambda_I$ is an $R$-submodule of $\Lambda_I$ with finite length quotient, so it is an $R$-lattice in $K^n$.  Clearly $\chi(R^n/\Lambda_I) = 
I^{\frac{n}{2}}$, and by construction, 
$q(v) \equiv 0 \pmod{m}$ for all $v \in \Lambda_I$, so this completes the proof 
of Theorem \ref{MAGICLATTICETHM} in this case.  \\
Step 2: Suppose $I = \pp_1^{e_1} \cdots \pp_r^{e_r}$.  For $1 \leq i \leq r$, put $I_i = \pp_i^{e_i}$.  By Step 1, for $1 \leq i \leq r$ there is a sublattice $\Lambda_i \subset R^n$ such that $\chi(R^n/\Lambda_i) = I_i^{\frac{n}{2}}$ and $q|_{\Lambda_i} \equiv 0 \pmod{I_i}$.  Put $\Lambda_I = \bigcap_{i=1}^r \Lambda_i$.  Then 
$\Lambda_I$ is a sublattice of $R^n$; by the Chinese Remainder Theorem $\chi(R^n/\Lambda_I) = \prod_{i=1}^r \chi(R^n/\Lambda_i) = 
I^{\frac{n}{2}}$ and $q(v) \equiv 0 \pmod{I}$ for all $v \in \Lambda_I$. \\
b) This follows easily from the fact that $R^n/\Lambda_I \cong (R/I)^{\frac{n}{2}}$.  
\end{proof}

\begin{thm}
\label{MAINTHM}
\label{7.2}
Let $(R,|\cdot|)$ be a multimetric linear type normed Dedekind domain, with fraction field $K$.  Let $f = f(t_1,\ldots,t_n) \in R[t_1,\ldots,t_n]$ be an anisotropic quadratic form.  Let $d$ be an odd 
element of $R$ which is coprime to $\disc q$.  We suppose \textbf{hypothesis (H)}: the base change of $q$ to $R/(d)$ is isomorphic to $\bigoplus_{i=1}^{\frac{n}{2}} \mathbb{H}$. \\
a) If $|\cdot|_j$ is Archimedean for at least one $j$, then for any $0 < c < C_M(R,n)$ there is $v \in R^n$ and $k \in R$ such that 
\begin{equation}
\label{MAINEQ2}
q(v) = kd, \ 0 < |k| \leq c^{\frac{-2}{n}} |f|.
\end{equation}
b) If the norm is of $q$-type, there is $v \in R^n$ 
and $k \in R^{\bullet}$ such that 
\[ q(v) = kd, \ \deg k \leq \deg f + \frac{2(mn-1)}{n} - \frac{2c_M(R,n)}{n}. \]
\end{thm}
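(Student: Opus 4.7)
The plan is to extract $v \in R^n$ by combining the Magic Lattice Theorem with the multimetric linear type property, and then convert a coordinate bound on $v$ into the desired bound on $k$.  Apply Theorem~\ref{MAGICLATTICETHM} with the odd ideal $I = (d)$, whose hypotheses are exactly those assumed here: we obtain an $R$-sublattice $\Lambda \subset R^n$ with $\chi(\Lambda) = (d)^{n/2}$ and $q(v) \equiv 0 \pmod{d}$ for every $v \in \Lambda$.  Consequently $\Covol \Lambda = |d|^{n/2}$, i.e.\ $\covol \Lambda = \tfrac{n \deg d}{2}$, and because $q$ is anisotropic, any nonzero $v \in \Lambda$ furnishes a nonzero $k \in R$ with $q(v) = kd$, giving $|k||d| = |q(v)|$ in case (a) and $\deg k = \deg q(v) - \deg d$ in case (b).

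The quantitative heart is the pointwise estimate
\[
|q(v)|_j \;\leq\; |f|_j \bigl(\max_i |v_i|_j\bigr)^2 \qquad \text{for each } j,
\]
which follows from the triangle inequality (renormalizing each $|\cdot|_j$ as in $\S 6$ so that Artin constants are at most $2$) together with the definitions of $|f|_j$ in the Archimedean and $q$-type cases.  Taking products over $j$ yields $|q(v)| \leq |f| \prod_j (\max_i |v_i|_j)^2$, so the task reduces to choosing the target bounds $\epsilon_{ij}$ for multimetric linear congruential type that minimize $\prod_j \max_i |v_i|_j$ subject to $\Covol \Lambda \leq C_M(R,n) \prod_{i,j} \epsilon_{ij}$.

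For part (a), I take $\epsilon_{ij} = \beta_j$ for all $i$, setting $\beta_j = 1$ at any $q$-type place and choosing the Archimedean $\beta_j$ so that $\prod_j \beta_j$ slightly exceeds $c^{-1/n}|d|^{1/2}$; this is possible because each Archimedean $\overline{\mathcal{N}_j} = \R^{>0}$ and at least one such factor exists by hypothesis.  Multimetric linear type then produces $v \in \Lambda^{\bullet}$ with $|v_i|_j \leq \beta_j$, so $|k||d| \leq |f|\prod_j \beta_j^2$, which in the limit gives $|k| \leq c^{-2/n} |f|$.  For part (b), I set $e_{ij} = e^{\ast}$ for all $i,j$ with $e^{\ast} = \lceil \tfrac{\deg d}{2m} - \tfrac{c_M(R,n)}{mn} \rceil \in \Z$, so that $\sum_{i,j} e_{ij} = mn\, e^{\ast} \geq \tfrac{n \deg d}{2} - c_M(R,n)$ meets the congruential hypothesis.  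The resulting $v \in \Lambda^{\bullet}$ satisfies $\deg q(v) \leq \deg f + 2m e^{\ast}$.  Since hypothesis (H) forces $n$ to be even, the rational $\tfrac{n \deg d - 2 c_M(R,n)}{2mn}$ lies in $\tfrac{1}{mn}\Z$, hence $e^{\ast} \leq \tfrac{\deg d}{2m} - \tfrac{c_M(R,n)}{mn} + \tfrac{mn-1}{mn}$; substituting into $\deg k = \deg q(v) - \deg d$ yields the advertised bound.

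The main obstacle is bookkeeping: in (a) one must respect the dense versus discrete structure of each $\overline{\mathcal{N}_j}$ while distributing the $\beta_j$, and in (b) the sharp constant $\tfrac{2(mn-1)}{n}$ (rather than the naive $\tfrac{2mn-1}{n}$) hinges on the observation that hypothesis (H) forces $n$ to be even, so that $n \deg d - 2 c_M(R,n)$ is even and the ceiling estimate loses only $\tfrac{mn-1}{mn}$ in place of $\tfrac{2mn-1}{2mn}$.
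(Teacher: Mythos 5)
Your proposal follows the same skeleton as the paper's proof: apply the Magic Lattice Theorem to produce the sublattice $\Lambda_d$ of covolume $|d|^{n/2}$ consisting of vectors with $q(v) \equiv 0 \pmod d$, feed it into multimetric linear (congruential) type with a balanced choice of the $\epsilon_{ij}$, and convert the coordinate bounds into a bound on $k$ via the pointwise estimate $|q(v)|_j \leq |f|_j\,(\max_i |v_i|_j)^2$.  Your choice of $e^{\ast}$ in part (b) is identical to the paper's $e$, and your observation that hypothesis (H) forces $n$ to be even (hence that $\tfrac{n}{2}\deg d - c_M(R,n) \in \Z$, so $\lceil x\rceil \leq x + \tfrac{mn-1}{mn}$ applies) is exactly the justification the paper silently relies on for the sharp constant.

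One wrinkle in part (a): you choose $\prod_j \beta_j$ to \emph{slightly exceed} $c^{-1/n}|d|^{1/2}$ and then pass to a limit.  As stated this does not close: for each $\delta > 0$ you get a (possibly different) vector $v^{(\delta)}$ with $|k^{(\delta)}| \leq c^{-2/n}|f| \cdot (1+O(\delta))$, and without any discreteness hypothesis on $|R|$ (which Theorem~\ref{7.2}, unlike Theorem~\ref{6.1}, does not assume) you cannot extract a single $v$ achieving the limiting bound.  But the limit is unnecessary: the congruential hypothesis in the definition of multinormed linear type is a non-strict inequality $\Covol\Lambda \leq C\prod\epsilon_{ij}$, so one may take the Archimedean $\beta_j$ with $\prod_j \beta_j$ exactly equal to $c^{-1/n}|d|^{1/2}$ (each Archimedean $\overline{\mathcal{N}_j}=\R^{>0}$, so this value is attainable).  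That is what the paper does, with $\epsilon = c^{-1/(m'n)}|d|^{1/(2m')}$ at each of the $m'$ Archimedean places; the covolume constraint is then met with equality and the bound $|k| \leq c^{-2/n}|f|$ follows in one step.
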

\begin{proof} By Theorem \ref{MAGICLATTICETHM} there is an integral lattice 
$\Lambda_d$ with $\Covol \Lambda_d = |d|^{\frac{n}{2}}$ and such that $q(v) \equiv 0 \pmod{d}$ for all $v \in \Lambda_d$.  \\
a) Suppose that $|\cdot|_j$ is Archimedean for $1 \leq j \leq m'$ and ultrametric for $m' < j \leq m$.  For all $1 \leq i \leq n, \ 1 \leq j \leq m'$, take \[\epsilon_{i,j} = 
\epsilon = c^{\frac{-1}{m'n}} |d|^{\frac{1}{2m'}}. \]  
For all $1 \leq i \leq n, \ m' < j \leq m$, take 
\[ \epsilon_{i,j} = 1. \]
Then $\Covol \Lambda_d \leq c \prod_{i,j} \epsilon_{i,j}$, so there is $v = (v_1,\ldots,v_n) \in \Lambda_d^{\bullet}$ with $|v_i|_j \leq c^{\frac{-1}{m'n}} |d|^{\frac{1}{2m'}}$ 
for all $1 \leq i \leq n$, $1 \leq j \leq m'$ and $|v_i|_j \leq 1$ for all 
$1 \leq i \leq n$, $m' < j \leq m$.  Then 
\[ |f(v)| = |\sum_{i,k} a_{ik} v_i v_k| = \prod_{j=1}^m |\sum_{i,k} a_{ik} v_i v_k|_j \leq \prod_{j=1}^{m'} \sum_{i,k} |a_{ik} v_i v_k|_j \prod_{j=m'+1}^m \max_{ik} |a_{ik} v_i v_k|_j\] 
\[ \leq \epsilon^{2m'} \prod_{j=1}^{m'} \sum_{i,k} |a_{ik}|_j \prod_{j=m'+1}^m 
\max_{ik} |a_{ik}|_j= 
\epsilon^{2m'} |f| = c^{\frac{-2}{n}} |d| |f|. \]
Writing $f(v) = kd$ and using $|f(v)| = |k| |d|$, we get 
\[ |k| \leq c^{\frac{-2}{n}} |f|. \] 
b) For all $1 \leq i \leq n, \ 1 \leq j \leq m$ we take 
\[e_{ij} = e =  \lceil \frac{ \frac{n}{2} \deg d - c_M(R,n)}{mn} \rceil. \]
Then $\covol \Lambda_d = \frac{n}{2} \deg d \leq c_M(R,n) + \sum_{i,j} e_{ij}$, 
so there is $v = (v_1,\ldots,v_n) \in \Lambda_d^{\bullet}$ such that 
\[ \forall 1 \leq i \leq n, \ 1 \leq j \leq m, \ \deg_j v_i \leq \lceil \frac{ \frac{n}{2} \deg d - c_M(R,n)}{mn} \rceil  \] \[ \leq \frac{ \frac{n}{2} \deg d - c_M(R,n)}{mn} + \frac{mn-1}{mn}. \]
Then 
\[ \deg f(v) = \sum_{j=1}^m \deg_j f(v) = \sum_{j=1}^m \deg_j \sum_{ik} a_{ik} v_i v_k \]
\[ \leq \sum_{j=1}^m \max_{ik} (\deg_j a_{ik} + \deg_j v_i + \deg_j v_k) 
\leq \sum_{j=1}^m \max_{ik} (\deg_j a_{ik} + 2e) \] \[ =  \deg f + 2me 
\leq  \deg f + \frac{2(mn-1)}{n} - \frac{2c_M(R,n)}{n} + \deg d. \]
Writing $f(v) = kd$ for $k \neq 0$ and using $\deg f(v) = \deg k + \deg d$, we get 
\[ \deg k \leq  \deg f + \frac{2(mn-1)}{n} - \frac{2c_M(R,n)}{n}. \]
\end{proof}

\begin{thm}
\label{7.3}
Let $f = \sum_{i,j} a_{ij} t_i t_j \in \Z[t_1,\ldots,t_n]$ be an anisotropic quadratic form.  Let $d \in \Z^{\bullet}$ be an odd integer coprime to $\disc f$ and such that the base change of $f$ to $\Z/d\Z$ is hyperbolic.  Then there is $v \in \Z^n$ such that 
\begin{equation}
\label{MAINEQ3}
 f(v) = kd, \ 0 < |k| < |f|. 
\end{equation}
\end{thm}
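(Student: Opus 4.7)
The plan is to mimic the proof of Theorem 7.2(a) for $R = \Z$, but to replace the use of the bare inequality $C(\Z,n) \leq 1$ by the strengthened Theorem 3.3, which, at the cost of loosening one coordinate, gives strict inequalities at the other $n-1$ coordinates. This refinement, together with the nondegeneracy of $f$, will upgrade $|k| \leq |f|$ to $|k| < |f|$.

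First I would invoke Theorem 7.1 (the Magic Lattice Theorem) using the hypotheses that $d$ is odd, coprime to $\disc f$, and that $f$ becomes hyperbolic over $\Z/d\Z$, to produce an integral sublattice $\Lambda_d \subseteq \Z^n$ with $\Covol \Lambda_d = [\Z^n : \Lambda_d] = |d|^{n/2}$ and $f(v) \equiv 0 \pmod d$ for every $v \in \Lambda_d$. Setting $\epsilon_i = |d|^{1/2}$ for all $i$, we have $\prod_i \epsilon_i = |d|^{n/2} = \Covol \Lambda_d$, so for any chosen index $i_{\bullet}$, Theorem 3.3 supplies a nonzero $v = (v_1,\ldots,v_n) \in \Lambda_d$ with $|v_i| < |d|^{1/2}$ for $i \neq i_{\bullet}$ and $|v_{i_{\bullet}}| \leq |d|^{1/2}$.

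The key step is then choosing $i_{\bullet}$ to exploit nondegeneracy. Since $d$ is coprime to $\disc f$, $f$ is nondegenerate, and hypothesis (H) forces $n$ to be even, so $n \geq 2$. I claim that one can pick $i_{\bullet}$ so that the upper triangular coefficient matrix $(a_{ik})$ of $f$ has a nonzero entry at some position $(i,k) \neq (i_{\bullet}, i_{\bullet})$: if $f$ has any nonzero off-diagonal entry $a_{ij}$ (with $i<j$), then $(i,j) \neq (i_{\bullet}, i_{\bullet})$ for any $i_{\bullet}$; if $f$ is diagonal, then all $n$ diagonal coefficients are nonzero by nondegeneracy, so any $i_{\bullet}$ admits another diagonal position with nonzero coefficient. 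Applying the triangle inequality,
\[ |f(v)| \leq \sum_{i \leq k} |a_{ik}| \, |v_i v_k|, \]
and noting that $|v_i v_k| \leq |d|$ with strict inequality \emph{except} possibly at $(i_{\bullet}, i_{\bullet})$, the chosen $i_{\bullet}$ forces at least one term in which $|v_i v_k| < |d|$ has $a_{ik} \neq 0$. Hence $|f(v)| < \sum_{i \leq k} |a_{ik}| \cdot |d| = |f| \cdot |d|$.

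Finally, $f$ anisotropic and $v \neq 0$ force $f(v) \neq 0$; writing $f(v) = kd$ and comparing absolute values yields $0 < |k| < |f|$. The only real obstacle is the passage from $\leq$ to $<$. This requires two ingredients unique to the integer setting: the arithmetic refinement of Theorem 3.3 (available because $|\Z|$ is $\N$-valued, so one has the slack $\lfloor \epsilon_{i_{\bullet}} \rfloor + 1 > \epsilon_{i_{\bullet}}$ at one coordinate), and the algebraic fact that nondegeneracy of $f$ allows us to position the single ``soft'' coordinate $i_{\bullet}$ so as not to exhaust all of $|f|$.
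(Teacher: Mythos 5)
Your proof is correct and follows the same route as the paper's: the paper's (very terse) proof likewise says to apply the sharpened Theorem~\ref{ZLINTYPE2} in place of the bare bound $C(\Z,n)=1$ to extract a strict inequality. What you supply that the paper's one-sentence argument leaves implicit is exactly the right observation, namely that hypothesis (H) forces $n\geq 2$ and nondegeneracy then lets one position the single ``soft'' coordinate $i_\bullet$ so that some nonzero coefficient $a_{ik}$ with $(i,k)\neq(i_\bullet,i_\bullet)$ picks up the strict inequality $|v_iv_k|<|d|$.
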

\begin{proof} 
Using $C(\Z,n) = 1$ for all $n$ and a simple limiting argument we get 
(\ref{MAINEQ3}) with ``$\leq$''.  Using instead the sharper Theorem \ref{ZLINTYPE2} one extracts a strict inequality.  
\end{proof}
\noindent
We refer the interested reader to \cite{Mordell66} for examples of the use of Theorem \ref{7.3} to prove representation theorems for binary integral quadratic forms.  Here we will content ourselves with one striking example. 
\\ \\
Example (Brauer-Reynolds \cite{Brauer-Reynolds}): Let $f = x^2 + y^2 + z^2 + w^2$ over $\Z$.  For every 
odd positive integer $d$, there is $v \in \Z^4$ with $f(v) = kd$, $0 < k < 4$.  
We can deduce Lagrange's Theorem that $q$ represents all 
positive integers.  Indeed, since $f(\Z^4)$ is closed under multiplication 
and certainly contains $1$ and $2$, it suffices to show that $f$ represents every odd prime $p$.  We know that there is $v$ with 

\begin{equation}
\label{LAGRANGE1}
 x^2 + y^2 + z^2 + w^2 = p
\end{equation}
or 
\begin{equation}
\label{LAGRANGE2}
x^2+ y^2 + z^2 + w^2 = 2p 
\end{equation}
or 
\begin{equation}
\label{LAGRANGE3}
x^2 + y^2 + z^2 + w^2 = 3p. 
\end{equation}
If (\ref{LAGRANGE1}) holds, we're done.  If (\ref{LAGRANGE2}) holds, 
parity considerations show: after reordering the variables we may 
assume $x \equiv y \pmod{2}$ and $z \equiv w \pmod{2}$ and then 
\[ p = \left( \frac{x+y}{2} \right)^2 + \left( \frac{x-y}{2} \right)^2 + 
\left( \frac{z+w}{2} \right)^2 + \left( \frac{z-w}{2} \right)^2. \]
If (\ref{LAGRANGE3}) holds, then one of $x,y,z,w$ -- say $x$ -- 
must be divisible by $3$; replacing $y$, $z$, $w$ with their negatives 
if necessary we may assume $y \equiv z \equiv w \pmod{3}$, and then 
\[ p = \left( \frac{y+z+w}{3} \right)^2 + \left( \frac{x+z-w}{3} \right)^2 
+ \left( \frac{x-y+w}{3} \right)^2 + \left( \frac{x+y-z}{3} \right)^2. \]


\begin{thm}
\label{7.5}
Let $k$ be a field and $C/k$ be a nice projective curve of genus $g$, and let $C^{\circ} = C \setminus \{\infty_1,\ldots,\infty_m\}$ be the affine curve obtained by removing the given $m$ closed points, of degrees $d_1,\ldots,d_m$.   Let $R = k[C^{\circ}]$ be its affine coordinate ring.  Let $f \in R[t_1,\ldots,t_n]$ be an anisotropic quadratic form.  Let $d \in R^{\bullet}$ 
be odd and coprime to $\disc f$ and such that the base change of $f$ to 
$R/dR$ is hyperbolic.  Then there is $v \in R^n$ such that 
\begin{equation}
f(v) = kd, \ 0 \leq \deg k \leq \deg f + 2(\sum_{j=1}^m d_j + g - 1).
\end{equation}
\end{thm}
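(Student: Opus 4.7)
The plan is to obtain Theorem \ref{7.5} as a direct specialization of the general Small Multiple Theorem (Theorem \ref{MAINTHM}) to the affine domain setting, using the lower bound on the multinormed linear constants $c_M(R,n)$ furnished by Theorem \ref{4.7}.

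First, I verify that the hypotheses of Theorem \ref{MAINTHM}(b) are in force. By Theorem \ref{4.7}, the affine coordinate ring $R = k[C^{\circ}]$, endowed with its canonical $q$-norm $|I| = q^{\dim_k R/I}$ from $\S 1.8$, is a multimetric $q$-normed Dedekind domain of multimetric linear type, with
\[ c_M(R,n) \;\geq\; n(m+1 - \sum_{j=1}^m d_j - g) - 1. \]
The data $f$ anisotropic, $d \in R^{\bullet}$ odd and coprime to $\disc f$ with $f$ hyperbolic over $R/dR$ match precisely the hypothesis (H) required by Theorem \ref{MAGICLATTICETHM}, so Theorem \ref{MAINTHM}(b) applies verbatim.

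Second, I apply Theorem \ref{MAINTHM}(b) to produce $v \in R^n$ and $k \in R^{\bullet}$ with $f(v) = kd$ and
\[ \deg k \;\leq\; \deg f + \frac{2(mn-1)}{n} - \frac{2 c_M(R,n)}{n}. \]
Substituting the bound from Theorem \ref{4.7} and simplifying gives
\[ \deg k \;\leq\; \deg f + \frac{2(mn-1)}{n} - 2\left(m+1 - \sum_{j=1}^m d_j - g\right) + \frac{2}{n} = \deg f + 2\left(\sum_{j=1}^m d_j + g - 1\right), \]
which is the desired upper bound. The lower bound $\deg k \geq 0$ is automatic: by axiom (N$_q$1), every nonzero element of $R$ has nonnegative degree, and $k \neq 0$ because $f$ is anisotropic and $v \neq 0$ forces $f(v) \neq 0$.

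There is no genuine obstacle here: the entire content of the statement has been packaged into the two prior results, and the argument amounts to chaining them and performing the elementary arithmetic above. The only thing one needs to confirm is that the arithmetic does indeed reduce to the clean expression $2(\sum_j d_j + g - 1)$, which it does since the $\tfrac{2}{n}$ and $\tfrac{2mn}{n} = 2m$ terms cancel neatly against the $-2m-2$ coming from the lower bound on $c_M(R,n)$.
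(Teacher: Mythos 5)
Your proposal is correct and follows exactly the route the paper takes: the paper's proof consists of nothing more than the line ``Apply Theorems \ref{4.7} and \ref{7.2},'' and you have carried out precisely that chaining, with the arithmetic simplification spelled out correctly (the $\frac{2}{n}$ terms cancel and $2m$ cancels, leaving $\deg f + 2(\sum_j d_j + g - 1)$).
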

\begin{proof} 
Apply Theorems \ref{4.7} and \ref{7.2}.
\end{proof}

\end{document}